
\documentclass[a4paper]{article}				
\usepackage[latin1]{inputenc}					
\usepackage[T1]{fontenc}						
\usepackage{lmodern}						
\usepackage[english]{babel}					
\usepackage{amsmath,amsfonts}				
\usepackage{amssymb}						
\usepackage{stmaryrd}                      				 
\usepackage{multirow}						
\usepackage{graphicx}
\usepackage{bm}							
\usepackage{amsthm}
\usepackage{mathdots}
\usepackage{algorithm2e}
\usepackage{mathtools}
\usepackage{hyperref}
\usepackage{verbatim}
\usepackage{tikz}
\usetikzlibrary{arrows}
\usepackage[metapost]{mfpic}
\usepackage{url}
\usepackage{bigstrut}
\usepackage{moreverb}
\usepackage[metapost]{mfpic}
\usepackage{tikz}


\usepackage{a4wide} 						

\newtheorem{thm}{Theorem}
\numberwithin{thm}{section}
\newtheorem{lem}[thm]{Lemma}
\newtheorem{lemma}[thm]{Lemma}
\newtheorem{prop}[thm]{Proposition}
\newtheorem{conj}[thm]{Conjecture}
\newtheorem{coro}[thm]{Corollary}
\newtheorem{defi}[thm]{Definition}
\newtheorem{defn}[thm]{Definition}
\newtheorem{qst}[thm]{Question}

\theoremstyle{definition}						
\newtheorem{rem}[thm]{Remark}

\newtheorem{example}[thm]{Example}

\numberwithin{equation}{section}

\newcommand{\R}{\mathbb{R}}
\newcommand{\bbr}{\mathbb{R}}

\newcommand{\N}{\mathbb{N}}
\newcommand{\bbn}{\mathbb{N}}
\newcommand{\Z}{\mathbb{Z}}
\newcommand{\bbz}{\mathbb{Z}}

\newcommand{\F}{\mathbb{F}}
\newcommand{\bbf}{\mathbb{F}}
\newcommand{\K}{\mathbb{K}}
\newcommand{\bbk}{\mathbb{K}}

\newcommand{\bone}{\mathbf{1}}

\newcommand{\bn}{\mathbf{n}}
\newcommand{\br}{\mathbf{r}}
\newcommand{\bk}{\mathbf{k}}
\newcommand{\bmf}{\mathbf{m}}
\newcommand{\btwo}{\mathbf{2}}
\newcommand{\bMf}{\mathbf{M}}
\newcommand{\bs}{\mathbf{s}}
\newcommand{\bj}{\mathbf{j}}
\newcommand{\bl}{\mathbf{l}}
\newcommand{\be}{\mathbf{e}}
\newcommand{\bo}{\mathbf{o}}

\newcommand{\sep}{\;:\;}

\newcommand{\RR}{\K\left(\left(t^{-1}\right)\right)}

\DeclarePairedDelimiter\ceil{\lceil}{\rceil}


\makeatletter

\renewcommand*{\eqref}[1]{%
  \hyperref[{#1}]{\textup{\tagform@{\ref*{#1}}}}%
}

\def\imod#1{\allowbreak\mkern10mu({\operator@font mod}\,\,#1)}
\makeatother

\title{On the $t$--adic Littlewood Conjecture}
\author{Faustin Adiceam, Erez Nesharim \& Fred Lunnon} 
\date{}

\begin{document}
\maketitle

\begin{abstract}
The $p$--adic Littlewood Conjecture due to De Mathan and Teuli\'e asserts that for any prime number $p$ and any real number $\alpha$, the equation $$\inf_{|m|\ge 1} |m|\cdot |m|_p\cdot |\langle m\alpha \rangle|\, =\, 0 $$ holds. Here, $|m|$ is the usual absolute value of the integer $m$, $|m|_p$ its $p$--adic absolute value and $ |\langle x\rangle|$ denotes the distance from a real number $x$ to the set of integers. This still open conjecture stands as a variant of the well--known Littlewood Conjecture. In the same way as the latter, it admits a natural counterpart over the field of formal Laurent series $\mathbb{K}\left(\left(t^{-1}\right)\right)$ of a ground field  $\mathbb{K}$. This is the so--called \emph{$t$--adic Littlewood Conjecture} ($t$--LC).

It is known that $t$--LC fails when the ground field $\mathbb{K}$ is infinite. The present article is concerned with the much more difficult case when this field  is finite. More precisely, a \emph{fully explicit} counterexample is provided to show that $t$--LC does not hold in the case that $\mathbb{K}$ is a finite field with characteristic 3. Generalizations to fields with characteristics different from 3 are also discussed.

The proof is computer assisted. It reduces to showing that an infinite matrix encoding Hankel determinants of the Paper--Folding sequence over $\F_3$, the so--called Number Wall of this sequence, can be obtained as a two--dimensional
automatic tiling satisfying a finite number of suitable local constraints.
\end{abstract}

\paragraph{}
\begin{center}
\emph{In honorem Christiani Figuli \footnote{Christian Potier.}.}
\end{center}

\setcounter{tocdepth}{2}
\tableofcontents

\section{Introduction}

Let $x$ be a real number. Denote by $|x|$ its usual absolute value and by $|\langle x \rangle|$ its distance to the set of integers.
The famous \emph{Littlewood Conjecture} from the 1930's states that for any two real numbers $\alpha$ and $\beta$, the following equation holds:
\begin{equation}\label{LC}
\inf_{m\neq 0} |m|\cdot |\langle m\alpha \rangle| \cdot | \langle m\beta \rangle| \, = \, 0,
\end{equation}
where the infimum is taken over all non--zero integers. The best--known result towards this conjecture is due to Einsiedler, Katok and Lindenstrauss~\cite{EKL} who established that the set of possible counterexamples has Hausdorff dimension zero. It is, however, not even known whether the pair of quadratic irrationalities $(\alpha, \beta) = (\sqrt{2}, \sqrt{3})$ satisfies~\eqref{LC}.

\paragraph{}
De Mathan and Teuli\'e~\cite{MT04} 
suggested a variant of the Littlewood Conjecture which has since then been known as the \emph{$p$--adic Littlewood Conjecture}. According to the latter, given a prime number $p$ and a real number $\alpha$,
\begin{equation}\label{eq:p-LC}
\inf_{m\neq 0} |m|\cdot | m|_p \cdot | \langle m\alpha \rangle| \, = \, 0.
\end{equation}
Here, $| m|_p$ stands for the $p$-adic absolute value of the integer $m$. Upon writing $m = p^{\nu_p(m)}n$, where $\nu_p(m)$ denotes the $p$--adic valuation of $m$ and where $n$ is an integer, \eqref{eq:p-LC} amounts to the following relation~:
\begin{equation}\label{p-LC}
\inf_{n\neq 0, k\ge 0} |n|\cdot | \langle np^k\alpha \rangle| \, = \, 0.
\end{equation}
(Note that it is not required in~\eqref{p-LC} that the integer $n$ should not be divisible by the prime $p$. It is easy to see that this does not affect the claimed equivalence between the two formulations of the problem.) Akin to the Littlewood Conjecture, it is known thanks to the work of Einsiedler and Kleinbock~\cite{EK07} that the set of possible exceptions to the $p$--adic Littlewood Conjecture has Hausdorff dimension zero.

\paragraph{} For detailed accounts on  the Littlewood and the $p$--adic Littlewood Conjectures, see~\cite{BBEK15, B14, Q09} and the references therein.

\paragraph{}
Both of these conjectures admit natural counterparts over function fields, which have attracted much attention. In order to state them, some terminology and notation are first introduced.

\paragraph{}
Let $\mathbb{K}$ be a field. Denote by $\mathbb{K}\left[t\right]$ the ring of polynomials with coefficients in $\mathbb{K}$, and by $\mathbb{K}\left(t\right)$ the field of rational functions over $\bbk$. The valuation on $\mathbb{K}\left[t\right]$ given by the degree of a polynomial extends to a valuation on  $\mathbb{K}\left(t\right)$ so as to provide an absolute value given by
\begin{equation}\label{absfctfield}
\left|\Theta\right|\, =\, 2^{\deg \Theta}
\end{equation}
for any  $\Theta\in\mathbb{K}\left(t\right)$.
The completion of the field of rational functions is then the field of formal Laurent series 
denoted by $\mathbb{K}\left(\left(t^{-1}\right)\right)$. Explicitly, an element $\Theta\in \mathbb{K}\left(\left(t^{-1}\right)\right)$ can be uniquely expressed as a power series with at most finitely many non--zero coefficients corresponding to positive powers of $t$; that is, it can be uniquely expressed as 
$$\Theta \, =\, \theta_{-h}t^h+\dots + \theta_{-1}t+\theta_0 + \theta_1 t^{-1}+ \theta_2 t^{-2}+\dots ,$$ 
where $\left( \theta_{-i}\right)_{i\ge -h}$ is a sequence in $\mathbb{K}$ such that $\theta_{-h}\neq 0$. The \emph{degree} of the Laurent series $\Theta$ is then the integer $h$ and its \emph{absolute value} the quantity 
$\left|\Theta\right|\, =\, 2^{h}$.
Furthermore, one defines the \emph{ fractional part} of $\Theta$ as $$\langle \Theta \rangle\, =\,  \theta_1 t^{-1}+ \theta_2 t^{-2}+\dots ;$$ that is, as  $\Theta$ minus its \emph{polynomial part} $\theta_{-h}t^h+\dots + \theta_{-1}t+\theta_0$.

\paragraph{}
With the above notation, the \emph{Littlewood Conjecture over Function Fields} (LCFF), due to Da\-ven\-port and Lewis~\cite{DL63}, can be stated in complete analogy with the real case as follows: for any $\Theta$ and $\Phi$ in $\mathbb{K}\left(\left(t^{-1}\right)\right)$, the equation 
$$\inf_{N\neq 0} \left|N\right|\cdot \left|\langle N \Theta\rangle\right|\cdot \left|\langle N \Phi \rangle \right|\, =\, 0$$ 
holds. Here, the infimum is taken over all non--zero elements in  $\mathbb{K}\left[t\right]$. In the same vein, De Mathan and Teuli\'e~\cite{MT04} enunciated  the \emph{$t$--adic Littlewood Conjecture} ($t$--LC), which is the analogue over function fields of the $p$--adic Littlewood Conjecture: for any  $\Theta$ in $\mathbb{K}\left(\left(t^{-1}\right)\right)$, the equation
\begin{equation}\label{tadiclit}
\inf_{N\neq 0, k\ge 0} \left|N\right|\cdot \left|\langle N t^k \Theta\rangle\right|\, =\, 0
\end{equation}
holds. Note that in this statement the variable $t$ plays the role of the prime number $p$ in the real case, which is justified by the fact that it can be viewed as an irreducible element in the ring $\mathbb{K}\left[t\right]$~(\footnote{In a more general version of the conjecture, one may replace $t$ with any irreducible polynomial in $\mathbb{K}\left[t\right]$.}).

\paragraph{}
In the case of LCFF, Davenport and Lewis~\cite{DL63} established that the set of exceptions is never empty when the ground field $\mathbb{K}$ is infinite. Their work was complemented by that of Baker~\cite{B64} and several other authors~\cite{B67, C67, C71, K91} who provided explicit counterexamples in this case. In the other direction, see~\cite{AB07} for explicit constructions of pairs of power series satisfying LCFF.  Similarly, for $t$--LC, De Mathan and Teuli\'e~\cite{MT04} established that the conjecture fails when 
$\mathbb{K}$ is infinite. Bugeaud and De Mathan~\cite{BM08} later provided explicit counterexamples in this case (they also gave examples of power series satisfying the conjecture in any characteristic).

\paragraph{}
Much less is known when $\mathbb{K}$ is finite. 
Interesting results were proved by Einsiedler, Lindenstrauss and Mohammadi~\cite{ELM17} where the positive characteristic analogue of the measure classification results of Einsiedler, Katok and Lindenstrauss~\cite{EKL} and Lindenstrauss~\cite{L06} is carried out. It is not immediately clear whether this work implies that LCFF and $t$--LC can only fail on a set of Hausdorff dimension zero. If true, this would provide a counterpart to the above--mentioned results by Einsiedler, Katok and Lindenstrauss~\cite{EKL} and by Einsiedler and Kleinbock~\cite{EK07} on the sets of exceptions to the Littlewood Conjecture and the $p$--adic Littlewood Conjecture, respectively.

\paragraph{}
The aim of the present work is to fill up this gap. More precisely, the following theorem shows that $t$--LC fails over the  field with three elements $\F_3$. It provides an \emph{explicit} counterexample defined from the Paper--Folding sequence $\left(f_n\right)_{n\ge 1}$. Among many other ways, this sequence (also known as the \emph{Dragon Curve Sequence}) can be defined by setting
\begin{equation}\label{rpfseq}
f_n \,= \, \left\{
    \begin{array}{ll}
        0 & \mbox{if } k\equiv 1 \pmod 4;\\
        1 & \mbox{if } k\equiv 3 \pmod 4,
    \end{array}
\right.
\end{equation}
where $n=2^{\nu_2(n)}k$ is a non--zero integer\footnote{To be precise, the (regular) Paper--Folding sequence is more commonly defined in the literature as the sequence $\left(1-f_n\right)_{n\ge 1}$. This corresponds to the coding defined by $0\mapsto 1$ and $1\mapsto 0$ applied to $\left(f_n\right)_{n\ge 1}$ above. It will be slightly more convenient for us to work with definition~\eqref{rpfseq}.}. For accounts on some of the properties enjoyed by this sequence see~\cite[\S 6.5]{AS03} and~\cite{M-FvdP81}.

\begin{thm}\label{thmrinci}
The $t$--adic Littlewood Conjecture fails over $\F_3$. Indeed, the Laurent series $$\Phi\, = \, \sum_{n=1}^{+\infty}f_n t^{-n},$$ where $\left(f_n \right)_{n\ge 1}$ is the Paper--Folding sequence seen as a sequence defined over $\F_3$, is such that 
$$\inf_{N\neq 0, k\ge 0} \left|N\right|\cdot \left|\langle N t^k \Phi\rangle\right|\, = \, 2^{-4}. $$
\end{thm}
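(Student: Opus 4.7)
The plan is to translate Theorem~\ref{thmrinci} into a quantitative statement about Hankel determinants of $(f_n)$ organised into the \emph{Number Wall} of the sequence, and then to exploit the $2$-automaticity of the paper-folding sequence to describe that Number Wall as a two-dimensional automatic tiling. Controlling the sizes of the square windows of zeros in this tiling then yields the required value $3^{-4}$.

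First I would recast~\eqref{tadiclit} in continued fraction language. In the non-Archimedean setting of $\RR$ over $\K=\F_3$, standard best-approximation theory shows that for any $\Psi\in\RR$ with partial quotients $(a_j(\Psi))_{j\ge 1}$,
\[
\inf_{N\neq 0}\left|N\right|\cdot \left|\langle N\Psi\rangle\right|\,=\, 3^{-\sup_{j\ge 1}\deg a_j(\Psi)}.
\]
Hence the theorem is equivalent to the two-sided statement that
\[
\sup_{k\ge 0}\,\sup_{j\ge 1}\,\deg a_j\!\left(t^k\Phi\right)\,=\,4,
\]
so that the Diophantine question reduces to uniformly controlling the partial quotients of every shift $t^k\Phi$. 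The next step is to encode all of these partial quotient degrees in the Number Wall $W = (W_{k,n})_{k\ge 0,\, n\ge 0}$ of $(f_n)$ over $\F_3$, whose entries are the Hankel determinants of shifted windows of $(f_n)$. The classical Frame (or ``Pagoda'') theorem describes when square windows of zeros can appear in $W$: such a window of side $s$ stands in bijection with a partial quotient of degree $s+1$ in the continued fraction expansion of some shift $t^k\Phi$. Consequently, Theorem~\ref{thmrinci} reduces to proving that the largest square of zeros appearing in $W$ has side exactly $3$.

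The core of the argument, and its principal difficulty, lies in describing $W$ explicitly. My approach is to leverage the fact that $(f_n)$ is $2$-automatic in order to conjecture that $W$ itself is the unique two-dimensional tiling of the first quadrant that is a fixed point of an appropriate finite substitution $\sigma$ on an alphabet $\mathcal{A}$ incorporating the nonzero field values, the zero cells, and decorations recording each cell's position relative to the zero window containing it (interior, border, or corner). The proof then splits into (i) guessing $\mathcal{A}$ and $\sigma$ from computer-generated initial segments of $W$; (ii) verifying, through a bounded case analysis, that $\sigma$ is compatible with the Frame relations, which are themselves consequences of Dodgson's condensation identity for determinants; and (iii) checking that the unique tiling generated by $\sigma$ from suitable initial data agrees with $W$ on every bounded region. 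Each step is of a finite combinatorial nature, but the size of the enumeration forces the use of computer assistance; this is where the main obstacle of the proof lies.

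Once the substitutive description of $W$ is in place, the maximum side of a zero window can be read directly off the finite alphabet $\mathcal{A}$, yielding the lower bound $\inf\ge 3^{-4}$. The matching upper bound $\inf\le 3^{-4}$ follows from exhibiting a single explicit $3\times 3$ window of zeros in $W$ --- equivalently, a concrete pair $(N,k)$ realising $|N|\cdot|\langle Nt^k\Phi\rangle|=3^{-4}$ --- which is a routine verification once the first few rows of $W$ have been tabulated.
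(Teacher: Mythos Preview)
Your plan is correct and follows essentially the same route as the paper: reduce the Diophantine statement to the assertion that the largest zero window in the Number Wall of $(f_n)$ over $\F_3$ has side exactly~$3$, then establish this by exhibiting the wall as a two-dimensional automatic tiling compatible with the Frame Constraints, with the easy direction supplied by an explicit $3\times 3$ window. The only notable implementation difference is that the paper realises the automatic structure via an $l$-coding with overlap (a $2$-substitution on $2353$ tiles together with a $13$-coding that is $5$-consistent) rather than through a decorated single-cell alphabet as you suggest; the paper also frames the reduction via Hankel ranks directly (your continued-fraction reformulation appears there as an alternative remark), but these are technical choices within the same overall strategy.
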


\paragraph{}
If $\Theta$ is a power series in $\F_3\left(\left(t^{-1}\right)\right)$, it can also be seen as an element in $ \F_{q}\left(\left(t^{-1}\right)\right)$, where $q=3^s$ with $s\ge 1$. Furthermore, it follows from~\eqref{absfctfield} 
that its absolute value over the latter field 
equals 
its absolute value over the former field. Combined with the result of De Mathan and Teuli\'e~\cite{MT04} for the case when the ground field is infinite, these two observations lead one to the following corollary:

\begin{coro}
The $t$--adic Littlewood Conjecture fails over any ground field with characteristic 3.
\end{coro}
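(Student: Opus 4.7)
The plan is to split the proof into two cases according to whether the ground field $\mathbb{K}$ of characteristic 3 is finite or infinite. When $\mathbb{K}$ is infinite, the failure of $t$--LC is already furnished by the cited result of De~Mathan and Teuli\'e, so nothing needs to be proved. The whole task is therefore to promote the counterexample $\Phi$ of Theorem~\ref{thmrinci} (which lives in $\F_3((t^{-1}))$) to a counterexample over the field $\F_q$ with $q=3^s$ for arbitrary $s\ge 1$.

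Fix such an $s$, let $\{\alpha_0,\dots,\alpha_{s-1}\}$ be an $\F_3$--basis of $\F_q$, and take a nonzero $N\in\F_q[t]$. I would write $N=\sum_{i=0}^{s-1}\alpha_i N_i$ with $N_i\in\F_3[t]$, so that for every $k\ge 0$
\[
\langle N t^k\Phi\rangle\,=\,\sum_{i=0}^{s-1}\alpha_i\,\langle N_i t^k\Phi\rangle
\]
(because $\Phi$ has $\F_3$-coefficients). The key linear-independence observation is that the leading Laurent coefficients (in any fixed power of $t$) on the right-hand side lie in distinct $\F_3\alpha_i$ and therefore cannot cancel; hence, writing $h_i=\deg\langle N_i t^k\Phi\rangle\in\Z_{\le -1}\cup\{-\infty\}$ and similarly $d_i=\deg N_i$, one gets
\[
\deg\langle N t^k\Phi\rangle=\max_i h_i\qquad\text{and}\qquad \deg N=\max_i d_i.
\]
Pick an index $i^{\star}$ realising the maximum for the $h_i$; then $N_{i^{\star}}\ne 0$ and $\deg N\ge d_{i^{\star}}$.

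The remark made just before the corollary says that absolute values over $\F_q$ are the $s$--th power of the corresponding absolute values over $\F_3$. Combining this with the inequalities above gives
\[
\bigl|N\bigr|\cdot\bigl|\langle N t^k\Phi\rangle\bigr|\,\ge\,q^{d_{i^{\star}}+h_{i^{\star}}}\,=\,\bigl(3^{\,d_{i^{\star}}+h_{i^{\star}}}\bigr)^{s}\,=\,\bigl(|N_{i^{\star}}|\cdot|\langle N_{i^{\star}} t^k\Phi\rangle|\bigr)^{s},
\]
where the rightmost absolute values are computed over $\F_3$. Applying Theorem~\ref{thmrinci} to the nonzero $N_{i^{\star}}\in\F_3[t]$ yields $|N_{i^{\star}}|\cdot|\langle N_{i^{\star}} t^k\Phi\rangle|\ge 3^{-4}$, and hence $|N|\cdot|\langle N t^k\Phi\rangle|\ge 3^{-4s}=q^{-4}>0$. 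Thus $\Phi$ remains a counterexample to $t$--LC over~$\F_q$, completing the finite-field case.

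I do not expect any serious obstacle here: Theorem~\ref{thmrinci} does all of the genuine work, and the only substantive point is the non-cancellation of leading coefficients across distinct basis vectors $\alpha_i$, which is a direct consequence of $\F_3$-linear independence. The mildest care is needed in handling indices $i$ with $N_i=0$ (so $h_i=-\infty$), but since $N\ne 0$ forces at least one nonzero $N_i$ the maximum defining $i^{\star}$ is attained by a nonzero component, so the argument goes through unchanged.
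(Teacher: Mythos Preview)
Your proof is correct and follows the same two-case split as the paper: infinite fields via De~Mathan--Teuli\'e, finite fields $\F_{3^s}$ via the observation that absolute values over $\F_q$ are $s$-th powers of those over $\F_3$. The paper's justification (the paragraph preceding the corollary) is terser and leaves implicit exactly the point you make explicit, namely why the infimum over the \emph{larger} ring $\F_q[t]$ is still positive; your $\F_3$-basis decomposition of $N$ and the non-cancellation of leading coefficients across the $\alpha_i$ is the natural way to fill this in.
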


\section{Reduction of the Problem to the Vanishing of certain Hankel Determinants}\label{reduction}

The results established in this section 
are valid over any ground field $\K$. Let 
$\Theta\in\K\left(\left(t^{-1}\right)\right)$. Given the formulation of $t$--LC, one may 
restrict oneself without loss of generality to the case when the polynomial part of $\Theta$ vanishes. Write
\begin{equation}\label{deftheta}
\Theta\, =\, \sum_{n=1}^{+\infty}\theta_nt^{-n},
\end{equation}
where $\left(\theta_n\right)_{n\ge 1}$ is a sequence in $\K$ which, in what follows, 
is identified with the power series $\Theta$ itself.

Define the infinite Hankel matrix $H_{\Theta}$ formed from the power series $\Theta$ as the matrix $H_{\Theta}\, =\,\left(\theta_{i+j-1}\right)_{i,j\ge 1}$; that is, as
\begin{equation*}
	H_{\Theta}\, =\,
	\begin{pmatrix}
		\theta_{1} &  \cdots &  \theta_{k} & \cdots  & \theta_n & \cdots\\
		\vdots &  \iddots &  \vdots & \iddots & \vdots & \iddots \\
		  \theta_{k} &  \cdots &   \theta_{n} & \cdots & \theta_{p} & \cdots  \\
		\vdots &  \iddots &  \vdots & \iddots & \vdots& \iddots  \\				
		\theta_{n} & \cdots & \theta_{p}  & \cdots  & \theta_{q} & \cdots  \\
		\vdots & \iddots & \vdots & \iddots & \vdots & \iddots
	\end{pmatrix} .
\end{equation*}	

Given indices $n\ge 1$ and  $l, m\ge 0$, let 
$H_{\Theta}(n;l,m)$ denote the finite rectangular $(l+1)\times(m+1)$ truncation of the previous matrix with top--left entry $\theta_n$; that is,
\begin{equation*}\label{hankelstructure}
	H_{\Theta}(n;l,m)\, =\,
	\begin{pmatrix}
		\theta_{n} &  \cdots &  \theta_{n+m} \\
		\vdots &  \iddots &  \vdots\\
		  \theta_{n+m} &  \cdots &  \theta_{n+2m}\\
		\vdots &  \vdots  &  \vdots \\	
		  \theta_{n+l-m} &  \cdots &  \theta_{n+l}\\
		\vdots &  \iddots &  \vdots \\				
		\theta_{n+l} & \cdots & \theta_{n+l+m}
	\end{pmatrix}
\end{equation*}	
(this representation corresponds to the case that $l\ge m$ and is easily adapted to the case that $m\ge l$). When $l=m$, one will more conveniently set $$H_{\Theta}(n;l,l) = H_{\Theta}(n;l).$$

\begin{defi}\label{defdeficiency}
Let $\delta\ge 2$ be an integer. The sequence $\left(\theta_n\right)_{n\ge 1}$ of elements in $\K$ is said to have \emph{deficiency} $\delta$ if there exists integers $n\ge 1$ and $l\ge 0$ such that the $\delta-1$ matrices
\begin{equation}\label{deficiencymatr}
H_{\Theta}(n;l),\; H_{\Theta}(n;l+1),\;\dots \;, H_{\Theta}(n;l+\delta-3), \; H_{\Theta}(n;l+\delta-2)
\end{equation}
are singular but such that in any sequence of $\delta$ matrices of the form $$H_{\Theta}(n;l),\;  H_{\Theta}(n;l+1),\dots \;,  H_{\Theta}(n;l+\delta-2),\; H_{\Theta}(n;l+\delta-1)$$ (where $n\ge 1$ and $l\ge 0$), at least one of them is non--singular.

If, for any $n\ge 1$ and $l\ge 0$, none of the matrices $H_{\Theta}(n;l)$ is singular, the sequence  $\left(\theta_n\right)_{n\ge 1}$  is said to have deficiency 1. It is said to have unbounded deficiency if for any integer $\delta\ge 2$, there exist indices  $n\ge 1$ and $l\ge 0$ such that all the matrices~\eqref{deficiencymatr} are singular.
\end{defi}

Say that two finite square submatrices of $H_{\Theta}$ (corresponding to consecutive row and column indices) are nested if they share the same top left entry and if one can be obtained from the other by the addition of a row at the bottom and a column to the right. With this terminology, the sequence  $\left(\theta_n\right)_{n\ge 1}$ having deficiency $\delta\ge 2$ means that one can find a sequence of $\delta-1$ singular submatrices in  $H_{\Theta}$ such that any two successive elements in this sequence are nested; furthermore, there is no sequence of $\delta$ singular submatrices $H_{\Theta}$ enjoying this property. Unbounded deficiency means that arbitrarily long sequences of singular nested submatrices  exist in $H_{\Theta}$ and deficiency 1 just means that  $H_{\Theta}$ contains no singular submatrix.

\paragraph{}
The following theorem reduces $t$--LC to considerations of deficiency of sequences in $\K$.

\begin{thm}\label{themdeficiency}
Let $\left(\theta_n\right)_{n\ge 1}$ be a sequence in $\K$ identified with the power series $\Theta\in\K\left(\left(t^{-1}\right)\right)$ as in~\eqref{deftheta}. Then $\Theta$ satisfies equation~\eqref{tadiclit} (that is, the $t$--adic Littlewood Conjecture over $\K$ is true for $\Theta$) if and only if the sequence $\left(\theta_n\right)_{n\ge 1}$ has unbounded deficiency.

Furthermore, $\Theta$ has deficiency $\delta\ge 1$ if and only if
\begin{equation}\label{eq:deficiency}
\inf_{N\neq 0, k\ge 0} \left|N\right|\cdot \left|\langle N t^k \Theta\rangle\right|\, = \, 2^{-\delta}.
\end{equation}
\end{thm}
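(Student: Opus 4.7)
The plan is to exploit the classical continued-fraction theory over function fields, which translates $t$--LC for $\Theta$ into a statement about partial quotients of the shifts $\langle t^k\Theta\rangle$, and then to use the Frobenius--Kronecker dictionary linking those partial quotients to Hankel determinants. The argument falls naturally into three steps.

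First, for each $k\ge 0$, write $\Phi_k=\langle t^k\Theta\rangle=\sum_{n\ge 1}\theta_{k+n}t^{-n}$, so that $\langle Nt^k\Theta\rangle=\langle N\Phi_k\rangle$ for every $N\in\K[t]$. A standard non-archimedean argument shows that the best approximations to $\Phi_k$ are its continued-fraction convergents, and hence
$$\inf_{N\neq 0}|N|\cdot|\langle N\Phi_k\rangle|\;=\;c_\K^{-\delta_k},\qquad\text{where }\delta_k=\sup_{n\ge 1}\deg a_n(\Phi_k),$$
and $\Phi_k=[0;a_1(\Phi_k),a_2(\Phi_k),\ldots]$ is the continued-fraction expansion of $\Phi_k$ in $\K((t^{-1}))$.

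Second, the key combinatorial input is the classical bijection between partial quotients of $\Phi_k$ and vanishing patterns of the Hankel determinants $\det H_\Theta(k+1;\,l)$. Concretely, if $Q_n$ denotes the $n$th convergent denominator of $\Phi_k$, then $\det H_\Theta(k+1;\,l)\neq 0$ if and only if $l+1=\deg Q_n$ for some $n$, while $\det H_\Theta(k+1;\,l)=0$ precisely when $l+1$ falls strictly between two consecutive values $\deg Q_n$ and $\deg Q_{n+1}$. The number of such gaps equals $\deg a_{n+1}(\Phi_k)-1$, so the longest run of nested singular matrices $H_\Theta(k+1;\,l),H_\Theta(k+1;\,l+1),\ldots$ has length exactly $\delta_k-1$.

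Combining both steps and taking the supremum over $k\ge 0$ yields~\eqref{eq:deficiency}: the longest chain of nested singular Hankels in $H_\Theta$ has length $\delta-1$ if and only if $\inf_{N,k}|N|\cdot|\langle Nt^k\Theta\rangle|=c_\K^{-\delta}$; letting $\delta\to+\infty$ gives the equivalence between $t$--LC and unbounded deficiency. The delicate point --- and the main obstacle --- is the sharpness of the Hankel dictionary: one must verify that at the boundary of a maximal singular block, the corresponding polynomial $N$ has degree \emph{exactly} $l$, and $\langle Nt^k\Theta\rangle$ has valuation \emph{exactly} $-(l+\delta)$. This is the content of Frobenius's classical identities expressing the boundary minors of a singular block as a non-zero product of smaller Hankel determinants, and it is what forces the exponent in~\eqref{eq:deficiency} to be sharp.
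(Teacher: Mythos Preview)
Your argument is correct, and in fact the paper itself sketches exactly this route in Remarks~\ref{rem:normalIndices} and~\ref{rem:continuedFraction}: best-approximation/normal-index theory identifies $\inf_{N}|N|\cdot|\langle N\Phi_k\rangle|$ with $c_{\K}^{-\sup_n\deg a_n(\Phi_k)}$, and the Kronecker--Frobenius correspondence translates the degrees of the partial quotients into the lengths of maximal runs of vanishing Hankel minors. The paper's \emph{main} proof, however, deliberately avoids quoting this machinery. It starts from the elementary equivalence
\[
|N|\cdot|\langle Nt^k\Theta\rangle|<c_{\K}^{-l}\iff\mathrm{rank}\,H_\Theta(k+1;h+l-1,h)<h+1\qquad(h=\deg N),
\]
obtained by direct expansion of $\langle Nt^k\Theta\rangle$, and then proves by hand (Lemma~\ref{grandmacher}, a short Hankel-structure argument) that this column-rank deficiency in a tall rectangular Hankel block is equivalent to $l$ nested singular square Hankel submatrices with top-left entry $\theta_{k+1}$. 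What you gain is brevity and a conceptual explanation of why the exponent in~\eqref{eq:deficiency} is sharp; what the paper's approach buys is complete self-containment, with no appeal to continued-fraction or Pad\'e theory. Your closing remark about ``Frobenius's classical identities'' forcing sharpness is precisely the content handled in the paper by Lemma~\ref{grandmacher}.
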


\begin{proof}
Let $N=a_h t^h +\, \dots \, +a_1 t+ a_0$ be a non--zero polynomial of degree $h\ge 0$ with coefficients in $\K$. Let $k\ge 0$ and $l\ge 1$ be integers. Clearly,
\begin{equation}\label{equivbase}
\left|N\right|\cdot \left|\langle N t^k \Theta\rangle\right|\, <\, 2^{-l} \;\;\; \Longleftrightarrow \;\;\; \left|\langle N t^k \Theta\rangle\right|\, <\,  2^{-(l+h)}.
\end{equation}	
The fractional part on the left--hand side of the latter inequality can be expanded as follows:
\begin{align*}
\langle N t^k \Theta\rangle \,  &=\, \Big\langle \left(a_ht^{k+h}+a_{h-1}t^{k+h-1}+\,\dots\, +a_1 t^{k+1}+a_0t^k\right)\times \\
&\qquad\qquad\qquad \qquad\qquad\qquad\qquad \qquad\qquad\quad\left(\theta_1t^{-1}+\theta_2t^{-2}+\,\dots\, +\theta_mt^{-m}+\,\dots\right) \Big\rangle \\
&=\, t^{-1}\cdot\left(a_0\theta_{k+1}+ a_1\theta_{k+2}+a_2\theta_{k+3}+\,\dots\, +a_h\theta_{k+h+1}\right) +\\
&\qquad t^{-2}\cdot\left(a_0\theta_{k+2}+a_1\theta_{k+3}+\,\dots\, +a_{h-1}\theta_{k+h+1}+a_h\theta_{k+h+2}\right) +\\ &\qquad \dots\, +\\
&\qquad t^{-u}\cdot\left(a_0\theta_{k+u}+\,\dots\,+a_h\theta_{k+h+u}\right)+ \\ &\qquad \dots
\end{align*}
The second inequality in~\eqref{equivbase} means that the coefficients of $t^{-1}, \,\dots\, , t^{-(l+h)}$  in the above expansion all vanish. Defining $\bm{a}$ as the transpose of the row vector $\left(a_0, \,\dots\, a_h\right)$, this can be restated as follows:
\begin{equation}\label{matrixsystemsol}
H_{\Theta}\left(k+1; h+l-1, h\right)\cdot \bm{a}\,=\, 0.
\end{equation}
Note that $H_{\Theta}\left(k+1; h+l-1, h\right)$ is a rectangular matrix with dimensions $(h+l)\times (h+1)$. Therefore, equation~\eqref{matrixsystemsol} holds for some non--zero vector $\bm{a}$ if and only if this matrix does not have maximal rank $h+1$. This shows that
\begin{equation}\label{equivdeduite}
\eqref{equivbase} \; \iff \; \textrm{rank}\left(H_{\Theta}\left(k+1; h+l-1, h\right)\right)\, <\, h+1.
\end{equation}

\paragraph{} The remainder of the proof is split into two parts in order to establish the following claim: the first inequality in~\eqref{equivbase} holds for some integers $l\ge 1$ and $k\ge 0$ and some non--zero polynomial $N$ if and only if the sequence $\left(\theta_n\right)_{n\ge 1}$ has deficiency at least $l+1$. The statements in Theorem~\ref{themdeficiency} then clearly follow from this equivalence and from the definition of the deficiency of a sequence.

\paragraph{}  To begin with, assume that the first inequality in~\eqref{equivbase} holds for some integers $l\ge 1$  and $k\ge 0$ and some non--zero polynomial $N$ of degree $h\ge 0$. The argument to prove that the sequence $\left(\theta_n\right)_{n\ge 1}$ has deficiency at least $l+1$ is straightforward: the rank condition~\eqref{equivdeduite} means the $h+1$ columns of the $(h+l)\times (h+1)$ matrix $H_{\Theta}\left(k+1; h+l-1, h\right)$ are linearly dependent. Extend this matrix to the square matrix $H_{\Theta}\left(k+1; h+l-1\right)$ with dimensions $(h+l)\times (h+l)$. Then, consider the submatrices $H_{\Theta}\left(k+1; m\right)$ with $m=h,\, \dots\, , h+l-1$ (that is, the matrices obtained as the successive upper--left square submatrices of $H_{\Theta}\left(k+1; h+l-1\right)$ starting from the one with dimensions $(h+1)\times (h+1)$). These submatrices are all singular as their first $h+1$ columns satisfy the same relation of linear dependency as the columns of the initial rectangular matrix $H_{\Theta}\left(k+1; h+l-1, h\right)$. As they are nested and as there are $l$ of them, this proves the claim concerning the deficiency of the sequence $\left(\theta_n\right)_{n\ge 1}$.

\paragraph{} Conversely, assume that the sequence $\left(\theta_n\right)_{n\ge 1}$ has deficiency at least $l+1$. The goal is to prove that  the first inequality in~\eqref{equivbase} holds for some integer $k\ge 0$ and some non--zero polynomial $N$. This amounts to proving the existence of integers $k,h\ge 0$ such that the rank condition~\eqref{equivdeduite} holds for the given integer $l\ge 1$.
The argument is more involved and requires the following lemma, which can be found in~\cite[\S 10]{G59}. The proof is reproduced here for the sake of completeness as it is rather short.

\begin{lem}\label{grandmacher}
Let $H=\left(c_{i+j-1}\right)_{1\le i,j\le n}$ be an $n\times n$ Hankel matrix with entries in $\K$. Assume that the first $r$ columns of $H$ are linearly independent but that the first $r+1$ columns are linearly dependent (here, $1\le r \le n-1$). Then the principal minor of order $r$, that is, $\det\left(\left(c_{i+j-1}\right)_{1\le i,j\le r} \right)$, does not vanish.
\end{lem}

\begin{proof}
Denote by $C_1, \,\dots\, , C_n$ the columns of the matrix $H$ under consideration. By assumption, $C_1, \,\dots\, , C_r$ are linearly independent whereas $C_{r+1}$ can be expressed as a linear combination of the latter: $$C_{r+1}\, =\, \sum_{s=1}^r \alpha_{s}C_s,$$ where $\alpha_1, \,\dots\, , \alpha_s$ are coefficients in the field $\K$. From the Hankel structure of the matrix, this implies that the entries of the matrix $H$ satisfy the recurrence relation
\begin{equation}\label{recurrcnerelationcoeff}
c_k\, =\, \sum_{s=1}^{r}\alpha_{s}c_{k-r-1+s}
\end{equation}
valid for all $k= r+1,\,\dots\, , r+n$. Write
\begin{equation*}\label{martixredueced}
(C_1, \,\dots\, ,C_r)\, =\,
	\begin{pmatrix}
		c_{1} &  c_2 &  c_{3} & \cdots & c_r \\
		c_2 &  c_3 &  c_4 & \cdots & c_{r+1} \\
		  c_{3} &  c_4 &   c_{5} & \cdots & c_{r+2} \\
		\vdots &  \vdots &  \vdots  &  \vdots  &  \vdots \\		
		\vdots &  \vdots &  \vdots  &  \vdots  &  \vdots \\				
		  c_{n-1} &  c_{n} &  c_{n+1} & \cdots &   c_{r+n-2}  \\
		c_{n} &  c_{n+1} &  c_{n+2} & \cdots &   c_{r+n-1}
	\end{pmatrix} ,
\end{equation*}
which is a matrix of rank $r$ by assumption. It follows from the recurrence relations~\eqref{recurrcnerelationcoeff} that each of the rows of this matrix depends linearly on the preceding $r$ rows, hence on the first $r$ ones. Since the matrix has rank $r$, this implies that the first $r$ rows must be linearly independent; that is, that $\det\left(\left(c_{i+j-1}\right)_{1\le i,j\le r} \right)$ does not vanish, as was to be proved.
\end{proof}

Since it is assumed that the sequence $\left(\theta_n\right)_{n\ge 1}$ has deficiency at least $l+1$, its infinite Hankel matrix $H_{\Theta}$ contains $l$ nested singular submatrices, say $H_{\Theta}\left(k+1; h+ m\right)$, where $k, h\ge 0$ and where $m=0, \, \dots\, l-1$. Since the largest of these matrices, viz.~$H:=H_{\Theta}\left(k+1; h+ l-1\right)$, is singular, its $h+l$ columns are linearly dependent. Since the second largest matrix $H_{\Theta}\left(k+1; h+ l-2\right)$ is also singular, Lemma~\ref{grandmacher} implies that the first $h+l-1$ columns of the initial matrix $H$ cannot be linearly independent. Furthermore, since the  third largest matrix $H_{\Theta}\left(k+1; h+ l-3\right)$  is also singular, the same lemma implies that the  first $h+l-2$ columns of the initial matrix $H$ cannot be linearly independent. An easy induction then shows that the first $h+1$ columns of the matrix $H$ are linearly dependent; that is, that $\textrm{rank}\left(H_{\Theta}\left(k+1; h+l-1, h\right)\right)\, <\, h+1$. The equivalence stated in~\eqref{equivdeduite} then shows that the first inequality in~\eqref{equivbase} holds, as was to be established.

\paragraph{}
This completes the proof of Theorem~\ref{themdeficiency}.
\end{proof}

\begin{rem}\label{rem:normalIndices}
It is worthwhile to mention another shorter but less self--contained proof of Theorem~\ref{themdeficiency}. A \emph{best approximation degree} of $\Theta\in\RR$ is an integer $h$ such that there exists a polynomial $N$ of degree $h$ with
\[
\left|\langle N \Theta\rangle\right| < \inf_{\deg M<h}\left|\langle M \Theta\rangle\right|.
\]
Let $\left(h_m\right)_{m=0}^\infty$ be the sequence of best approximation degrees of $\Theta$ and let $\left(N_m\right)_{m=0}^\infty$ be the associated best approximation polynomials. It is known \cite[\S 9]{dT04} that
\begin{equation}\label{eq:bestApproximation}
\left|\langle N_m \Theta\rangle\right| = 2^{-h_{m+1}}.
\end{equation}
Therefore,
\[
\left| N_m\right| \left|\langle N_m \Theta\rangle\right| = 2^{-\left( h_{m+1}-h_m\right)}.
\]
Given $\Theta\in\RR$, let $h_{k,m}$ be the $m^{\textrm{th}}$ best approximation degree of $t^k\Theta$. Then
\[
\inf_{N\neq0,\,k\geq0}\left| N\right| \left|\langle Nt^k \Theta\rangle\right| = \inf_{m\geq0,\,k\geq0}\left| N_{k,m}\right| \left|
\langle N_{k,m}t^k \Theta\rangle\right| = 2^{-\alpha \left(\Theta\right) },
\]
where
\[
\alpha \left(\Theta\right) = \sup\limits_{m\geq0,\,k\geq0}\left( h_{k,m+1} - h_{k,m}\right).
\]
On the other hand, a \emph{normal index} for $\Theta$ is an integer $h$ such that the matrix $H_\Theta(1;h)$ is invertible. It is a standard fact in the theory of Pad\'e approximation that easily follows from \eqref{eq:bestApproximation} that $h$ is a normal index if and only if $h$ is a best approximation degree  (see, e.g., \cite[Proposition 2]{KTW99} for details). Therefore, by Definition \ref{defdeficiency}, the deficiency of $\Theta$ is precisely the quantity  $\alpha \left(\Theta\right)$ defined above.
\end{rem}

\begin{rem}\label{rem:continuedFraction}
Continuing on the previous remark, note that $h_{k,m+1}-h_{k,m}$ is precisely the degree of the rational fraction $N_{k,m+1}/N_{k,m}$, which is also the degree of the $m^{\textrm{th}}$ partial quotient in the continued fraction expansion of $t^k\Theta$ (see \cite[\S 9]{dT04} for an account on the theory of continued fractions in $\RR$). Theorem \ref{themdeficiency} may then be rephrased in this language as follows: equation~\eqref{eq:deficiency} ~holds if and only if the maximal degree of the partial quotients in the continued fraction expansions of all the Laurent series $\Theta,t\Theta,t^2\Theta, \ldots$ is $\delta$.
\end{rem}

In view of Theorem~\ref{themdeficiency}, Theorem~\ref{thmrinci} becomes an immediate corollary of the following statement, which will be established in the next sections:

\begin{thm}\label{thmderive}
The Paper--Folding sequence $\left(f_n\right)_{n\ge 1}$ has deficiency 4 over $\F_3$.
\end{thm}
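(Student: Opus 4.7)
The strategy is to analyze the Number Wall of $\Phi$ over $\F_3$, namely the doubly-indexed array $W = (W(n,l))_{n\geq 1,\,l\geq 0}$ defined by $W(n,l) = \det H_\Phi(n;l)$, together with the usual boundary conventions $W(n,-1) = 1$ and $W(n,-2) = 0$. By Definition~\ref{defdeficiency} and the equivalence in Theorem~\ref{themdeficiency}, asserting that $(f_n)$ has deficiency exactly 4 is equivalent to the conjunction of the following two statements about $W$: (a) some column of $W$ contains three consecutive zeros, i.e., there exist $n \geq 1$ and $l \geq 0$ with $W(n,l) = W(n,l+1) = W(n,l+2) = 0$; and (b) no column of $W$ contains four consecutive zeros, i.e., for every $n \geq 1$ and $l \geq 0$, at least one of the four successive entries $W(n,l), W(n,l+1), W(n,l+2), W(n,l+3)$ is nonzero.

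The lower bound (a) is a finite verification: I would exhibit explicit small indices $(n,l)$ for which the three vanishings can be checked directly from~\eqref{rpfseq}, computing the relevant Hankel determinants mod~3 by hand or by elementary row operations.

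The main content, and the expected main obstacle, is the global upper bound (b). The plan is to exploit simultaneously two structural inputs. The first is that the Paper--Folding sequence is $2$--automatic, generated by an explicit length--2 substitution morphism; this should propagate to a self--similar structure of $W$ under the doubling maps $(n,l) \mapsto (2n+\epsilon, 2l+\eta)$. The second is the classical Conway--Guy theory of Number Walls, providing local \emph{frame} and \emph{cross/square} identities that let one express an entry of $W$ as a rational function of a bounded window of neighbouring entries; in particular the frame identity governs the propagation of a block of zeros. Combining these, I would prove that $W$ can be produced by a $2$--dimensional substitution over a finite alphabet: each letter records a bounded window of vanishing pattern of $W$ together with the necessary nonzero algebraic side data (essentially the residual ratios appearing in the frame rule), and the substitution rule is obtained by transcribing the Paper--Folding morphism through the Number Wall identities. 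The key claim is that this alphabet is closed under the substitution.

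Once the alphabet is proved to be finite and closed --- the computer--assisted step advertised in the abstract --- statement (b) becomes a finite combinatorial check: one enumerates all $4$--tall vertical patterns realized inside any tile of the substitution and verifies that none of them consists of four zeros. The finite closure is what I expect to be genuinely delicate, since one must both guess an alphabet small enough to enumerate and verify that the frame/square identities never force an exit from it; both the choice of encoding (how much local data a tile must carry) and the bookkeeping of the algebraic relations under doubling are where the real work lies. Once (a) and (b) are both established, Definition~\ref{defdeficiency} yields deficiency exactly $4$, which is the claim of Theorem~\ref{thmderive}.
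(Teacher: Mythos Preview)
Your proposal is correct and follows essentially the same approach as the paper: establish the lower bound by a direct finite computation, then prove the upper bound by showing that the Number Wall over $\F_3$ is a two--dimensional $2$--automatic tiling over a finite alphabet (the paper uses $2353$ tiles encoded as $13\times 13$ blocks with overlap $5$), verify by a finite computer check that this tiling satisfies the Frame Constraints and contains no $4\times 4$ zero square, and confirm that its zeroth row is the Paper--Folding sequence. One small refinement you will need in execution is an overlap mechanism (cf.~Lemma~\ref{lem:consistentOverlap}) so that every bounded pattern --- in particular every candidate $4$--block of zeros and every $7$--block needed to verify the Frame Constraints --- is guaranteed to lie entirely inside a single coded tile or a single $2$--pattern of tiles, not merely ``inside any tile'' as your sketch says.
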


Considerations of deficiency are ubiquitous in the literature due to their connections with linear recurrence sequences, Pad\'e approximations and problems of irrationality. The  results known in this topic are nevertheless rather limited.

When $\K=\R$, it is not hard to construct a sequence with deficiency 1 by requiring that it should increase sufficiently fast. The situation turns out to be much more complicated in the case that one has to determine the deficiency of a \emph{given} sequence. The fundamental work by Allouche, Peyri\`ere, Wen and Wen~\cite{APWW98}
establishes, with the help of sixteen recurrence relations, that the \emph{principal minors} of the infinite Hankel matrix of the Thue--Morse sequence never vanish. Coons~\cite{C13} obtained a result with a similar flavour in the case of sequences defined from the sum of the reciprocals of the Fermat numbers. A combinatorial and simpler proof of both of these results was later provided by Bugeaud and Han~\cite{BH14}. Coons and Vrbik~\cite{CV12} also considered the case of the Paper--Folding sequence over $\R$ and established computationally that a large (finite) number of the principal minors of its Hankel matrix does not vanish. For further recent results on the non--vanishing of principal minors for classes of real sequences, the reader is referred to~\cite{B, BZ14, BHWY}.

The problem is even less understood over finite fields. The only reasonably complete result seems to be~\cite{APWW98}, where it is shown that some sequences defined as Hankel determinants of the Thue--Morse sequence over $\F_2$ are 2--automatic. In the same paper are also considered the determinants of the successive nested submatrices sharing a common top left entry in the Hankel matrix of the Thue--Morse sequence. It is proved that the sequence thus defined is periodic and not constantly equal to zero modulo 2. This, however, does not mean that the Thue--Morse sequence has bounded deficiency over $\F_2$, as the period may vary with the choice of the top--left entry. In fact, as the generating function of the Thue--Morse sequence is quadratic over $\bbf_2\left(\left(t^{-1}\right)\right)$ (see, e.g., \cite[Equation (9)]{BZ14}), the arguments presented in Section \ref{sec:charTwo} below together with Theorem~\ref{themdeficiency} above imply that the Thue--Morse sequence has unbounded deficiency over $\F_2$. For related work, see also~\cite{hu}.

A remarkable exception to the current poor understanding of the structure of determinants formed from square submatrices in the infinite Hankel matrix of given sequences is due to Kamae, Tamura and Wen~\cite{KTW99}. These authors  consider the Fibonacci word over an alphabet $\{a, b\}$. Substituting $(a,b)=(1,0)$ or $(a,b)=(0,1)$ determines two real sequences and the corresponding infinite Hankel matrices. Rather explicit formulae are provided in~\cite{KTW99} for the determinant of any square submatrix sitting in these matrices. These formulae imply, in particular, that the Fibonacci sequences defined this way have unbounded deficiency. To the best of the authors' knowledge, this, together with the results of the present paper, constitutes the only cases when the determinantal structure of infinite Hankel matrices is completely elucidated for a given eventually non--periodic sequence.

\paragraph{} In what follows, Theorem~\ref{thmderive} will be proved by introducing the concept of a Number Wall, which is an array containing information about the Hankel determinants formed from a given sequence.

\section{The Number Wall of a Sequence}\label{sec:numberWall}

\subsection{Definition and Properties}\label{defpropnbwall}

Let $\Theta = \left(\theta_n\right)_{n\in\Z}$ be a \emph{doubly--infinite} sequence defined over a field $\K$. The \emph{Number Wall} of this sequence is a two--dimensional array  $S\left(\Theta\right)= \left(S_{m,n}(\Theta)\right)_{m,n\in\Z}$ defined as follows: for any $m, n\in \Z$, $S_{m,n}(\Theta)$ is the Toeplitz determinant
\begin{equation}  \label{defnumberwall}
S_{m,n}(\Theta)\, =\,
\begin{vmatrix}
	\theta_{n} &  \theta_{n+1} & \cdots &  \theta_{n+m-1} & \theta_{n+m} \\
	\theta_{n-1} &  \theta_{n} &  \cdots & \theta_{n+m-2} & \theta_{n+m-1} \\
	\vdots &  \vdots &  \vdots  &  \vdots  &  \vdots \\					
	\theta_{n-m+1} &  \theta_{n-m+2} & \cdots & \theta_{n}  &   \theta_{n+1}  \\
	\theta_{n-m} &  \theta_{n-m+1} &  \cdots & \theta_{n-1}  &   \theta_{n}
\end{vmatrix}
\end{equation}
when  $m\ge 0$ and $n\in\Z$ (each diagonal contains the same entry); $S_{m,n}(\Theta) =1$ when $m=-1$ and $n\in\Z$, and  $S_{m,n}(\Theta) =0$ when $m<-1$ and $n\in\Z$. In the Number Wall $ S\left(\Theta\right)$,
\begin{center}
\textit{rows and columns are indexed as in standard matrix notation;}
\end{center}
that is, the first index $m$ records the row number and increases towards page bottom and the second index $n$ records the column number and increases towards page right. This convention will be adopted throughout the paper for \emph{any} array of numbers in the plane.

A Number Wall records Toeplitz determinants of a sequence rather than its Hankel determinants. This enables one to express the properties of such a Wall (see below) in a symmetrical way (this insight is owed to John Conway). Since a Toeplitz determinant as above is obtained under reflection and sign change $(-1)^{m(m+1)/2}$ from the corresponding Hankel determinant, one of these two determinants vanishes if and only if so does the other.

For the sake of simplicity of notation, set from now on $$ S= S\left(\Theta\right) \quad \mbox{ and }\quad S_{m,n}\, =\, S_{m,n}(\Theta)$$ for any $m,n\in\Z$. Properties of Number Walls have been extensively studied in~\cite{L01}. The most fundamental of them, which turns out to be a particular case of the Desnanot--Jacobi identity for determinants, can be stated as follows:

\begin{thm}
For any $m,n\in\Z$, $$S_{m,n}^2\, =\, S_{m+1, n}S_{m-1, n} + S_{m, n+1}S_{m, n-1}.$$
\end{thm}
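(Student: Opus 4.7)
The plan is to obtain this identity as a direct application of the Desnanot--Jacobi identity (a.k.a.~Dodgson condensation) to the Toeplitz matrix defining $S_{m+1, n}$. Recall that Desnanot--Jacobi states that for any $k \times k$ matrix $M$ with $k \geq 2$,
\[
\det(M) \cdot \det\bigl(M^{1,k}_{1,k}\bigr) \; = \; \det\bigl(M^1_1\bigr)\det\bigl(M^k_k\bigr) \, - \, \det\bigl(M^1_k\bigr)\det\bigl(M^k_1\bigr),
\]
where $M^i_j$ denotes the minor obtained by deleting row $i$ and column $j$ from $M$, and $M^{1,k}_{1,k}$ denotes the minor obtained by deleting both outermost rows and both outermost columns.

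I would first treat the main case $m \geq 1$ as follows. Let $M$ denote the $(m+2) \times (m+2)$ Toeplitz matrix whose $(i,j)$ entry (with $0 \le i, j \le m+1$) is $\theta_{n+j-i}$, so that $\det(M) = S_{m+1, n}$. The key point is that the Toeplitz structure is preserved under deletion of outermost rows and columns, and a short index-tracking verification then shows that the five relevant minors of $M$ are themselves entries of the Number Wall. Specifically, $M^1_1$ and $M^{m+2}_{m+2}$ both equal the Toeplitz matrix defining $S_{m,n}$; the minor $M^1_{m+2}$ arises from shifting $n \mapsto n-1$ and hence has determinant $S_{m, n-1}$; symmetrically, $M^{m+2}_1$ has determinant $S_{m, n+1}$; and the doubly punctured minor $M^{1,m+2}_{1,m+2}$ is the matrix of $S_{m-1, n}$. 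Substituting into the Desnanot--Jacobi identity yields
\[
S_{m+1, n} \cdot S_{m-1, n} \; = \; S_{m,n}^{\,2} \, - \, S_{m, n-1} \cdot S_{m, n+1},
\]
which rearranges into the claimed formula.

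The edge cases $m \leq 0$ must be treated separately. For $m < -1$ both sides vanish by the convention $S_{m,n} = 0$; for $m = -1$ the identity reduces to $1 = S_{0,n}\cdot 0 + 1 \cdot 1$ using $S_{-2,n} = 0$ and $S_{-1,n} = 1$; and for $m = 0$ the identity follows from the explicit evaluation $S_{1,n} = \theta_n^2 - \theta_{n-1}\theta_{n+1}$ combined with $S_{-1,n} = 1$.

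The only mildly delicate step is the bookkeeping in the identification of the four non-trivial corner minors of $M$ with entries of $S$. This is not a conceptual obstacle but simply requires care, because the Number Wall is built from Toeplitz (rather than Hankel) determinants; the advantage of this convention—pointed out in the text—is precisely that outer row/column deletions act on the index $n$ by clean shifts of $\pm 1$, which is exactly what makes the Desnanot--Jacobi identity produce the neighbours of $S_{m,n}$ symmetrically in both the $m$ and $n$ directions.
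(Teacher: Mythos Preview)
Your argument is correct and is precisely the approach the paper alludes to: the text explicitly identifies the relation as a particular case of the Desnanot--Jacobi identity, and the reference~\cite{L01} carries out exactly the minor identification you describe. Your handling of the boundary cases $m\le 0$ is also in order.
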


\begin{proof}
See~\cite[p.8]{L01}.
\end{proof}

It readily follows from this theorem that the entry $S_{m,n}$ in row $m$ in  the Number Wall can be computed from entries in rows $m-1$ and $m-2$ \emph{provided that } $S_{m-2, n}$ \emph{does not vanish}. In the case, however, that the this quantity vanishes, the above formula cannot be used anymore. A remarkable feature of Number Walls is that such zero entries can only occur in very specific shapes:

\begin{thm}\label{themzeroentries}
Zero entries in a Number Wall can only occur within \emph{windows}; that is, within square regions with horizontal and vertical edges.
\end{thm}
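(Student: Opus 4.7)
The plan is to analyze the shapes of zero configurations in $S$ by systematically applying the identity
$$S_{m,n}^2 \, =\, S_{m+1,n}S_{m-1,n} + S_{m,n+1}S_{m,n-1}$$
from the preceding theorem. At a zero, this relation collapses to
$$S_{m+1,n}S_{m-1,n} \, =\, -S_{m,n+1}S_{m,n-1},$$
which is the main engine of the proof. The target is to show that any maximal connected component $Z$ of zero entries (for the four-adjacency relation) is necessarily an axis-aligned square.

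The first step is a \emph{propagation lemma}: if two orthogonally adjacent entries are both zero, then the collapsed identity applied at one of them forces at least one further zero perpendicular to the pair. Iterating this principle from any starting pair of adjacent zeros shows that $Z$ cannot degenerate into a thin row or column segment of zeros; it must grow in two orthogonal directions, and in fact forces some rectangular structure to appear.

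Next, letting $R$ denote the axis-aligned bounding rectangle of $Z$, the remaining argument splits into two claims: (a) every entry of $R$ lies in $Z$, so that $R = Z$; and (b) $R$ has equal width and height. For claim (a) I would argue by contradiction starting from an interior non-zero entry in $R$: with sufficiently many surrounding zeros already placed by propagation, the collapsed identity applied at an adjacent zero forces a factorised right-hand side that cannot vanish. For claim (b), which I expect to be the core difficulty, I would examine the outer frame of entries immediately bordering $R$, all non-zero by maximality of $Z$. Applying the identity just outside each of the four corners of $R$ produces four multiplicative relations among the frame entries which, by the coordinate-swap symmetry $m \leftrightarrow n$ of the identity, are only mutually consistent when the width and the height of $R$ coincide.

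The main obstacle is the squareness claim (b): propagation alone permits rectangular clusters of arbitrary aspect ratio, so squareness has to arise from a global analysis of the non-zero frame around $R$ rather than from a purely local identity at a single entry. Exhibiting a single algebraic obstruction which uniformly rules out every non-square rectangular configuration, independently of the size of the cluster, is where I expect the principal effort of the proof to lie.
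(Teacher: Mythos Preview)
The paper does not give its own proof; it cites \cite{L01}. There the argument works directly with the determinant definition~\eqref{defnumberwall} and proceeds via rank considerations for the underlying Toeplitz (equivalently Hankel) matrices, in the spirit of Lemma~\ref{grandmacher}. Your plan is a genuinely different route, attempting to derive everything from the Desnanot--Jacobi identity of the preceding theorem alone.

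That route has a real gap at claim~(b). Inside a zero rectangle the identity degenerates to $0=0$, so the only constraints available are those along the inner frame. Applying the identity along each edge does show that the four edge sequences are geometric, and applying it near the corners links the ratios of adjacent edges; but those corner relations are scale--free and look identical for a $2\times 2$ window and a $2\times 3$ window. Your appeal to the ``coordinate--swap symmetry $m\leftrightarrow n$'' is the crux of the problem: this is a symmetry of the \emph{recurrence}, not of any particular Number Wall --- the boundary conditions in rows $m\le -1$ break it --- so it cannot be leveraged to force width~$=$~height. In fact the single relation $S_{m,n}^2=S_{m+1,n}S_{m-1,n}+S_{m,n+1}S_{m,n-1}$ underdetermines an array once a zero region is entered; this is exactly why the extra Frame Constraints of Corollary~\ref{frameconstraints} are required to characterise Number Walls, and why squareness cannot be extracted from that one relation in isolation. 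To close the gap you must go back to the fact that $S_{m,n}$ is a determinant of a specific Toeplitz matrix: a kernel vector witnessing $S_{m,n}=0$ can then be shifted, by the Hankel/Toeplitz structure, both horizontally and vertically, and tracking the resulting rank drops is what forces the vanishing locus to be square.
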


\begin{proof}
See~\cite[p.9]{L01}.
\end{proof}

For the sake of brevity, a window in a Number Wall containing only zero entries will from now on be referred to as a \emph{window}. In what follows, it will be convenient to define (with a slight abuse of terminology) the deficiency of a window in a Number Wall as being equal to $\delta\ge 1$ if the window has side length $\delta-1$.

Figure~\ref{[origwind]} below depicts such window. The entries surrounding such a region (here cor\-res\-pon\-ding to the sequences $A_k, B_k, C_k$ and $D_k$) will be referred to as the \emph{inner frame} of the window. The entries surrounding the inner frame (here corresponding to the sequences $E_k, F_k, G_k$ and $H_k$) will be referred to as the \emph{outer frame} of the window.

\par

\begin{figure}[htb]
\centering
{\def\horiz{\leaders\hrule\hfil} 
\def\horil{\tab\leaders\hrule\hfil}
\def\horir{\leaders\hrule\hfil\hbox{}\tab}
\def\vertl{\tab\vrule\hfil}
\def\vertr{\hfil\vrule\tab}
\def \tab {\hskip 12pt}
\def \box {\hskip 24pt}
\vbox{$
\hbox{\vbox{\offinterlineskip 
\halign{&#&#&#&$#$&$#$&#&#&$#$&$#$&$#$&$#$&$#$&$#$&#&$#$&$#$&#&#&#\cr 
&\horil &\horiz &\horiz &\horiz &\horiz &\horiz &\horiz &\horiz &\horiz &\horiz &\horiz &\horiz &\horiz &\horiz &\horiz &\horiz &\horir &\cr
&\vertl &\tab   &\box   &\box   &\hfil  &\hfil  &\box   &\box   &\box   &\box   &\box   &\box   &\hfil  &       &       &\tab   &\vertr &\cr
&\vertl &\tab   &\strut &       &\hfil  &\hfil  &       &       &       &       &       &       &\hfil  &       &       &\tab   &\vertr &\cr
&\vertl &\tab   &       &E_0    &\hfil  &\hfil  &E_1    &E_2    &\ldots &E_k    &\ldots &E_{\delta-1}&\hfil  &E_{\delta}    &       &\tab   &\vertr &\cr
&\vertl &\tab   &\strut &       &\hfil  &\hfil  &       &       &       &       &       &       &\hfil  &       &       &\tab   &\vertr &\cr
&\vertl &\tab   &F_0    &B_0,A_0  &\hfil  &\hfil  &A_1    &A_2    &\ldots &A_k    &\ldots &A_{\delta -1}&\hfil  &A_{\delta},C_{\delta}\ &G_{\delta}    &\tab   &\vertr &\cr
&\vertl &\tab   &\strut &       &\hfil  &\hfil  &       &       &       &       &       &       &\hfil  &       &       &\tab   &\vertr &\cr
&\vertl &\tab   &       &\hfil  &\horil &\horiz &\horiz &\horiz &\horiz &\horiz &\horiz &\horiz &\horir &\hfil  &       &\tab   &\vertr &\cr
&\vertl &\tab   &\strut &       &\vertl &\tab   &       &       &       &       &       &       &\vertr &       &       &\tab   &\vertr &\cr
&\vertl &\tab   &F_1    &B_1    &\vertl &\tab   &{\bf 0}&{\bf 0}&\ldots &{\bf 0}&\ldots &{\bf 0}&\vertr &C_{\delta-1}&G_{\delta-1}&\tab   &\vertr &\cr
&\vertl &\tab   &\strut &       &\vertl &\tab   &       &       &       &       &       &       &\vertr &       &       &\tab   &\vertr &\cr
&\vertl &\tab   &F_2    &B_2    &\vertl &\tab   &{\bf 0}&\ddots &(P)    &\rightarrow&   &\vdots &\vertr &\vdots &\vdots &\tab   &\vertr &\cr
&\vertl &\tab   &\strut &       &\vertl &\tab   &       &       &       &       &       &       &\vertr &       &       &\tab   &\vertr &\cr
&\vertl &\tab   &\vdots &\vdots &\vertl &\tab   &\vdots &(Q)    &\ddots &   &\,\uparrow &{\bf 0}&\vertr &C_k    &G_k    &\tab   &\vertr &\cr
&\vertl &\tab   &\strut &       &\vertl &\tab   &       &       &       &       &       &       &\vertr &       &       &\tab   &\vertr &\cr
&\vertl &\tab   &F_k    &B_k    &\vertl &\tab   &{\bf 0}&\,\downarrow & &\ddots &(R)    &\vdots &\vertr &\vdots &\vdots &\tab   &\vertr &\cr
&\vertl &\tab   &\strut &       &\vertl &\tab   &       &       &       &       &       &       &\vertr &       &       &\tab   &\vertr &\cr
&\vertl &\tab   &\vdots &\vdots &\vertl &\tab   &\vdots &   &\leftarrow &(S)    &\ddots &{\bf 0}&\vertr &C_2    &G_2    &\tab   &\vertr &\cr
&\vertl &\tab   &\strut &       &\vertl &\tab   &       &       &       &       &       &       &\vertr &       &       &\tab   &\vertr &\cr
&\vertl &\tab   &F_{\delta-1}&B_{\delta-1}&\vertl &\tab   &{\bf 0}&\ldots &{\bf 0}&\ldots&{\bf 0} &{\bf 0}&\vertr &C_1    &G_1    &\tab   &\vertr &\cr
&\vertl &\tab   &\strut &       &\vertl &\tab   &       &       &       &       &       &       &\vertr &       &       &\tab   &\vertr &\cr
&\vertl &\tab   &       &\hfil  &\horil &\horiz &\horiz &\horiz &\horiz &\horiz &\horiz &\horiz &\horir &\hfil  &       &\tab   &\vertr &\cr
&\vertl &\tab   &\strut &       &\hfil  &\hfil  &       &       &       &       &       &       &\hfil  &       &       &\tab   &\vertr &\cr
&\vertl &\tab   &F_{\delta}    &B_{\delta},D_{\delta}  &\hfil  &\hfil  &D_{\delta-1}&\ldots &D_k    &\ldots &D_2    &D_1    &\hfil  &D_0,C_0\ &G_0    &\tab   &\vertr &\cr
&\vertl &\tab   &\strut &       &\hfil  &\hfil  &       &       &       &       &       &       &\hfil  &       &       &\tab   &\vertr &\cr
&\vertl &\tab   &       &H_{\delta}   &\hfil  &\hfil  &H_{\delta-1}&\ldots &H_k    &\ldots &H_2    &H_1    &\hfil  &H_0    &       &\tab   &\vertr &\cr
&\vertl &\tab   &\strut &       &\hfil  &\hfil  &       &       &       &       &       &       &\hfil  &       &       &\tab   &\vertr &\cr
&\horil &\horiz &\horiz &\horiz &\horiz &\horiz &\horiz &\horiz &\horiz &\horiz &\horiz &\horiz &\horiz &\horiz &\horiz &\horiz &\horir &\cr
}}}$}}
\caption{A Number Wall Window.}
\label{[origwind]}
\end{figure}
\par

Extending in the natural way the definition of the deficiency of a one--sided sequence (see Definition~\ref{defdeficiency}) to a doubly infinite sequence $\Theta$, this concept can be reinterpreted in terms of some properties satisfied by the Number Wall $S$ of $\Theta$.

To this end, note first that, with the notation of Section~\ref{reduction}, the determinant of the Hankel matrix $H_{\Theta}(n; l)$ (where $n\in\Z$ and $l\ge 0$) is, up to a possible change of sign, the entry $S_{n+l, l}$ of the Number Wall $S$. It thus follows from Definition~\ref{defdeficiency} that $\Theta$ has deficiency $\delta\ge 2$ if and only if its Number Wall admits a diagonal with $\delta-1$ zero entries (that is, if and only if there exist integers $n\in\Z$ and $l\ge 0$ such that $S_{n+l+k, l+k} =0$ for all $0\le k\le \delta-2$) but no diagonal with zero entries of any longer size (the deficiency is equal to 1 if there is no zero entry in the Number Wall). In view of Theorem~\ref{themzeroentries}, this amounts to claiming that the Number Wall of the sequence $\Theta$ admits a window with side length $\delta-1$ but no zero window of any larger size (in the case that the deficiency is 1, this amounts to claiming that there are no zeros at all in the Number Wall).


\begin{thm}\label{innerframe}
The inner frame of a window with finite deficiency $\delta\ge 2$ comprises four geometric sequences, along top, left, right and bottom edge with ratios $P, Q, R$ and  $S$ respectively, from origins at top--left and bottom--right corner (see Figure~\ref{[origwind]}). Furthermore, these ratios satisfy the relation $$\frac{PS}{QR}\,=\, \left(-1\right)^{\delta -1}.$$
\end{thm}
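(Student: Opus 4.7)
The plan is to split the proof into two parts: first establish that each edge of the inner frame is a geometric progression, then determine the cross-ratio $PS/(QR)$.

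For the geometric-sequence claim, I apply the square rule $S_{m,n}^2 = S_{m+1,n}S_{m-1,n} + S_{m,n+1}S_{m,n-1}$ at each non-corner entry of the inner frame. Consider a top-edge entry $A_k$ with $1 \le k \le \delta - 1$, located at Number Wall position $(m_0 - 1, n_0 + k - 1)$, where the zero window occupies rows $[m_0, m_0 + \delta - 2]$ and columns $[n_0, n_0 + \delta - 2]$. The entry $S_{m_0, n_0 + k - 1}$ sitting directly beneath $A_k$ lies inside the window and thus vanishes, so the square rule collapses to $A_k^2 = A_{k-1} A_{k+1}$. Non-vanishing of the inner-frame entries is forced by the maximality of the window combined with Theorem~\ref{themzeroentries}, since any vanishing entry on the frame would enlarge the window. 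Setting $P := A_1/A_0$, induction on $k$ yields $A_k = A_0 P^k$ for all $0 \le k \le \delta$, and the identical argument applied to the other three edges produces the ratios $Q$, $R$, $S$.

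For the cross-ratio identity, the four corner identifications $A_0 = B_0$, $A_\delta = C_\delta$, $B_\delta = D_\delta$, $C_0 = D_0$ together with the geometric progressions give $A_0 P^\delta = C_0 R^\delta$ and $A_0 Q^\delta = C_0 S^\delta$. Dividing and cancelling yields $(PS/(QR))^\delta = 1$, so $PS/(QR)$ is necessarily a $\delta$-th root of unity in $\K$. The base case $\delta = 2$ pins down the value directly: the square rule at the unique zero of the window reads $0 = A_1 D_1 + B_1 C_1$, and substituting the geometric sequences with the corner identifications gives $A_0 C_0 (PS + QR) = 0$, hence $PS = -QR = (-1)^{\delta-1}QR$. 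For $\delta \ge 3$, however, the direct square rule at any zero of the window degenerates to the trivial $0 = 0$, so a longer-range identity that crosses the window is needed. The plan is to apply a generalized Dodgson/Sylvester identity to an appropriately-sized Toeplitz matrix whose central $(\delta-1) \times (\delta-1)$ principal submatrix equals a singular Number Wall entry inside the window. Sylvester's identity then forces a $2 \times 2$ determinant of bordered minors to vanish, and with suitable positioning these bordered minors become products of inner-frame corner values. The signature of the permutation required to send a row index across the $(\delta-1)$-wide gap of the window then produces the factor $(-1)^{\delta-1}$.

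The main obstacle is precisely this sign bookkeeping. Intuitively, threading an index from one side of the window to the other requires $\delta - 1$ adjacent transpositions in the permutation expansion of the relevant determinant, each contributing a factor $-1$, so that the net sign is $(-1)^{\delta-1}$. A fully self-contained verification requires the detailed combinatorial computation carried out in~\cite{L01}, which is the natural reference to invoke here.
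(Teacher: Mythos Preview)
The paper's own proof of this theorem is simply a citation: ``See~\cite[p.11]{L01}.'' Your proposal therefore goes considerably further than the paper does, and the parts you actually prove are correct.

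Your argument for the geometric--sequence claim is clean and complete: applying the cross rule $S_{m,n}^2 = S_{m+1,n}S_{m-1,n} + S_{m,n+1}S_{m,n-1}$ at each non--corner inner--frame entry, the term involving the neighbour inside the window drops out, leaving $A_k^2 = A_{k-1}A_{k+1}$, and the non--vanishing of frame entries is indeed forced by Theorem~\ref{themzeroentries} (a zero on the frame would enlarge the connected zero region beyond a $(\delta-1)\times(\delta-1)$ square). The corner identifications giving $(PS/QR)^\delta = 1$ are also correct, and your direct verification of the case $\delta = 2$ via the cross rule at the unique zero is exactly right.

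For $\delta \ge 3$ you correctly observe that the cross rule degenerates to $0 = 0$ inside the window, sketch the Sylvester/Dodgson approach that reaches across the window, and then invoke~\cite{L01} for the sign bookkeeping. Since the paper itself defers the entire statement to that reference, your treatment is strictly more informative than the paper's while landing on the same citation for the one genuinely delicate step. There is no gap relative to the paper; if anything, you have supplied the proof the paper omits for the easier half of the theorem.
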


\begin{proof}
See~\cite[p.11]{L01}.
\end{proof}

\begin{coro}\label{coroinnerframe}
With the notation of Figure~\ref{[origwind]} and Theorem~\ref{innerframe}, the inner frame sequences (denoted by  $A_k$, $B_k$, $C_k$ and $D_k$) satisfy the relation $$\frac{A_kD_k}{B_kC_k}\,=\, \left(-1\right)^{(\delta-1)k}$$ for any $0\le k\le \delta$.
\end{coro}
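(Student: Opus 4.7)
The plan is to read off the four inner-frame sequences explicitly from Theorem \ref{innerframe} and Figure \ref{[origwind]}, and then simply compute the ratio. The corollary is essentially a bookkeeping consequence of Theorem \ref{innerframe} once the corner conventions are made precise.

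First I would record the two corner coincidences that are visible in Figure \ref{[origwind]}: the top-left entry of the inner frame is simultaneously $A_0$ and $B_0$, so that $A_0 = B_0$, and the bottom-right entry is simultaneously $C_0$ and $D_0$, so that $C_0 = D_0$. Theorem \ref{innerframe} asserts that the top edge and the left edge are geometric with ratios $P$ and $Q$, both starting from the top-left corner, while the right edge and the bottom edge are geometric with ratios $R$ and $S$, both starting from the bottom-right corner. Together these give
\[
A_k = A_0\, P^k, \qquad B_k = A_0\, Q^k, \qquad C_k = D_0\, R^k, \qquad D_k = D_0\, S^k,
\]
for all $0 \le k \le \delta$.

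Next I would perform the ratio computation directly:
\[
\frac{A_k D_k}{B_k C_k} \;=\; \frac{A_0 D_0\,(PS)^k}{A_0 D_0\,(QR)^k} \;=\; \left(\frac{PS}{QR}\right)^{\!k} \;=\; (-1)^{(\delta-1)k},
\]
where the last equality is the identity $PS/(QR) = (-1)^{\delta - 1}$ supplied by Theorem \ref{innerframe}.

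There is no real obstacle to this argument; the only point that deserves care is matching each ratio $P, Q, R, S$ to the correct sequence with respect to the correct origin, which is the content of the first paragraph. As a sanity check, the formula behaves correctly at the extremes: when $k = 0$ both sides equal $1$ by $A_0 = B_0$ and $C_0 = D_0$, and when $k = \delta$ the corner identifications $A_\delta = C_\delta$ and $D_\delta = B_\delta$ force the left-hand side to be $1$, matching $(-1)^{(\delta-1)\delta} = 1$ since $(\delta-1)\delta$ is even for every integer $\delta$.
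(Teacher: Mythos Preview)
Your proof is correct and is exactly the natural derivation of the corollary from Theorem~\ref{innerframe}: once the inner-frame sequences are written as $A_k=A_0P^k$, $B_k=A_0Q^k$, $C_k=D_0R^k$, $D_k=D_0S^k$ using the corner identifications $A_0=B_0$ and $C_0=D_0$ visible in Figure~\ref{[origwind]}, the ratio collapses to $(PS/QR)^k=(-1)^{(\delta-1)k}$. The paper itself does not spell this out but simply cites~\cite[p.~11]{L01}; your argument is precisely the computation that citation points to, so there is nothing to add or correct.
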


\begin{proof}
See~\cite[p.11]{L01}.
\end{proof}

\begin{thm}\label{outerframe}
With the notation of Figure~\ref{[origwind]} and Theorem~\ref{innerframe}, the outer frame sequences (denoted by $E_k$, $F_k$, $G_k$ and $H_k$) lying immediately outside the inner frame sequences (denoted by  $A_k$, $B_k$, $C_k$ and $D_k$ respectively)  and aligned with them satisfy the relation $$\frac{QE_k}{A_k}+\left(-1\right)^k\frac{PF_k}{B_k}\, =\, \frac{RH_k}{D_k} + \left(-1\right)^k\frac{SG_k}{C_k}$$ for $0\le k \le \delta$.
\end{thm}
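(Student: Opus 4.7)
\emph{Strategy.} I would apply the Desnanot--Jacobi identity of the preceding theorem at the four inner-frame corners, and then bridge to interior indices by a Sylvester-type determinantal identity that exploits the vanishing of the window block.

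\emph{Corner cases ($k = 0$ and $k = \delta$).} Applied at the top-left corner $A_0 = B_0$, the Desnanot--Jacobi identity reads $A_0^2 = E_0 B_1 + F_0 A_1$. Substituting $B_1 = Q A_0$ and $A_1 = P A_0$ from Theorem~\ref{innerframe} and dividing by $A_0$ yields $A_0 = Q E_0 + P F_0$. Analogous applications at the three remaining corners produce $C_0 = R H_0 + S G_0$, $P R \cdot A_\delta = P E_\delta + R G_\delta$, and $Q S \cdot B_\delta = Q F_\delta + S H_\delta$. Combining the first two (using $A_0 = B_0$ and $C_0 = D_0$) gives the $k = 0$ instance of the identity, with both sides equal to $1$. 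Combining the latter two while invoking the ratio constraint $PS/QR = (-1)^{\delta - 1}$ from Theorem~\ref{innerframe} gives the $k = \delta$ instance, with both sides equal to $(-1)^{\delta - 1} P S$; the critical cancellation is precisely forced by that constraint on the ratios.

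\emph{Interior cases ($1 \leq k \leq \delta - 1$).} A direct application of Desnanot--Jacobi at the interior inner-frame entry $A_k$ only recovers the geometric progression $A_k^2 = A_{k-1} A_{k+1}$, because two of its four neighbors lie inside the window and therefore vanish. To reach across the window, I would invoke a generalized Sylvester-type identity on a bordered Toeplitz submatrix that contains all four outer-frame entries $E_k, F_k, G_k, H_k$ together with the zero interior block; the vanishing of that block forces massive cancellation in the Sylvester expansion, and the surviving terms, once divided by $A_k B_k C_k D_k$ and simplified via Corollary~\ref{coroinnerframe}, reassemble into the claimed identity. The alternating sign $(-1)^k$ arises from the $(-1)^{m(m+1)/2}$ factor linking Toeplitz and Hankel determinants, combined with the sign formula of Corollary~\ref{coroinnerframe}.

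The principal technical obstacle is the interior case: while the corner cases amount to essentially routine bookkeeping with Desnanot--Jacobi, reaching across the window demands identifying precisely the correct higher-order determinantal identity and carefully tracking the signs through its collapse in the presence of the zero block. Consistency of the interior identity at the endpoints $k = 0$ and $k = \delta$ with the corner calculations serves as a useful sanity check and closes the argument.
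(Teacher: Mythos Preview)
The paper does not actually prove this theorem: it simply cites \cite[p.~11]{L01}. So there is no proof in the paper to compare against, and your proposal must be judged on its own merits.

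Your corner cases are sound in spirit. The $k=0$ computation is correct: Desnanot--Jacobi at $A_0=B_0$ gives $A_0 = QE_0 + PF_0$, and likewise $C_0 = RH_0 + SG_0$ at the opposite corner, so both sides of the claimed identity equal $1$. For $k=\delta$ your two corner identities $PR\cdot A_\delta = PE_\delta + RG_\delta$ and $QS\cdot B_\delta = QF_\delta + SH_\delta$ are also correct, and combining them with $PS/QR = (-1)^{\delta-1}$ does yield the desired equality. However, your claim that at $k=\delta$ ``both sides equal $(-1)^{\delta-1}PS$'' is wrong: the outer frame entries $E_\delta,F_\delta,G_\delta,H_\delta$ are not determined by the inner frame alone, so neither side has a fixed value. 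What the corner identities give you is only the \emph{equality} of the two sides, obtained by eliminating (say) $E_\delta$ and $H_\delta$ between the two relations and invoking the ratio constraint.

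The interior case $1\le k\le \delta-1$ is where your proposal has a genuine gap. You describe invoking ``a generalized Sylvester-type identity on a bordered Toeplitz submatrix'' and assert that ``the surviving terms \dots\ reassemble into the claimed identity'', but you do not identify which identity, on which submatrix, or how the terms actually collapse. This is the entire content of the theorem; the corner cases are essentially warm-ups. Without specifying the determinantal identity and carrying through the computation, this part of the proposal is a statement of hope rather than a proof. The reference \cite{L01} does carry this out (via an extension-of-identities argument), and if you want a self-contained proof you will need to do likewise.
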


\begin{proof}
See~\cite[p.11]{L01}.
\end{proof}

The relations above constitute the set of \emph{frame constraints} of a Number Wall. They can be rephrased all together as follows:

\begin{coro}[Frame Constraints]\label{frameconstraints}
Given a doubly infinite sequence $\left(\theta_n\right)_{n\in\Z}$ over a ground field $\K$, its Number Wall $S=\left(S_{m,n}\right)_{m,n\in\Z}$ is generated by a recurrence 
in row $m\in\Z$ in terms of the previous rows. More precisely, with the notation of Figure~\ref{[origwind]} and Theorem~\ref{innerframe}, given $m,n\in\Z$,
\begin{equation*}
S_{m,n} =
  \begin{cases}
      0 & {\rm if\ } m < -1 ; \\
      1 & {\rm if\ } m = -1 ; \\
      \theta_n & {\rm if\ } m = 0 ; \\
	0 & {\rm if\ } (m,n) {\rm \ is \ within \ a \ window}; \\
      \cfrac{\left(S_{m-1,n}^2 - S_{m-1,n+1} S_{m-1,n-1}\right)} {S_{m-2,n}}
          & {\rm if\ } m > 0 \ \&\ S_{m-2,n} \ne 0 ; \\
      D_k = \cfrac{(-1)^{(\delta-1)k} B_k C_k}{A_k}
          & {\rm if\ } m > 0 \ \&\ S_{m-2,n} = 0 \ \&\ S_{m-1,n} = 0 ; \\
      H_k = \cfrac{Q E_k/A_k + (-1)^k P F_k/B_k - (-1)^k S G_k/C_k}{R/D_k}  & {\rm if\ } m > 0 \ \&\ S_{m-2,n} = 0 \ \&\ S_{m-1,n} \ne 0.
    \end{cases}
\end{equation*}
(In the last two equations above, the index $k$ is determined in the natural way from $m,n$ and $\delta$.)

Conversely, an array 
satisfying this recurrence is the Number Wall of the sequence determined by $\theta_n=S_{0,n}$ for all $n\in\Z$.
\end{coro}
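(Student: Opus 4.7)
The plan is to verify each of the six cases of the recurrence separately, observing that this Corollary repackages the preceding theorems of Section \ref{defpropnbwall} rather than containing any new content; the forward direction is then a case analysis, and the converse is a short uniqueness argument.

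The three base cases ($m<-1$, $m=-1$, $m=0$) are immediate from the definition of the Number Wall and the boundary conventions adopted after \eqref{defnumberwall}. For the generic case $m>0$ with $S_{m-2,n}\ne 0$, I would apply the Desnanot--Jacobi identity at position $(m-1,n)$ rather than at $(m,n)$, obtaining $S_{m-1,n}^2 = S_{m,n}S_{m-2,n} + S_{m-1,n+1}S_{m-1,n-1}$, and then solve for $S_{m,n}$; the hypothesis $S_{m-2,n}\ne 0$ makes this division legitimate.

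The two remaining cases both assume $S_{m-2,n}=0$. By Theorem \ref{themzeroentries}, the zero entry $(m-2,n)$ must lie inside a unique window of some deficiency $\delta$, and the relative position of $(m,n)$ with respect to this window then determines the index $k$ appearing in the formulae in the natural geometric manner indicated by Figure \ref{[origwind]}. If additionally $S_{m-1,n}=0$, then $(m,n)$ lies either still inside the window (and is therefore also zero) or immediately below its bottom zero row, that is, on the bottom inner frame at position $D_k$; the identity $D_k=(-1)^{(\delta-1)k}B_kC_k/A_k$ is then precisely the content of Corollary \ref{coroinnerframe}. If instead $S_{m-1,n}\ne 0$, then $(m,n)$ sits on the outer frame just below the bottom inner frame, at position $H_k$, and the required formula is obtained by isolating $H_k$ in the identity of Theorem \ref{outerframe}.

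The converse direction follows by uniqueness: given the initial row $S_{0,n}=\theta_n$ and the boundary conventions on rows $m=-1$ and $m<-1$, the recurrence determines every subsequent row unambiguously, so any array that satisfies the recurrence must coincide with the Number Wall of $(\theta_n)_{n\in\Z}$, which itself satisfies the recurrence by the forward direction. The principal obstacle I foresee is the bookkeeping around windows: one must make precise the ``natural'' identification of the index $k$ from $(m,n)$ and $\delta$, treat the corners of a window (where two inner frame labels coincide, as in Figure \ref{[origwind]}) consistently, and verify that entries strictly inside a window are correctly produced as zero rather than being assigned a spurious non-zero value through the inner frame formula.
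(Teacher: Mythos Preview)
Your proposal is correct and follows essentially the same approach as the paper's own proof, which is a one-liner: ``This follows immediately from Theorem~\ref{innerframe}, Corollary~\ref{coroinnerframe} and Theorem~\ref{outerframe},'' supplemented by a remark that the window parameters $k,\delta,P,Q,R,S$ are recoverable from the previously computed rows. Your case-by-case verification is simply an explicit unpacking of that sentence, and your uniqueness argument for the converse is exactly the intended one (the paper leaves it implicit).

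The bookkeeping concern you flag---that the fifth clause must be understood to produce $0$ when $(m,n)$ lies strictly inside the window rather than on the bottom inner frame---is genuine and is handled in the paper not in this proof but in the algorithmic restatement (Figure~\ref{fig:algo1}), where a separate ``inside a zero block'' branch is made explicit. The paper's proof sidesteps the issue by noting in the statement that the recurrence expresses the \emph{non-zero} elements; you are right that a fully rigorous write-up would spell this out.
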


\begin{proof}
This follows immediately from Theorem~\ref{innerframe}, Corollary~\ref{coroinnerframe} and Theorem~\ref{outerframe}. Note that all denominators are guaranteed to be non--zero. Also note that the parameters $k$, $\delta$, $P, Q, R$ and $S$ required for these equations to be a recurrence relation are well--determined from previous rows when not within a window (cf. Figure \ref{fig:algo1} below for the actual algorithm). 
For example, a string of consecutive zeros appearing in a row by application of the equation in the fourth line provides the value of the deficiency $\delta$ of the window and thus of the parameters $P, Q$ and $R$ (e.g., by application of the same equation to compute the vertical entries surrounding the window). Theorem~\ref{innerframe} then gives the value of the parameter $S$.
\end{proof}

The Frame Constraints expressed in Corollary~\ref{frameconstraints} provide a necessary and sufficient condition for an infinite array to be the Number Wall of a sequence. They also give a \emph{Wall Builder Algorithm} for generating finite segments of a given Number Wall as well as an alternative method for verifying its properties. This algorithm is more efficient than a direct use of the definition of a Number Wall from Toeplitz determinants as in~\eqref{defnumberwall}. 
Indeed, a key feature of the Frame Constraints is that in the case of sequences with bounded deficiency, they are \emph{local}. Explicitly, this means that if the sequence in the zeroth row $\left\{S_{0,n}\right\}_{n\in\bbz}$ has deficiency $\delta\ge 1$, then the entry $S_{m,n}$ on row $m$ and column $n$ can be obtained from at most  $O\left(\delta^2\right)$ other entries sitting in previous rows, which number of entries is independent of $m$ and $n$ (see Section~\ref{buildingfinite} below for further details).
By contrast, Definition~\ref{defnumberwall} is global in the sense that $S_{m,n}$ depends on $2m+1$ entries on row 0;  namely, on $S_{0,n'}$ with $\left|n'-n\right|\leq m$.

\subsection{On the Number Wall of the Paper--Folding Sequence over $\F_3$}\label{strategy}

Extend the Paper--Folding sequence to a doubly infinite sequence $\left(f_n\right)_{n\in\Z}$ by setting $f_0=0$ and by defining $f_n$ for $n<0$
via formula~\eqref{rpfseq}, where the integer $k$ is then negative (in other words, $f_{-n}=1-f_n$ for any $n\neq 0$).
	
Note that $f_{0} = f_{1}  = f_{2}=0$. Consider then  the $3\times 3$ Hankel matrix  formed from these three values together with $f_{3}=1$ and $f_{4}=0$; that is, consider the matrix
\begin{equation*}
	\begin{pmatrix}
		0 &  0 &  0 \\
		0 &  0 &  1\\
		0 &  1 &  0
	\end{pmatrix}.
\end{equation*}	

Clearly, all three principal minors of this matrix vanish, which shows that the Paper--Folding sequence has deficiency at least 4. The remainder of the proof will thus consist in proving that the Number Wall of this sequence contains no $4\times 4$ zero block.

Evidence for this conjecture is provided by Figure~\ref{[wallseg]} below, which represents the portion of the Number Wall $\left(S_{m,n}\right)_{m,n\in\Z}$ under consideration in the ranges of indices $-2\le m \le 39$ and $-41\le n \le 41$ (the squares and circles in the figure will be interpreted later).
\par

\begin{figure}[htb!]
  \centering
	\includegraphics[scale=0.55]{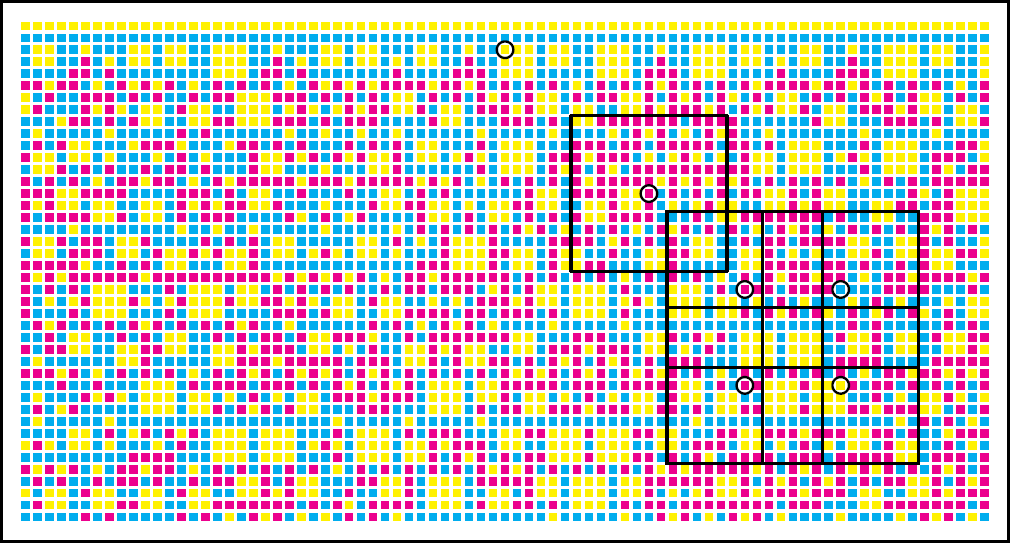}
  \caption{The Paper--Folding Number Wall over $\F_3$ in the ranges $-2\le m\le 39 $ and $-41\le n\le 41$.  Yellow (resp.~blue, pink) entries equal  0 (resp.~1, 2).
}
  \label{[wallseg]}
\end{figure}

The Paper--Folding sequence is well--known to be 2--automatic. From a theorem by Cobham, this amounts to claiming that it is the image, under a coding, of a fixed point of a 2--substitution (see~\cite[\S 6.3]{AS03} for proofs and definitions). The idea of the proof of Theorem~\ref{thmderive} is that such a rich structure in the sequence should be reflected in its Number Wall. With this in mind, the strategy to establish that the Number Wall of the Paper--Folding sequence over $\F_3$ admits no window with deficiency bigger than 4 can be described as follows:

\begin{itemize}
\item Build a suitably large segment of the Paper--Folding sequence and a large portion of its Number Wall;
\item Construct a 2--dimensional substitution and a coding such that the generated tiles cover the portion of the Number Wall under consideration;
\item Consider the infinite tiling obtained by these substitution and coding, and show that it is a valid Number Wall (in other words, that it satisfies the Frame Constraints);
\item Check that the sequence generating the Number Wall thus obtained is the Paper--Folding sequence by showing that this sequence sits in row $m=0$.
\end{itemize}

Each of these steps will be detailed in the next sections. Beforehand, some lemmata related to the theory of tilings are proved. They will be needed to implement the above--described strategy.

\section{Tilings of $\mathbb{Z}^d$}\label{sec:tilings}

Let $\Sigma$ be a set. Its elements will be referred to as \emph{tiles}. Fix once and for all an integer $d\geq1$. A \emph{tiling} of $\mathbb{Z}^d$ (resp.~of $\mathbb{N}^d$) over $\Sigma$ is a function $T:\bbz^d\to\Sigma$ (resp.~a function $T:\mathbb{N}^d\to \Sigma$). Referring to a tiling without referencing $\bbz^d$ or $\bbn^d$ will mean either.

This section partially follows~\cite{AS03} in order to recall a special type of tilings that are generated by a finite set of tiles, a substitution rule that allows one to replace tiles with squares of tiles, and a coding function from the set of tiles to a possibly different set of tiles.
Several equivalent characterisations of such tilings are proved in~\cite{oS87}. In particular, it is shown therein that this may be taken as a definition for an \emph{automatic tiling}, some properties of which are established in this section. These properties will enable one to show in Section \ref{sec:putative} that the Number Wall of the Paper--Folding sequence is an automatic $\bbz^2$--tiling, which will be used to establish Theorem \ref{thmderive}.

Some notation is first introduced.  Boldface letters such as $\bn$ will denote vectors whose coordinates $(n_1,\ldots,n_d)$ are integers or positive integers, as should be clear from the context.  
Let 
$\lceil x\rceil$ denote the ceiling of a real number $x$ and set $\lceil x\rceil_+=\max\left\{0, \lceil x\rceil\right\}$. Given integers $n\in\Z$ and $l\ge 1$, let $[n]_l$ be the representative in $\{1, \dots, l\}$ of $n$ modulo $l$. These pieces of notation are extended  in the natural way to any integer vector $\bm{n}= \left(n_1,\ldots, n_d\right)$, viz.
\begin{align*}
\left\lceil \bm{n}\right\rceil=\left(\lceil n_1\rceil,\ldots, \lceil n_d\rceil\right), \quad \left\lceil \bm{n}\right\rceil_+=\left(\lceil n_1\rceil_+,\ldots, \lceil n_d\rceil_+\right)  \qquad \textrm{ and }  \quad \left[\bm{n}\right]_l = \left(\left[ n_1\right]_l,\ldots, \left[ n_d\right]_l\right).
\end{align*}
Also, the integer vector $\bm{0}$ (resp.~$\bone$ , $\btwo$, $\bm{3}$) will stand for the vector all of whose components are equal to 0 (resp. equal to 1, to 2, to 3). Lastly, the notation $\bl$ (resp.~$\br$, $\br'$) will be reserved to denote the vector all of whose components are equal to a given integer $l$ (resp.~to a given integer $r$, $r'$). In each of these cases, the dimension of the vectors under consideration will be clear from the context.

\subsection{Substitution Tilings}\label{sub:subs}

Only tilings arising from a special type of substitutions will be required:

\begin{defn}\label{def:subs}
Let $\Sigma$ be a set of tiles and let $k\geq2$ be an integer. A \emph{$k$--substitution} is a map $\varphi: \Sigma\to\Sigma^{\left\{1,\ldots ,k\right\}^d}$. A \emph{uniform substitution} is a $k$-substitution for some $k\geq2$.
\end{defn}

In the above definition, the set $\Sigma^{\left\{1,\ldots ,k\right\}^d}$ is the set of mappings from the set of $d$--tuples with integer entries between $1$ to $k$ to $\Sigma$. Thus, the $k$--substitution $\varphi$ maps each tile to a collection of $k^d$ tiles which can be seen as being arranged in the shape of a $d$--dimensional hypercube.

Given a substitution, one can construct a tiling by applying it again and again and ``stacking up'' shifts of the outcome. One way of doing so is introduced with the help of the following definition:

\begin{defn}\label{def:prolongable}
Assume that $\varphi$ is a $k$--substitution on $\Sigma$ (where $k\ge 2$). 
A tile $s\in\Sigma$ is said to be \emph{prolongable} if $s=\varphi(s)(\bone)$.
\end{defn}

Assume  that $\varphi$ is a $k$--substitution on $\Sigma$ and that $s\in\Sigma$ is prolongable. Define an $\bbn^d$--tiling by the recursive formula
\begin{equation}\label{eq:subsTiling1}
T(\bone)=s
\end{equation}
and
\begin{equation}\label{eq:subsTiling2}
T(\bn)=\varphi\left(T\left(\ceil*{\frac{\bm{n}}{k}}\right)\right)\left(\left[\bm{n}\right]_k\right)
\end{equation}
for any $\bn=\left(n_1,\ldots,n_d\right)\in\bbn^d$. This tiling will be denoted by $T_{\left(\varphi,s\right)}$.

To describe a similar construction of substitution tilings in $\bbz^d$,  split first $\bbz^d$ into \emph{orthants}: for any vector $\bo\in\left\{0,1\right\}^d$, the $\bo^{\textrm{th}}$ orthant of $\bbz^d$ is defined as
\[
\bbz_\bo=\left\{\bn\in\bbz^d \sep n_i\leq0 \;\iff\; o_i=0\right\}.
\]
The following definition extends the above notion of prolongability within orthants of $\bbz^d$:

\begin{defn}\label{def:oProlongable}
Assume that $\varphi$ is a $k$--substitution on $\Sigma$ (with $k\ge 2$). For any vector $\bo\in\left\{0,1\right\}^d$, a tile $s\in\Sigma$ is said to be \emph{$\bo$--prolongable} if $s=\varphi(s)\left(\left[o_1\right]_k,\ldots ,\left[o_d\right]_k\right)$ (recall that $[0]_k=k$).
\end{defn}

Given a $2^d$--tuple $\bs=\left(s_\bo\in\Sigma \sep \bo\in\left\{0,1\right\}^d\right)$ such that $s_\bo$ is $\bo$--prolongable for $\varphi$ 
for any $\bo\in\left\{0,1\right\}^d$, define recursively a $\bbz^d$--tiling $T$
with the help of~\eqref{eq:subsTiling2} and of the following extension of the initial condition~\eqref{eq:subsTiling1}: for every 
$\bo\in\left\{0,1\right\}^d$,
let

\begin{equation}\label{eq:subsTiling3}
T(\bo) = s_\bo.
\end{equation}
The $\Z^d$--tiling thus obtained will be denoted by $T_{(\varphi,\bs)}$.

\begin{example}[The Thue--Morse $\N^2$--tiling]
To get used to the above definitions and notation, consider the tiling introduced in \cite[p.20]{APWW98}. It is a substitution tiling of $\bbn^2$ over $\Sigma=\left\{0,1\right\}$ given by the $2$--substitution

\begin{equation*}
0 \mapsto
\begin{array}{cc}
0 &1 \\
1 & 0
\end{array} ,
\quad
1 \mapsto
\begin{array}{cc}
1 &0 \\
0 & 1
\end{array}
\end{equation*}
applied to the $(1,1)$--prolongable tile $0$.
A finite portion of the tiling $T_{(\varphi,0)}$ can be 
generated as follows:
\begin{equation*}
0\mapsto
\begin{array}{cc}
0 &1 \\
1 & 0
\end{array}
 \mapsto
\begin{array}{cccc}
0 &1 & 1 & 0 \\
1 & 0 & 0 & 1\\
1 & 0 & 0 & 1\\
0 & 1 & 1 & 0
\end{array}
 \mapsto
\begin{array}{cccccccc}
0 & 1 & 1 & 0 & 1 & 0 & 0 & 1\\
1 & 0 & 0 & 1 & 0 & 1 & 1 & 0\\
1 & 0 & 0 & 1 & 0 & 1 & 1 & 0\\
0 & 1 & 1 & 0 & 1 & 0 & 0 & 1\\
1 & 0 & 0 & 1 & 0 & 1 & 1 & 0\\
0 & 1 & 1 & 0 & 1 & 0 & 0 & 1\\
0 & 1 & 1 & 0 & 1 & 0 & 0 & 1\\
1 & 0 & 0 & 1 & 0 & 1 & 1 & 0
\end{array}
\mapsto \cdots
\end{equation*}
\end{example}
Recall here the convention that the positive directions for the vertical and horizontal axes are as for matrices; that is, downwards and rightwards, respectively.

\subsection{Coding of a Tiling}\label{sub:codings}
Given two sets of tiles $\Sigma$ and $\Delta$, a \emph{coding} from $\Sigma$ to $\Delta$ is a map $\tau: \Sigma\to\Delta$. It will be convenient to reinterpret this definition as a $1$--coding so as to fit a more general concept:
\begin{defn}\label{def:coding}
Let $\Sigma$ and $\Delta$ be sets and let $l\ge 1$ be an integer. An \emph{$l$--coding} from $\Sigma$ to $\Delta$ is a map
$\tau: \Sigma\to\Delta^{\left\{1,\ldots ,l\right\}^d}$.
\end{defn}

Given a tiling and a coding, another tiling can be generated by applying the coding to each of the tiles. Let $\Sigma$ and $\Delta$ be sets. Let $\tau$ be an $l$--coding from $\Sigma$ to $\Delta$, and let $T$ be any tiling over $\Sigma$. The image of $T$ under $\tau$ is the tiling over $\Delta$ denoted by $\tau(T)$ and defined as follows: for any integer vector $\bn$ in the domain of $T$,

\begin{equation}\label{eq:codedTiling}
\tau(T)(\bn)=\tau\left(T\left(\ceil*{\frac{\bn}{l}}\right)\right)\left(\left[\bn\right]_l\right).
\end{equation}

A tiling which is the image of a uniform substitution tiling under a $1$--coding is said to be an \emph{automatic tiling}. It can be shown that if $T$ is a uniform substitution and $\tau$ is any $l$--coding then $\tau(T)$ is automatic, i.e., there are a uniform substitution tiliing $T'$ and a $1$--coding $\tau'$ such that $\tau(T)=\tau'(T')$ (this can be proved, e.g., via \cite[Lemma 6.9.1]{AS03}). The reason for introducing Definition \ref{def:coding} is that it will enable the use of an efficient algorithm that detects automaticity.

\begin{example}\label{es:paperFolding}
The doubly--infinite Paper--Folding sequence introduced in \S3.2 is well--known to be generated by the $2$--substitution $\psi$ and the $1$--coding $\rho$ defined in Figure \ref{[dragmorf]} below,  applied to the $0$--prolongable and $1$--prolongable tiles $2$ and $0$, respectively (see \cite[Example 10.3.3]{AS03} for further details). With the previous notation, the Paper--Folding sequence is then $\rho\left(T_{(\psi,(2,0))}\right)$.

\begin{figure}[htb]
\centering
\begin{tabular*}{0.5\textwidth}{@{\extracolsep{\fill}}|ccccc|}
\hline
  & Tile & Substitution $\psi$ & Coding $\rho$ & \bigstrut \\
\hline
  & 0 & 0\ 2 & 0 &  \bigstrut[t] \\
  & 1 & 0\ 3 & 1 &  \\
  & 2 & 1\ 2 & 0 &  \\
  & 3 & 1\ 3 & 1 &  \bigstrut[t] \\
\hline
\end{tabular*}
\caption{A standard substitution and coding that generate the Paper--Folding sequence.}
\label{[dragmorf]}
\end{figure}
\end{example}

\subsection{Consistent Overlaps}\label{sub:overlaps}

In this subsection are proved two statements standing at the heart of our approach. To this end, some additional notation and definitions are first introduced. Given $\bj, \bmf\in\bbz^d$, denote by
\[
[\bj,\bmf]=\left\{\bn\in\bbz^d\sep j_i\leq n_i\leq m_i \text{ for every } 1\leq i \leq d \right\}
\]
the \emph{rectangular shape} with edges parallel to the coordinate axes and with opposite vertices $\bj$ and $\bmf$. It will also be convenient to introduce the following notation to exclude some border values of such a parallelepiped:
\[
(\bj,\bmf]=\left\{\bn\in\bbz^d\sep j_i< n_i\leq m_i \text{ for every } 1\leq i \leq d \right\}.
\]
Finally, define the multiplication of this set by a positive integer $r$ in the natural way:
\[
r\cdot (\bj,\bmf] = (r\bj,r\bmf].
\]

\begin{defn}\label{def:pattern}
Given a set of tiles $\Sigma$, a \emph{rectangular pattern} over $\Sigma$ is a map $P: \left[\bone,\bmf\right]\to\Sigma$ for some $\bmf\in\bbn^d$. In the case that $\left[\bone,\bmf\right] = \left\{1,\ldots ,r\right\}^d$ for some $r\in\N$, the map $P: {\left\{1,\ldots ,r\right\}^d}\to\Sigma$ is referred to as an \emph{$r$--pattern}.

The rectangular pattern $P: \left[\bone,\bmf\right]\to\Sigma$ is contained in $T$, which stands either for a tiling or for another rectangular pattern, if there exists $\bj$ such that
\[
P\left(\bn\right)=T(\bj+\bn)
\]
for every $\bn\in\left[\bone,\bmf\right]$.
\end{defn}

\begin{defn}\label{def:consistentCoding}
Given a tiling or a rectangular pattern $T$, given an $l$--coding $\tau$ and an integer $0\leq r<l$, the coding $\tau$ is said to be \emph{consistent with respect to an overlap of $r$} (or, for short, \emph{$r$--consistent}) if for every integer vector $\bn$, every index $1\leq i\leq d$ and every integer $l-r < r'\leq l$, one has that
\begin{equation}\label{eq:consistentCoding}
\tau\left(T\left(\bn\right)\right)\left(\cdot,\ldots,\cdot,r',\cdot,\ldots,\cdot\right)=
\tau\left(T\left(\bn+\be_i\right)\right)\left(\cdot,\ldots,\cdot,r'-(l-r),\cdot,\ldots,\cdot\right)
\end{equation}
whenever $\bn$ and $\bn+\be_i$ are in the domain of $T$.  In this equation, $r'$ and $r'-(l-r)$ appear in the $i^{\textrm{th}}$ coordinate, the equality is as functions of the remaining $d-1$ variables, and $\be_i$ stands for the  $i^{\textrm{th}}$ vector of the standard basis of $\bbr^d$.
\end{defn}

Figure~\ref{consistentov} illustrates the concept of a coding with overlap.

\begin{figure}[h!]
\begin{center}
\begin{tikzpicture}[scale=0.75]
\fill[gray!20] (-3,0.5) -- (-0.5,0.5) -- (-0.5,3) -- (0.5,3) -- (0.5, 0.5)--(3, 0.5)--(3, -0.5)--(0.5, -0.5)--(0.5,-3)--(-0.5,-3)--(-0.5,-0.5)--(-3, -0.5)--cycle;

\draw [very thick] (-3, 3) -- ++(6,0) -- ++(0,-6) --++ (-6,0) -- cycle;
\draw [very thick] (-0.5,3) -- (-0.5,-3) ;
\draw [very thick] (0.5,3) -- (0.5,-3) ;
\draw [very thick] (-3,0.5) -- (3,0.5) ;
\draw [very thick] (-3,-0.5) -- (3,-0.5) ;

\draw [very thick]  (1.25,1.25) circle (0.1) ;
\draw [very thick]  (-1.25,1.25) circle (0.1) ;
\draw [very thick]  (1.25,-1.25) circle (0.1) ;
\draw [very thick]  (-1.25,-1.25) circle (0.1) ;

\draw[<->,>=latex] (-3, 3.5) -- (0.5,3.5);
\draw (-1.25,3.7) node[above]{$l$} ;
\draw[<->,>=latex] (0.5, -3.5) -- (3,-3.5);
\draw (1.75,-3.7) node[below]{$l-r$} ;
\draw[<->,>=latex] (-3.5, -0.5) -- (-3.5, 0.5);
\draw (-3.7, 0) node[left]{$r$} ;

\end{tikzpicture}
\caption{Schematic representation in the plane of an $l$--coding $\tau$ which is $r$--consistent with respect to an underlying tiling $T$. The circles represent four adjacent tiles in the tiling $T$ to which the coding $\tau$ is applied. The shaded regions correspond to the areas of overlap between two adjacent $l\times l$ squares representing the images under $\tau$ of these tiles.}
\label{consistentov}
\end{center}
\end{figure}

If $T$ is a tiling over $\Sigma$ and $\tau$ an $l$--coding which is $r$--consistent, define an $(l-r)$--coding $\tau_r$ by setting
\begin{equation}\label{eq:defnTauTag}
\tau_r(s)=\tau(s)|_{\left\{1,\ldots,l-r\right\}^d}
\end{equation}
for every $s\in\Sigma$. The resulting tiling $\tau_r(T)$ will be referred to as a \emph{tiling with $l$--coding and overlap $r$}.

For any $\bj\in\{0,\ldots,r\}^d$, let $\tau_{(r,\bj)}$  be the $(l-r)$--coding defined for a given $s\in\Sigma$ as a map $$\tau_{(r,\bj)}(s):\left\{1,\ldots ,l-r\right\}^d\to\Delta$$ such that
\begin{equation}\label{eq:deftaurj}
\tau_{(r,\bj)}(s)(\bmf)=\tau(s)\left(\bj + \bmf\right)
\end{equation}
for all $\bmf\in \left\{1,\ldots ,l-r\right\}^d.$

\begin{lem}\label{eq:shiftedCoding}
With the above notation and definitions, the tiling $\tau_{(r,\bj)}(T)$ satisfies the relation
\[
\tau_{(r,\bj)}(T)(\bn) = \tau_r(T)(\bj + \bn)
\]
for any integer vector $\bn$ such that both of these quantities are well--defined.
\end{lem}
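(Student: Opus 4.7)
The proof is a direct verification that unfolds both sides of the claimed identity into expressions involving $\tau$, $T$, and the Euclidean division modulo $l-r$, and then applies the consistency condition one coordinate at a time.

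First I would use \eqref{eq:codedTiling} applied to the $(l-r)$--coding $\tau_{(r,\bj)}$ together with the defining equation \eqref{eq:deftaurj} in order to rewrite the left--hand side as
\[
\tau_{(r,\bj)}(T)(\bn) \,=\, \tau\!\left(T\!\left(\ceil*{\frac{\bn}{l-r}}\right)\right)\!\left(\bj+[\bn]_{l-r}\right),
\]
and, similarly, \eqref{eq:codedTiling} together with \eqref{eq:defnTauTag} in order to rewrite the right--hand side as
\[
\tau_r(T)(\bj+\bn) \,=\, \tau\!\left(T\!\left(\ceil*{\frac{\bj+\bn}{l-r}}\right)\right)\!\left([\bj+\bn]_{l-r}\right).
\]

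Next I would examine the discrepancy between these two expressions coordinate by coordinate. For each index $i$, write $n_i = (l-r)(q_i-1)+p_i$ with $p_i=[n_i]_{l-r}\in\{1,\ldots,l-r\}$ and $q_i=\lceil n_i/(l-r)\rceil$. Two cases arise: if $j_i+p_i\leq l-r$, then $\lceil (j_i+n_i)/(l-r)\rceil=q_i$ and $[j_i+n_i]_{l-r}=j_i+p_i$, so the two sides match in the $i^{\textrm{th}}$ slot; if instead $j_i+p_i>l-r$ (which, using $j_i\leq r$ and $p_i\leq l-r$, is equivalent to $l-r<j_i+p_i\leq l$), then $\lceil (j_i+n_i)/(l-r)\rceil=q_i+1$ and $[j_i+n_i]_{l-r}=j_i+p_i-(l-r)$, so that the tile index is shifted by $+\be_i$ while the $i^{\textrm{th}}$ slot value is shifted by $-(l-r)$.

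Let $I_B\subseteq\{1,\ldots,d\}$ denote the set of indices falling into the second case and set $\mathbf{q}=\lceil\bn/(l-r)\rceil$. The claim then reduces to the identity
\[
\tau\!\left(T(\mathbf{q})\right)\!\left(\bj+[\bn]_{l-r}\right) \,=\, \tau\!\left(T\!\left(\mathbf{q}+\sum_{i\in I_B}\be_i\right)\right)\!\left([\bj+\bn]_{l-r}\right),
\]
which I would establish by induction on $|I_B|$. At each inductive step, for some $i\in I_B$, one applies the consistency identity \eqref{eq:consistentCoding} with $r':=j_i+p_i$, which lies strictly between $l-r$ and $l$ as required; this yields equality as functions of the remaining $d-1$ slot variables, so iterating over all indices of $I_B$ converts the left--hand side into the right--hand side.

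I do not foresee any serious obstacle: the lemma is essentially a bookkeeping exercise about Euclidean division combined with the local compatibility built into the definition of $r$--consistency. The only subtlety is to apply \eqref{eq:consistentCoding} successively across the coordinates of $I_B$ while tracking which slot variables have already been shifted, but since consistency in each coordinate is stated as an equality of functions of the remaining $d-1$ variables, the iteration goes through without ambiguity. The well--definedness hypothesis on $\bn$ and $\bj+\bn$ guarantees that every tile and slot encountered along the way lies in the appropriate domain.
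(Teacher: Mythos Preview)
Your approach is the same as the paper's: unfold both sides via~\eqref{eq:codedTiling}, \eqref{eq:defnTauTag} and~\eqref{eq:deftaurj}, and then apply the $r$--consistency relation coordinate by coordinate. There is, however, a small bookkeeping slip in your case analysis. From $j_i\le r$ and $p_i\le l-r$ you correctly deduce $j_i+p_i\le l$, but you then assert that $\lceil(j_i+n_i)/(l-r)\rceil=q_i+1$ and $[j_i+n_i]_{l-r}=j_i+p_i-(l-r)$ whenever $j_i+p_i>l-r$. This is only true when additionally $j_i+p_i\le 2(l-r)$, which need not hold if $2r>l$; in that regime the ceiling may exceed $q_i+1$, and a single application of~\eqref{eq:consistentCoding} in each coordinate of $I_B$ does not reach the right--hand side.

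The fix is immediate: simply iterate~\eqref{eq:consistentCoding} as many times as needed in each coordinate (each step is licit because the running slot value, being at most $l$ and strictly greater than $l-r$ by hypothesis, stays in $(l-r,l]$ until it drops into $\{1,\ldots,l-r\}$). The paper organises this by also decomposing $\bj=\bj'(l-r)+\bj''$ and $\bn''+\bj''=\mathbf{u}(l-r)+\mathbf{v}$, and then shifting by the combined amount $\bj'+\mathbf{u}$ in a single line invoking~\eqref{eq:consistentCoding}. For the specific parameters used later in the paper ($l=13$, $r=5$) one has $2r<l$, so your two--case split would in fact suffice there; but the lemma as stated allows arbitrary $0\le r<l$.
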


\begin{proof}
Decompose the vectors $\bn$ and $\bj$ as $$\bn = \bn'(l-r)+\bn'' \quad \textrm{and} \quad \bj=\bj'(l-r)+\bj'',$$ where each of the components of the integer vectors $\bn''$ and $\bj''$ lies between 1 and $l-r$. Decompose also the vector $\bn''+\bj''$ as $$\bn''+\bj'' = \mathbf{u}(l-r)+\mathbf{v},$$ where here also each of the components of the vector $\mathbf{v}$ lies between 1 and $l-r$.

Then,
\begin{align*}
\tau_{(r,\bj)}(T)(\bn) \; &\underset{\eqref{eq:codedTiling}}{=} \; \tau_{(r,\bj)}\left(T\left(\ceil*{\frac{\bn}{l-r}}\right)\right)(\left[\bn\right]_{l-r})\\
&\underset{\eqref{eq:deftaurj}}{=} \; \tau\left(T\left(\ceil*{\frac{\bn}{l-r}}\right)\right)(\left[\bn\right]_{l-r}+\bj)\\
&=\; \tau\left(T\left(\bn'+\bone\right)\right)\left(\mathbf{v}+(l-r)(\bj'+\mathbf{u})\right)\\
&\underset{\eqref{eq:consistentCoding}}{=}\;  \tau\left(T\left(\bn'+\bj'+\mathbf{u}+\bone\right)\right)\left(\mathbf{v}\right)\\
&=\; \tau_{r}\left(T\left(\ceil*{\frac{\bn+\bj}{l-r}}\right)\right)(\left[\bj+\bn\right]_{l-r})\\
&\underset{\eqref{eq:codedTiling}}{=}  \; \tau_r(T)(\bj + \bn).
\end{align*}

\end{proof}
Lastly, it will be useful to extend the definition of a coding with overlap to rectangular patterns:

\begin{defn}\label{def:consistentPattern}
Let $P$  be a rectangular pattern on $[\bone,\bmf]$ and let $\tau$ be an $r$--consistent $l$--coding (where $0\leq r < l$). The \emph{image of $P$ under the coding $\tau_r$} is the rectangular pattern $\tau_r(P)$ defined on 
\begin{equation}\label{eq:domain}
[\bone, (l-r)\bmf+\br]
\end{equation}  
by
\begin{equation}\label{eq:codedPattern}
\tau_r(P)(\bn) = \tau\left(P\left(\min\left\{\ceil*{\frac{\bn}{l-r}},\bmf\right\}\right)\right)\left(\left[\bn\right]_{l-r}+(l-r)\cdot \ceil*{\frac{\bn - (l-r)\bmf}{(l-r)}}_+\right).
\end{equation}
Here, the $\min$ function is applied to vectors coordinate--wise.
\end{defn}

Note that if the integer vector $\bn$ is decomposed uniquely as $$\bn=\bn' (l-r)+\bn''$$ with $1\le n_i''\le l-r$, then formula~\eqref{eq:codedPattern} can be rewritten more compactly as
\begin{equation*}
\tau_r(P)(\bn) = \tau\left(P\left(\min\left\{\bn'+\bone,\bmf\right\}\right)\right)\left(\bn''+(l-r)\cdot \ceil*{\bn'-\bmf+\bone}_+\right).
\end{equation*}

This definition is designed to satisfy the following property:

\begin{lemma}\label{prop:tauOfPattern}
If $T$ is a tiling and $\tau$ is an $l$--coding which is $r$--consistent, then for any rectangular pattern $P$ in $T$, it holds that $\tau_r(P)$ is contained in $\tau_r(T)$. More precisely, if $P$ is a pattern on $[\bone,\bmf]$ and $\bj$ is an integer vector such that $P(\bn)=T(\bj+\bn)$ for all $\bn\in [\bone,\bmf]$, then
\[
\tau_r(P)(\bn)=\tau_r(T)\left((l-r)\bj+\bn\right)
\]
for every $\bn\in\left[\bone,(l-r)\bmf+\br\right]$.
\end{lemma}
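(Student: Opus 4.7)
The approach is direct computation followed by an iterated application of the $r$-consistency condition. First I unfold both sides into the same canonical form. Given $\bn\in[\bone,(l-r)\bmf+\br]$, write uniquely $\bn=(l-r)\bn'+\bn''$ with $1\le n_i''\le l-r$, and set $\mathbf{u}=\lceil\bn'-\bmf+\bone\rceil_+$. Coordinatewise, $\min\{\bn'+\bone,\bmf\}=\bn'+\bone-\mathbf{u}$, while $(l-r)\bj$ is exactly divisible by $l-r$, so
\[
\Bigl\lceil\tfrac{(l-r)\bj+\bn}{l-r}\Bigr\rceil=\bj+\bn'+\bone, \qquad [(l-r)\bj+\bn]_{l-r}=\bn''.
\]
Applying \eqref{eq:codedTiling}, \eqref{eq:defnTauTag} and \eqref{eq:codedPattern}, together with the hypothesis $P(\bk)=T(\bj+\bk)$, the two sides of the claimed equality become
\[
\tau_r(P)(\bn)=\tau\bigl(T(\bj+\bn'+\bone-\mathbf{u})\bigr)\bigl(\bn''+(l-r)\mathbf{u}\bigr), \qquad \tau_r(T)((l-r)\bj+\bn)=\tau\bigl(T(\bj+\bn'+\bone)\bigr)(\bn'').
\]
Setting $\bm=\bj+\bn'+\bone-\mathbf{u}$, the problem thus reduces to establishing
\[
\tau(T(\bm))\bigl(\bn''+(l-r)\mathbf{u}\bigr)=\tau(T(\bm+\mathbf{u}))(\bn'').
\]

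The second paragraph would prove this last identity by telescoping the $r$-consistency relation \eqref{eq:consistentCoding}, handling one coordinate at a time. In any coordinate $i$ with $u_i=0$ the claim is trivial. If $u_i\ge 1$, applying \eqref{eq:consistentCoding} exactly $u_i$ times along $\be_i$ transfers, in the $i$th argument, the value $n_i''+(l-r)u_i$ to $n_i''$ while shifting the underlying tile by $u_i\be_i$; chaining these shifts across all coordinates yields the required equality. This step is valid provided that at every intermediate stage the index being read off lies in the admissible overlap window $(l-r,l]$ in the coordinate currently being processed, i.e.\ that $n_i''+j(l-r)\in(l-r,l]$ for $j=1,\dots,u_i$.

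The third paragraph would then verify these index bounds directly from the hypothesis $\bn\in[\bone,(l-r)\bmf+\br]$. The lower bound $n_i''+j(l-r)>l-r$ is automatic since $n_i''\ge 1$. The upper bound reduces to checking that $n_i''\le l-(l-r)u_i$ whenever $u_i\ge 1$. But $u_i\ge1$ forces $u_i=n_i'-m_i+1$, and the assumption $(l-r)n_i'+n_i''\le (l-r)m_i+r$ rearranges as
\[
n_i''\le r-(l-r)(u_i-1)=l-(l-r)u_i,
\]
which is exactly the required inequality. This is precisely why the target domain $[\bone,(l-r)\bmf+\br]$ in Definition~\ref{def:consistentPattern} has the peculiar shape $(l-r)\bmf+\br$: the extra thickness~$\br$ is the maximal overflow that $r$-consistency can legitimately absorb.

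I expect no conceptual obstacle: the lemma is essentially a book-keeping statement asserting that $\tau_r$ commutes with the operation of embedding a pattern into a tiling, the non-trivial content being already encapsulated in Definition~\ref{def:consistentCoding}. The only delicate point is notational, namely cleanly coordinating the three decompositions $\bn=(l-r)\bn'+\bn''$, $\min\{\bn'+\bone,\bmf\}=\bn'+\bone-\mathbf{u}$, and the iterated overlap shifts, without index confusion. Lemma~\ref{eq:shiftedCoding} could be used as a black box to streamline the argument, but for clarity a direct coordinatewise telescoping seems preferable.
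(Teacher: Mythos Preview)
Your proposal is correct and follows essentially the same route as the paper: unfold both sides via the standard division-with-remainder by $l-r$, then bridge the two expressions by iterating the consistency relation~\eqref{eq:consistentCoding} coordinatewise, checking that each intermediate index lies in the admissible window $(l-r,l]$. The only cosmetic difference is that the paper absorbs the overflow into a two-stage decomposition $\bn=(l-r)\bn'+\bk$, $\bk=(l-r)\bk'+\bk''$ with $\bn'+\bone\in[\bone,\bmf]$ forced from the start, whereas you use the canonical decomposition and track the overflow explicitly via $\mathbf{u}=\lceil\bn'-\bmf+\bone\rceil_+$; your $\mathbf{u}$ is precisely the paper's $\bk'$ and your $\bn''$ its $\bk''$, so the number of applications of~\eqref{eq:consistentCoding} and the verification of the bound $n_i''+(l-r)u_i\le l$ are identical.
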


\begin{proof}
Decompose any given $\bn\in\left[\bone,(l-r)\bmf+\br\right]$ as
\begin{equation}\label{decompos1}
\bn=\bn'(l-r)+\bk
\end{equation}
with $0 \leq n'_i\leq m_i-1$ and
\[
1\leq k_i\leq
    \begin{cases}
      l-r & {\rm if\ } n_i' < m_i-1, \\
      l  & {\rm if\ } n_i' = m_i-1. \\
      \end{cases}
\]
Furthermore, let
\begin{equation}\label{decompos2}
\bk=\bk'(l-r)+\bk''
\end{equation}
with $k'_i\geq 0$ and $1\leq k''_i\leq l-r$ for any $1\leq i\leq d$. Then

\begin{align*}
\tau_r(T)\left((l-r)\bj+\bn\right) \; &\underset{\eqref{eq:codedTiling}}{=}\; \tau_r\left(T\left(\bj + \ceil*{\frac{\bn}{l-r}}\right)\right)\left(\left[\bn\right]_{l-r}\right)\\
&= \,\tau\left(T\left(\bj + \bn' + \bk' + \bone\right)\right)\left(\bk''\right)
\\
&= \,\tau\left(T\left(\bj + \bn' + \bone\right)\right)\left(\bk\right)
\\
&= \,\tau\left(P\left(\bn' + \bone\right)\right)\left(\bk\right)
\\
&= \,\tau_r\left(P\right)\left(\bn\right).
\end{align*}

In these relations, the third equality follows upon applying identity~\eqref{eq:consistentCoding} $k_i'$ times in the $i^{\textrm{th}}$ coordinate for each $1\leq i\leq d$. The last equality follows from Definition~\ref{def:consistentPattern} and a direct calculation upon using decomposition~\eqref{decompos1} in each coordinate where $n'_i<m_i+1$ and upon using decompositions~\eqref{decompos1} and~\eqref{decompos2} in each coordinate where $n'_i=m_i-1$.
\end{proof}

Lemma~\ref{prop:tauOfPattern} enables one to state the first of the two fundamental results of this subsection:

\begin{lemma}\label{lem:consistentOverlap}
Assume that $T$ is a tiling over $\Sigma$ and that $\tau: \Sigma\to\Delta^{\{1,\ldots l\}^d}$ is an $l$--coding which is $r$--consistent.
Then for every integer $r' \geq 1$, every $r'$--pattern in $\tau_{r}(T)$ is contained in the image under the coding $\tau_{r}$ of some $s(l,r,r')$--pattern in $T$, where 
\begin{equation}\label{eq:slrr}
s(l,r,r') =1+\ceil*{\frac{r'-(r+1)}{l-r}}_+.
\end{equation} 
\end{lemma}

\begin{rem}
One could simplify  Definition \ref{def:consistentPattern} and the proofs of Lemmata \ref{prop:tauOfPattern} and \ref{lem:consistentOverlap} by replacing the domain \eqref{eq:domain} with a smaller one, namely, $[\bone, (l-r)\bmf]$. This would come at the cost of replacing \eqref{eq:slrr} with the possibly bigger number $1+\ceil*{\frac{r'-1}{l-r}}_+$. For the purpose of the current work, this would only have a small effect (see the proofs of Lemma \ref{lempatternzero4neg} and Theorem \ref{thm:frameConstraints} for the parameters that are required for the Paper--Folding Number Wall). 
\end{rem}

\begin{proof}
Assume that $P: \left\{1,\ldots ,r'\right\}^d\to \Delta$ is an $r'$--pattern in $\tau_r(T)$  and let $\mathbf{j}$ be an integer vector such that
\begin{equation}\label{defpntr}
P\left(\bn\right)\;=\;\tau_r(T)(\mathbf{j}+\bn)
\end{equation}
for every $\bn\in\left\{1,\ldots ,r'\right\}^d$. Decompose $\mathbf{j}$ as $$\mathbf{j}=\mathbf{j}'(l-r)+\mathbf{j}''$$ with $0\leq j''_i < l-r $ for every $1\leq i\leq d$. Let $P'$ be the $s(l,r,r')$--pattern in $T$ defined by
\[
P'(\bn) = T\left(\mathbf{j}' + \bn\right)
\]
for all $\bn\in\left\{1, \dots, s(l,r,r')\right\}^d$.

It then follows from Lemma~\ref{prop:tauOfPattern} (applied with the $d$-tuple $\bmf$ all of whose components are equal to $s(l,r,r')$) that
\[
\tau_r\left(P'\right)\left(\mathbf{j}''+\bn\right) \; = \; \tau_r\left(T\right)\left(\mathbf{j}+\bn\right) \; \underset{\eqref{defpntr}}{=}\; P\left(\bn\right)
\]
whenever
\[
\mathbf{j}''+\bn\in\left[\bone,(\bl-\br)\cdot s(l,r,r')+\br\right].
\]
This condition holds in the present case. Indeed, since  $\bn\in\left\{1,\ldots ,r'\right\}^d$ and  $j''_i<l-r$ for every $1\leq i\leq d$,
\[
j''_i + n_i \leq l -r - 1 + r' \leq l + (l-r)\cdot\ceil*{\frac{r'-(r+1)}{l-r}}_+ = (l-r)\left(1+\ceil*{\frac{r'-(r+1)}{l-r}}_+\right) + r,
\]
whence the claim from the definition of the quantity $s(l,r,r')$.

This concludes the proof of the lemma.
\end{proof}

Lemma \ref{lem:consistentOverlap} will be used in the proof of Theorem \ref{thmderive}. 
In order to apply it, one first needs to verify that a given coding is consistent. While this can be a hard task in general, the following statement provides an easy--to--check criterion in the case of substitution tilings:

\begin{lemma}\label{lem:completeTetrad}
Let  $T$ be a $k$--substitution tiling (where $k\ge 2$). Assume that there are integers $m_i\leq0 < M_i$ ($1\leq i \leq d$) such that $m_i=0$ if $T$ is an $\bbn^d$--tiling and such that $m_i<0$ if $T$ is a $\bbz^d$--tiling verifying the following condition:   every $2$--pattern contained in $T|_{k\left(\bmf,\bMf\right]}$ is already contained in $T|_{\left(\bmf,\bMf\right]}$.

Then every $2$--pattern in $T$ is already contained in $T|_{\left(\bmf,\bMf\right]}$.
\end{lemma}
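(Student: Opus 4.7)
The plan is to prove by induction on $n\ge 1$ the statement
\[
(\star_n):\ \text{every $2$--pattern in } T|_{k^n(\bmf,\bMf]}\text{ is already contained in } T|_{(\bmf,\bMf]}.
\]
The case $(\star_1)$ is precisely the hypothesis of the lemma. Once $(\star_n)$ has been established for every $n\ge 1$, the conclusion of the lemma follows at once, because the assumptions $m_i\le 0<M_i$ (with $m_i<0$ in the $\bbz^d$--case) imply that the boxes $k^n(\bmf,\bMf]$ exhaust the domain of $T$ as $n\to\infty$, so every $2$--pattern in $T$ sits in $T|_{k^n(\bmf,\bMf]}$ for $n$ sufficiently large.

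The inductive step $(\star_n)\Rightarrow(\star_{n+1})$ exploits the substitution rule $T(\mathbf{u})=\varphi(T(\lceil\mathbf{u}/k\rceil))([\mathbf{u}]_k)$. Given a $2$--pattern $P$ at a position $\bj\in[k^{n+1}\bmf,\,k^{n+1}\bMf-\btwo]$, define for each $\bn\in\{1,2\}^d$ the parent vector $\mathbf{i}(\bn):=\lceil(\bj+\bn)/k\rceil$, so that $P(\bn)=\varphi(T(\mathbf{i}(\bn)))([\bj+\bn]_k)$. An elementary computation shows that each $\mathbf{i}(\bn)$ lies in $k^n(\bmf,\bMf]$ and that the $\mathbf{i}(\bn)$'s form a sub-pattern supported on $\mathbf{i}(\bone)+\{0,1\}^S$, where $S:=\{p:j_p+1\equiv 0\pmod k\}$. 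This sub-pattern embeds into some $2$--pattern $Q$ contained in $T|_{k^n(\bmf,\bMf]}$, located near the position $\mathbf{i}(\bone)-\bone$ (with minor shifts explained below). Applying $(\star_n)$ to $Q$ produces a shifted copy of $Q$ in $T|_{(\bmf,\bMf]}$, hence new parents $\mathbf{i}'(\bn)\in(\bmf,\bMf]$ with $T(\mathbf{i}'(\bn))=T(\mathbf{i}(\bn))$ and $\mathbf{i}'(\bn)-\mathbf{i}'(\bone)=\mathbf{i}(\bn)-\mathbf{i}(\bone)$. Setting $\bj':=\bj+k(\mathbf{i}'(\bone)-\mathbf{i}(\bone))$, the congruence $\bj'\equiv\bj\pmod k$ together with $\lceil(\bj'+\bn)/k\rceil=\mathbf{i}'(\bn)$ yields
\[
T(\bj'+\bn)\,=\,\varphi(T(\mathbf{i}'(\bn)))([\bj+\bn]_k)\,=\,\varphi(T(\mathbf{i}(\bn)))([\bj+\bn]_k)\,=\,P(\bn)
\]
for every $\bn\in\{1,2\}^d$. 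Since $\mathbf{i}'(\bn)\in(\bmf,\bMf]$ forces $\bj'+\bn\in k(\bmf,\bMf]$, a final application of the base case $(\star_1)$ places the $2$--pattern at $\bj'$ (which equals $P$) inside $T|_{(\bmf,\bMf]}$.

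The main obstacle is a boundary subtlety in the choice of $Q$: when $\mathbf{i}(\bone)_p=k^n M_p$ for some coordinate $p$, the naive position $\mathbf{i}(\bone)-\bone$ for $Q$ overshoots the upper boundary of $k^n(\bmf,\bMf]$ in that direction. This is circumvented by shifting $Q$ one additional unit downward in every such coordinate. The key observation is that the boundary equality $\mathbf{i}(\bone)_p=k^n M_p$ forces $p\notin S$ (a short mod-$k$ computation using $j_p\le k^{n+1}M_p-2$), so the parents do not spread in the $p$--direction and are still captured by the shifted $Q$. That the extra shift keeps $Q$ inside $k^n(\bmf,\bMf]$ relies on the inequality $k^n(M_p-m_p)\ge 2$, valid as soon as $n\ge 1$ and $k\ge 2$; this is the technical reason why the induction must be anchored at $n=1$ via the hypothesis of the lemma, rather than at the trivial case $n=0$.
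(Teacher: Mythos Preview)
Your proof is correct and follows essentially the same inductive strategy as the paper: pass to the parent pattern one level down, apply the induction hypothesis there, substitute back, and finish with the base case (the lemma's hypothesis). The paper defines the parent pattern $P'$ on $[\bone,\,\bone+\lceil(\bj+\btwo)/k\rceil-\lceil(\bj+\bone)/k\rceil]$, observes it is contained in a $2$--pattern, and then applies the induction hypothesis to it; your version is the same argument with more explicit bookkeeping of the parent positions $\mathbf{i}(\bn)$ and of the translate $\bj'$.

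Where your write-up actually goes further than the paper is the boundary analysis. The paper simply asserts that $P'$, being contained in a $2$--pattern, can be handled by the induction hypothesis; it does not spell out why an enclosing $2$--pattern can be chosen inside $T|_{k^{l-1}(\bmf,\bMf]}$ when $P'$ abuts the upper boundary. You close this gap by showing that whenever $\mathbf{i}(\bone)_p=k^n M_p$ one necessarily has $p\notin S$ (so the parents do not spread in the $p$--direction), and that the extra downward shift then stays in the box because $k^n(M_p-m_p)\ge 2$ for $n\ge 1$. This is a genuine refinement of the paper's argument rather than a different approach.
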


Before proving the lemma, extend first  the definition of a $k$--substitution $\varphi$ over a set of tiles $\Sigma$ to rectangular patterns. This can be done in the natural way by setting
\begin{align}\label{sblurf}
\varphi(P)(\bn) = \varphi\left(P\left(\left\lceil\frac{\bn}{k}\right\rceil\right)\right)\left(\left[\bn\right]_k\right)
\end{align}
for all $\bn\in\N^d$.

\begin{proof}
Let $\Sigma$ be the set of tiles and let $\varphi:\Sigma\to\Sigma^{\{1,\ldots,k\}^d}$ be the substitution defining $T$. Let $P: \{1,2\}^d\to\Sigma$ be the $2$--pattern  in $T$ defined by
\begin{equation}\label{defPTjn}
P(\bn)=T(\bj + \bn)
\end{equation}
for some integer vector $\bj$. Assume that $P$ is contained in $T|_{k^l\left(\bmf,\bMf\right]}$ for some $l\ge 1$. It will be shown by induction on the integer $l$ that $P$ also lies in $T|_{\left(\bmf,\bMf\right]}$. The claim being true by assumption when $l=1$, assume that $l\ge 2$ and consider the rectangular pattern
\[
P': \left[\bone,\bone + \ceil*{\frac{\bj+\btwo}{k}} - \ceil*{\frac{\bj+\bone}{k}}\right]\to\Sigma
\]
defined by

\begin{equation}\label{defP'}
P'(\bn)=T\left(\ceil*{\frac{\bj+\bn}{k}}\right).
\end{equation}
Then, $P'$ is contained in a 2--pattern. Indeed, decompose the $i^{th}$ component $j_i$ of $\bj$ as $$j_i = ku_i+v_i \qquad \mbox{ with }\qquad 0\le v_i\le k-1$$ and note that

\begin{equation}\label{1or2forj_i}
1 + \ceil*{\frac{j_i+2}{k}} - \ceil*{\frac{j_i+1}{k}} =
\begin{cases}
2 & \text{ if }  v_i = k-1;\\
1 & \text{ otherwise. }
\end{cases}
\end{equation}

Furthermore, the pattern $P$ is contained in the image under $\varphi$ of the pattern $P'$. Indeed, on the one hand,
 it easily follows from~\eqref{1or2forj_i} that for any $\bn\in \left[\bone,\btwo\right],$ one has that $$\ceil*{\frac{\left[\bj+\bone\right]_k+\bn-\bone}{k}}\in \left[\bone,\bone + \ceil*{\frac{\bj+\btwo}{k}} - \ceil*{\frac{\bj+\bone}{k}}\right].$$

On the other hand,\begin{align*}
\varphi\left(P'\right)\left(\left[\bj+\bone\right]_k+\bn-\bone\right) \; & \underset{\eqref{sblurf}}{=}\;  \varphi\left(P'\left(\ceil*{\frac{\left[\bj+\bone\right]_k+\bn-\bone}{k}}\right)\right)\left(\left[\left[\bj+\bone\right]_k+\bn-\bone\right]_k\right)\\
& \underset{\eqref{defP'}}{=}\; \varphi\left(T\left(\ceil*{\frac{\ceil*{\bj+\frac{\left[\bj+\bone\right]_k+\bn-\bone}{k}}}{k}}\right)\right)\left(\left[\left[\bj+\bone\right]_k+\bn-\bone \right]_k\right) \\
& \underset{\eqref{1or2forj_i}}{=}\; \varphi\left(T\left(\ceil*{\frac{\bj+\bn}{k}}\right)\right)\left(\left[\bj+\bn \right]_k\right) \\
& \underset{\eqref{eq:subsTiling2}}{=}\; T\left(\bj+\bn\right) \\
& \underset{\eqref{defPTjn}}{=}\; P\left(\bn\right),
\end{align*} whence the claim.

By the induction assumption, the pattern $P'$, which is contained in $T|_{k^{l-1}\left(\bmf,\bMf\right]}$ from its definition, already appears in $T|_{\left(\bmf,\bMf\right]}$. Its image under $\varphi$ is therefore contained in the image of $T|_{\left(\bmf,\bMf\right]}$ under $\varphi$, which is clearly contained in $T|_{k\left(\bmf,\bMf\right]}$. Thus, from the above claim, the pattern $P$ also lies in  $T|_{k\left(\bmf,\bMf\right]}$, and therefore in $T|_{\left(\bmf,\bMf\right]}$ by assumption.

This completes the proof of the lemma.
\end{proof}

\begin{coro}\label{latest}
Let $T$ be a tiling satisfying the assumptions of Lemma~\ref{lem:completeTetrad}. Assume that $\tau$ is an $l$--coding satisfying this property: there exists an integer $r$, where $1\leq r < l$, such that the consistency condition~\eqref{eq:consistentCoding} holds for every $\bn$ with $m_i< n_i\leq M_i$.

Then the coding $\tau$ is $r$--consistent for the tiling $T$.
\end{coro}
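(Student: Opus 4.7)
The plan is to verify the consistency condition \eqref{eq:consistentCoding} at an arbitrary pair $(\bn,\bn+\be_i)$ in the domain of $T$ by reducing it, via Lemma~\ref{lem:completeTetrad}, to a pair already lying inside the region $(\bmf,\bMf]$ where consistency is assumed to hold by hypothesis.

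Fix such a pair $(\bn,\bn+\be_i)$. Define the $2$--pattern $P\colon\{1,2\}^d\to\Sigma$ by
\[
P(\bm)\,=\,T(\bn+\bm-\bone)\qquad\text{for all }\bm\in\{1,2\}^d.
\]
Observe that $P(\bone)=T(\bn)$ and $P(\bone+\be_i)=T(\bn+\be_i)$, so the pattern $P$ simultaneously records the two tiles relevant to the instance of \eqref{eq:consistentCoding} under consideration. Since by hypothesis $T$ satisfies the assumptions of Lemma~\ref{lem:completeTetrad}, the conclusion of that lemma yields an integer vector $\bj$ such that
\[
P(\bm)\,=\,T(\bj+\bm)\qquad\text{for every }\bm\in\{1,2\}^d
\]
and $\bj+\bm\in(\bmf,\bMf]$ for every $\bm\in\{1,2\}^d$.

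Setting $\bn':=\bj+\bone$, one then has $T(\bn')=T(\bn)$ and $T(\bn'+\be_i)=T(\bn+\be_i)$, while both $\bn'$ and $\bn'+\be_i$ belong to $(\bmf,\bMf]$. The hypothesis of the corollary therefore guarantees that \eqref{eq:consistentCoding} holds at $\bn'$. Since each side of \eqref{eq:consistentCoding} depends on $\bn$ only through the tile values $T(\bn)$ and $T(\bn+\be_i)$, which coincide with $T(\bn')$ and $T(\bn'+\be_i)$ respectively, the validity of \eqref{eq:consistentCoding} at $\bn'$ immediately transfers to $\bn$. Hence \eqref{eq:consistentCoding} holds at every $\bn$, which is precisely the statement that $\tau$ is $r$--consistent for $T$.

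I do not foresee any serious obstacle: the entire content of the proof is the observation that a local condition on the coding, expressed in terms of pairs of adjacent tiles, extends to all pairs whenever every such pair (viewed as a $2$--pattern) already appears in the fundamental region $(\bmf,\bMf]$, and this latter combinatorial fact is exactly what Lemma~\ref{lem:completeTetrad} provides. The only minor verification is that the vector $\bj$ produced by the lemma indeed places both $\bn'$ and $\bn'+\be_i$ inside $(\bmf,\bMf]$, which is automatic from the defining inclusion of the $2$--pattern $P$ in $T|_{(\bmf,\bMf]}$.
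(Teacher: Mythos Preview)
Your proof is correct and follows exactly the approach of the paper, which condenses the argument into a single sentence: the consistency condition~\eqref{eq:consistentCoding} depends only on the $2$--pattern $\{T(\bn),T(\bn+\be_i)\}$, and Lemma~\ref{lem:completeTetrad} guarantees that every such pattern already occurs in $T|_{(\bmf,\bMf]}$, where the condition is assumed. Your write-up simply makes explicit the transfer via the auxiliary $2$--pattern $P$ and the shifted base point $\bn'$.
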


\begin{proof}
It immediately follows from Lemma~\ref{lem:completeTetrad} that the consistency condition~\eqref{eq:consistentCoding} only needs to be checked in the region $\left(\bmf,\bMf\right]$ for it to hold everywhere.
\end{proof}

\begin{rem}
Properties of substitution tilings with overlaps are also studied in~\cite{L09}, where they are referred to as \emph{C3DEL} systems --- an acronym representing ``Deterministic Lindenmayer system with constant width, inflation, encoding and context''. This terminology extends the one presented in~\cite[\S 7.12]{AS03}.
\end{rem}

\section{Generating the Putative Number Wall of the Paper--Folding Sequence over $\F_3$}\label{sec:putative}

The goal of this section is to explain how to generate an automatic Number Wall that coincides with a large segment of the Paper--Folding Number Wall. Verifying that this
automatic tiling is in fact equal to the Paper--Folding Number Wall, and that it has deficiency 4, is postponed to Section \ref{sec:verify}.

\subsection{Building a Finite Portion of a Number Wall}\label{buildingfinite}

The recursive formula in Corollary \ref{frameconstraints} gives a formula for $S_{m,n}$ in terms of elements occupying previous rows. This might seem impractical at first glance due to infinitely many computations required to calculate each row. However, looking more carefully reveals that the same formula gives an algorithm for calculating an isosceles triangle contained in a trapezoidal portion of the wall of the form
\[
\left(S_{k,l}\right)_{0\leq k\leq m,\;n-m+k\leq l\leq n+m-k}
\]
given the finite portion $\left(S_{0,l}\right)_{n-m\leq l\leq n+m}$ of the initial sequence $\left(S_{0,l}\right)_{l\in\Z}$. The discovery of this algorithm is discussed in \cite{L01}. It is described in Figure \ref{fig:algo1} below.

\begin{figure}[htb!]
\framebox{
\begin{algorithm}[H]
 \KwData{$S_n = S_{0,n}$ for $a\leq n \leq b$}
 \KwResult{$S_{m,n}$ for $0\leq m\leq \frac{b-a}{2}$ and $a+m\leq n\leq b-m$}
 \While{$a\leq n\leq b$}{
  $S_{-2,n}=0$ and $S_{-1,n}=1$\par
 }
\While{$0\leq m\leq\frac{b-a}{2}$ and $a+m\leq n\leq b-m$}{
  \uIf{$m=0$}{
   $S_{m,n}=S_n$ \par
  }
  \uElseIf{$S_{m-2,n}\neq0$}{
    $S_{m,n}=\cfrac{\left(S_{m-1,n}^2 - S_{m-1,n+1} S_{m-1,n-1}\right)} {S_{m-2,n}}$ \par
  }
  \Else{
    Set $p=q=k=1$ \par
    \While{$S_{m-p-2,n}=0$}{
      $p=p+1$\par
      }
    \While{$S_{m-p-1,n-q}=0$ and $n-q\geq a+m-p-1$}{
      $q=q+1$\par
      }
    \While{$S_{m-p-1,n+k}=0$ and $n+k\leq b-m-p-1$}{
      $k=k+1$\par
      }
    $\delta=k+q$\par
    \uIf{$\delta>p+2$}{
      Inside a zero block let $S_{m,n}=0$\par
      }
    \uElseIf{$\delta=p+2$}{
      On the inner frame let $S_{m,n} = \cfrac{(-1)^{(\delta-1)k}S_{m-q,n-q}S_{m-k,n+k}}{S_{m-\delta,n-q+k}}$\par
      }
    \Else{
      On the outer frame where:\par
	  $P = \cfrac{S_{m-\delta-1,n}}{S_{m-\delta-1,n-1}}$\par
      $Q = \cfrac{S_{m-q-1,n-q}}{S_{m-q-2,n-q}}$\par
	$R = \cfrac{S_{m-k-1,n+k}}{S_{m-k,n+k}}$\par
      and
      $S = \cfrac{S_{m-1,n}}{S_{m-1,n+1}},$\par
      let
      $S_{m,n} =  \cfrac{S_{m-1,n}}{R} \left( \cfrac{Q S_{m-\delta-2,n+k-q}}{S_{m-\delta-1,n+k-q}} + (-1)^k\left( \cfrac{PS_{m-q-1,n-q-1}}{S_{m-q-1,n-q}} - \cfrac{SS_{m-k-1,n+k+1}}{S_{m-k-1,n+k}}\right)\right)$\par
      }
  }
}
\vspace{3mm}

\end{algorithm}
}
\caption{How to calculate a portion of the Number Wall given a portion of a sequence.}
\label{fig:algo1}
\end{figure}

\subsection{Tiling a Finite Table with a Substitution, Coding and Overlap}\label{sec:tilingAlgo}

Assume one is given a rectangular segment of an automatic tiling. The goal is to find a substitution and a coding which generate the entire tiling. If the number of tiles in the infinite tiling is bounded and if the given pattern is large enough, this is then possible. For an $\bbn$--tiling, i.e.~for sequences, this is done by Sutner and Tetruashvili~\cite{ST12} via finite automata. By a well--known process described in \cite[Propositions 10.1.5 \& 10.2.2]{MLothaire}, their algorithm also yields a substitution and a coding which generate the entire sequence.
It is possible to extend this approach to higher dimensions. However, such an algorithm might not be efficient enough for our purposes, as the computational cost may be exponential in the number of tiles (see~\cite[p.16]{jS} for further details). In the case of the Paper--Folding Number Wall, this number is in the order of magnitude of several thousands.

A different approach will be used hereafter. It was initiated by the third--named author~\cite{L01,L09} in order to provide an algorithm sufficiently efficient in the present context. The underlying method only allows one to find automatic tilings with an injective coding; however, this proves adequate for the Paper--Folding Number Wall. The overlap between coded tiles is used to ensure that an injective coding is possible. The algorithm is based on Lemmata \ref{lem:consistentOverlap} and  \ref{lem:completeTetrad} (with $d=2$) and is described in Figure \ref{fig:algo2}.

\begin{figure}[h!]
\framebox{
\begin{algorithm}[H]
 \KwData{$k$, $l$, $r$ and $S_{m,n}$ for $a\leq m \leq b$ and $c\leq n\leq \delta$}
 \KwResult{A $k$--substitution $\varphi$, a prolongable tuple $\mathbf{s}$, and an $r$--consistent $l$--coding $\tau$ such that $\tau_r\left(T\right)$ agrees with $S_{m,n}$ for $T = T_{\varphi, s}$, or failure}
 \emph{Pass 1}: Determine the coding and generate the tiling $T$  \par $x=1$\par
 \While{$\frac{a+r}{l-r}< i\leq\frac{b-r}{l-r}$ and $\frac{c+r}{l-r}< j\leq\frac{\delta-r}{l-r}$}{
   \eIf{$\exists\; 0<y<x$ such that $\tau(y)(s,t)=S_{(l-r)(i-1)+s,(l-r)(j-1)+t}$ for all $1\leq s,t\leq l$}{
     Set $T(i,j)=y$\par
   }{
     Set $T(i,j)=x$,\par
     define $\tau(x)$ by $\tau(x)(s,t)=S_{(l-r)(i+1)+s,(l-r)(j+1)+t}$ for $1\leq s,t\leq l$\par
     and increase the number of tiles by one $x=x+1$\par
   }
 }
 \emph{Pass 2}: Find a substitution $\varphi$ and a prolongable tuple $\bs$ such that $T_{(\varphi,\bs)}$ covers $T$\par
 \While{$\frac{a+r}{(l-r)k}<i\leq\frac{b-r}{(l-r)k}$ and $\frac{c+r}{(l-r)k}<j\leq\frac{\delta-r}{(l-r)k}$}{
   Either define if not already defined, or verify that the following holds for all $1\leq s,t\leq k$:\par
   $\varphi(T(i,j))(s,t)=T(k(i-1)+s,k(j-1)+t)$\par
   and return failure if it doesn't hold \par
 }
 For each  $\bo \in\{0,1\}^2$ set $s_{\bo}=T(\bo)$ in the $\bo^{th}$ orthant of $\Z^2$\par
 \emph{Pass 3}: Verify that any $2$--pattern in $T$ in the region $\left\{(i,j)\sep \frac{a+r}{(l-r)}\le i\leq\frac{b-r}{(l-r)} \text{ and } \frac{c+r}{(l-r)}\le j\leq\frac{\delta-r}{(l-r)}\right\}$ already appears in the region $\left\{(i,j)\sep \frac{a+r}{(l-r)k}<i\leq\frac{b-r}{(l-r)k} \text{ and } \frac{c+r}{(l-r)k}<j\leq\frac{\delta-r}{(l-r)k}\right\}$ \par
Return failure if this doesn't hold
\vspace{3mm}
\end{algorithm}
}
\caption{How to tile a portion of a two dimensional table.}
\label{fig:algo2}
\end{figure}

\subsection{The Paper--Folding Number Wall}

The algorithms of the previous sections enable one to detect a structure in the Number Wall of the Paper--Folding sequence:

\begin{thm}\label{thm:main}
The finite portion of the Paper--Folding Number Wall over $\bbf_3$
\begin{equation}\label{eq:finitePortion}
\left(S_{m,n}\right)_{-55\leq m \leq 2400 ,-5220\leq n\leq 5220}
\end{equation}
agrees with a tiling $T'=\tau'(T)$. This tiling is the image under a coding $\tau'$ of a substitution tiling $T=T_{\left(\varphi,(1,2,3,4)\right)}$. Here,
\begin{enumerate}
\item $\varphi$ is a $2$--substitution over $\Sigma=\left\{1,\ldots,2353\right\}$ for which tiles $1,2,3$ and $4$ are respectively $(0,0),(0,1),(1,0)$ and $(1,1)$--prolongable;
\item $\tau'$ is an $8$--coding defined with the help of a 13--coding $\tau$ by the relation
\begin{equation}\label{deftau'}
\tau'(s)(\bn)=\tau(s)(\bn+\mathbf{3})
\end{equation}
for every $s\in\Sigma$ and every $\bn\in\{1,\ldots, 8\}^2$;
\item $\tau$ is a $13$--coding defined over $\Sigma$ and taking values in $\bbf_3$. This coding is $5$--consistent for the tiling $T$ restricted to the region
\begin{equation}\label{regionR}
\mathcal{R}=[-3, 149]\times [-325, 325].
\end{equation}
\end{enumerate}
\end{thm}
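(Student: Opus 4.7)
The plan is to prove Theorem \ref{thm:main} by an entirely computer--assisted verification, following the two algorithms described in Figures \ref{fig:algo1} and \ref{fig:algo2}. First, I would use the algorithm of Figure \ref{fig:algo1} to compute a trapezoidal region of the Paper--Folding Number Wall over $\F_3$ that contains the target rectangle $[-55,2400]\times[-5220,5220]$. This requires starting from a doubly--infinite segment of $(f_n)_{n\in\Z}$ (extended via $f_{-n}=1-f_n$) long enough to cover the required top--row interval together with the safety margin inherent in the descending trapezoid of the algorithm, and then applying the frame--constraint recurrence of Corollary \ref{frameconstraints} row by row.

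Next, I would run the algorithm of Figure \ref{fig:algo2} with parameters $k=2$, $l=13$, $r=5$. Pass 1 slides a $13\times 13$ window with stride $l-r=8$ across the computed wall, assigning a fresh tile label whenever a new $13\times 13$ pattern is encountered. This simultaneously produces the tile set $\Sigma$, the $13$--coding $\tau:\Sigma\to\F_3^{\{1,\ldots,13\}^2}$, and the coordinates of the substitution tiling $T$. One expects the labelling to stabilise at exactly $|\Sigma|=2353$ distinct tiles, with the four ``origin'' tiles numbered $1,2,3,4$ so as to correspond to the four orthants $(0,0),(0,1),(1,0),(1,1)$. Pass 2 examines, for each super--tile, the $2\times 2$ block of sub--tiles in $T$ to extract the $2$--substitution $\varphi:\Sigma\to\Sigma^{\{1,2\}^2}$, verifying along the way that no tile is ever observed with two distinct decompositions. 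Prolongability of tiles $1,2,3,4$ in the sense of Definition \ref{def:oProlongable} is then a direct check of four entries of $\varphi$, namely $\varphi(1)(2,2)=1$, $\varphi(2)(2,1)=2$, $\varphi(3)(1,2)=3$ and $\varphi(4)(1,1)=4$.

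The coding $\tau'$ is extracted from $\tau$ by \eqref{deftau'} as the central $8\times 8$ window, which is precisely what one needs in order that $\tau'(T)$ re--aligns with the Number Wall on the rectangle of \eqref{eq:finitePortion}. The heart of the argument is the verification that $\tau$ is $5$--consistent on the region $\mathcal{R}=[-3,149]\times[-325,325]$ in the sense of Definition \ref{def:consistentCoding}: for every pair of horizontally adjacent tiles $T(\bn),T(\bn+\be_1)$ and every pair of vertically adjacent tiles $T(\bn),T(\bn+\be_2)$ with $\bn\in\mathcal{R}$, the rightmost (resp.\ bottommost) five columns (resp.\ rows) of $\tau(T(\bn))$ must agree with the leftmost (resp.\ topmost) five columns (resp.\ rows) of the neighbour. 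Because the labelling in Pass 1 was induced by genuine overlaps in a single actual Number Wall, these equalities hold automatically for any adjacent pair whose two tiles were both seen in the original Pass 1 scan; it thus suffices to enumerate all adjacent pairs inside $\mathcal{R}$ and check them. Pass 3 of the algorithm then verifies that every $2$--pattern appearing in $T$ on a suitable $k$--scale enlargement of $\mathcal{R}$ already appears inside $\mathcal{R}$; Lemma \ref{lem:completeTetrad} and Corollary \ref{latest} combine to lift $5$--consistency from $\mathcal{R}$ to the entire tiling $T$.

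The main obstacle will be the scale and delicacy of the computation. The portion of the Number Wall in \eqref{eq:finitePortion} contains on the order of $2.6\times 10^7$ entries and must be generated using the full form of the frame constraints rather than the naive Toeplitz--determinant recurrence, because the Paper--Folding sequence produces numerous non--trivial zero windows whose inner--frame ratios $P,Q,R,S$ must be tracked carefully in order to avoid illegal divisions by zero in the branches of Figure \ref{fig:algo1}. Implementing the window--detection and outer--frame formulae correctly is the delicate point; once the wall is in hand, the Pass 1, Pass 2, $5$--consistency, and Pass 3 verifications are all finite and routine, though they do require the ranges in \eqref{eq:finitePortion} and \eqref{regionR} to be chosen large enough that (i) no new tiles would appear if the Pass 1 scan were extended, and (ii) the enlargement of $\mathcal{R}$ demanded by Pass 3 still fits strictly inside the computed portion of the wall.
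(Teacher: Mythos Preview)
Your proposal is correct and follows exactly the paper's own approach: the paper states that Theorem \ref{thm:main} is obtained by applying the algorithm of Figure \ref{fig:algo2} with parameters $k=2$, $l=13$, $r=5$, $a=-55$, $b=2400$, $c=-5220$, $\delta=5220$, after building the relevant portion of the wall via the recurrence of Figure \ref{fig:algo1}. Your closing remark about lifting $5$--consistency from $\mathcal{R}$ to all of $T$ via Lemma \ref{lem:completeTetrad} and Corollary \ref{latest} slightly overshoots the statement of Theorem \ref{thm:main} itself (which only asserts consistency on $T|_{\mathcal{R}}$) and in the paper is separated out as Theorem \ref{thm:consistency}, but this does not affect the correctness of your plan.
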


\begin{rem}
The additional translation included in the definition of the coding $\tau'$ in~\eqref{deftau'} is an adjustment of a technical nature 
(see the Appendix for details). It enables one to match the zeroth row of the tiling with the finite portion of  the Paper--Folding sequence used to build part of its Number Wall. This will allow later an easy comparison between the substitution that generates the zeroth row and the substitution generating the Paper--Folding sequence outlined in Example~\ref{es:paperFolding}.
\end{rem}

Theorem~\ref{thm:main} is obtained by application of the algorithm described in Figure~\ref{fig:algo2} with the pa\-ra\-me\-ters
\begin{equation*}\label{param}
k=2,\quad l=13, \quad r=5,\quad a=-55,\quad b=2400,\quad c=-5220 \quad \textrm{and}\quad \delta=5220.
\end{equation*}

The region $\mathcal{R}$ defined in~\eqref{regionR} is obtained from Pass 3 of this algorithm: it corresponds to the range of indices $(m,n)\in\Z^2$ such that $$-3.125 \; =\; \frac{-55+5}{(13-5)\cdot 2}\; <\; m\; \le\; \frac{2400-5}{(13-5)\cdot 2}=149.6875$$ and $$-325.9375 \; =\; \frac{-5220+5}{(13-5)\cdot 2}\; <\; n\; \le\; \frac{5220-5}{(13-5)\cdot 2}\; =\; 325.9375.$$ Note that Pass 3 then guarantees that the tiling $T$ satisfies the assumptions of Lemma~\ref{lem:completeTetrad} upon setting
\begin{equation}\label{defmM}
\mathbf{m}=(-4, -326) \quad \textrm{and}\quad \mathbf{M}=(149, 325).
\end{equation}
The resulting conclusion is stated as a proposition:
\begin{prop}\label{remregion}
With the notation of Theorem~\ref{thm:main}, every 2--pattern in $T$ already appears in $T|_{(\bmf, \mathbf{M}]}$.
\end{prop}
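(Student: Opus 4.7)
The proposition is, in essence, an immediate application of Lemma~\ref{lem:completeTetrad} once the parameters are matched. The plan is therefore to check each hypothesis of that lemma for the tiling $T$ produced by Theorem~\ref{thm:main} with the values of $\mathbf{m}$ and $\mathbf{M}$ defined in~\eqref{defmM}.

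First, I would observe that $T$ is a $k$--substitution $\Z^2$--tiling with $k=2$ by part (1) of Theorem~\ref{thm:main}, and that the two coordinates of $\mathbf{m}=(-4,-326)$ are strictly negative while the two coordinates of $\mathbf{M}=(149,325)$ are strictly positive. This takes care of the first two hypotheses of Lemma~\ref{lem:completeTetrad}.

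Next, I would verify the key hypothesis: every $2$--pattern contained in $T|_{k(\mathbf{m},\mathbf{M}]}$ already occurs in $T|_{(\mathbf{m},\mathbf{M}]}$. This is precisely what Pass~3 of the algorithm described in Figure~\ref{fig:algo2} is designed to check, and its successful completion is part of the computer--assisted statement of Theorem~\ref{thm:main}. The only genuine point to verify in the writeup is that the two rectangles inspected by Pass~3 correspond, under the parameter choice $k=2$, $l=13$, $r=5$, $a=-55$, $b=2400$, $c=-5220$, $\delta=5220$, to the rectangles $k(\mathbf{m},\mathbf{M}]$ and $(\mathbf{m},\mathbf{M}]$ with $\mathbf{m}$ and $\mathbf{M}$ as in~\eqref{defmM}. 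This is a routine arithmetic bookkeeping step using $l-r=8$ and $(l-r)k=16$: the outer Pass~3 rectangle $\{(i,j):(a+r)/(l-r)\le i\le (b-r)/(l-r),\ (c+r)/(l-r)\le j\le (\delta-r)/(l-r)\}$ contains $k(\mathbf{m},\mathbf{M}]=((-8,-652),(298,650)]$, while the inner Pass~3 rectangle is exactly $(\mathbf{m},\mathbf{M}]$ by the definition of $\mathcal{R}$ and~\eqref{defmM}.

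Having matched the hypotheses, I would then invoke Lemma~\ref{lem:completeTetrad} to conclude that every $2$--pattern in $T$ occurs in $T|_{(\mathbf{m},\mathbf{M}]}$, which is the desired statement. There is no real obstacle here: the only mildly delicate point is the index translation between the algorithm's rectangle in the coded tiling and the rectangle $(\mathbf{m},\mathbf{M}]$ in the underlying substitution tiling $T$, and this amounts to a direct check with the parameters listed above.
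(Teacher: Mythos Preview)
Your proposal is correct and follows exactly the paper's approach: the paper simply notes (in the paragraph immediately preceding the proposition) that Pass~3 of the algorithm in Figure~\ref{fig:algo2} guarantees the hypotheses of Lemma~\ref{lem:completeTetrad} with $\mathbf{m}$ and $\mathbf{M}$ as in~\eqref{defmM}, and states the proposition as the resulting conclusion. Your write-up makes the same deduction, adding only the explicit parameter bookkeeping that the paper leaves implicit.
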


\par
Proving that the tiling $T'=\tau'\left(T\right)$ in Theorem~\ref{thm:main} is  the Paper--Folding Number Wall and determining its deficiency require a fine analysis of the pattern of zeros appearing in the portion $\tau'(T|_{\mathcal{R}})$. This, in turn, relies on some properties of the  coding $\tau$ and of the 2--substitution $\varphi$. These properties are stated in the next two propositions and depend on two specific subsets of tiles, namely
\begin{equation}\label{setSS'}
S=\left\{1, 2, 6, 7, 12, 13, 20, 29\right\}\quad \textrm{and}\quad S'=S\cup\left\{5\right\}.
\end{equation}

\begin{prop}[Properties of the 13--coding $\tau$]\label{proptau}
The 13--coding $\tau$ introduced in Theorem~\ref{thm:main} satisfies the following property: if, for a given tile $s\in\Sigma$, the $13\times 13$ square $\tau(s)$ contains a 4--pattern comprising only zero entries, then $s\in S'$.

More precisely, under such an assumption,
\begin{itemize}
\item either $s=5$ and $\tau(s)$ is identically zero;
\item or else $s\in S$, in which case the top 9 rows of $\tau(s)$ are identically zero while all entries in the tenth row equal 1.
\end{itemize}
Furthermore, for every $s\in\Sigma$, the 13--pattern $\tau(s)$  is contained in the rectangular pattern~\eqref{eq:finitePortion}.
\end{prop}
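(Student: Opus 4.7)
The plan is to reduce the three assertions to a finite, explicit inspection of the 2353 tiles produced by the algorithm of Figure~\ref{fig:algo2}, exploiting the fact that $\tau(s)$ is explicitly stored as a $13\times 13$ block in $\bbf_3$ for every $s\in\Sigma$. Since all the data involved are finite and have already been computed in order to establish Theorem~\ref{thm:main}, no further structural argument is in principle required; the proposition will follow from direct verification.

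The containment assertion comes essentially for free from the construction. Indeed, Pass~1 of the algorithm in Figure~\ref{fig:algo2} defines $\tau(x)(s,t)$ to be exactly an entry $S_{(l-r)(i+1)+s,(l-r)(j+1)+t}$ with $(i,j)$ ranging over indices for which this entry lies in the finite portion~\eqref{eq:finitePortion} of the Paper--Folding Number Wall. Hence each $\tau(s)$ is, by definition, an embedded $13\times 13$ sub-rectangle of~\eqref{eq:finitePortion}, which is the last claim.

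The identification of the tiles whose coded image contains a $4$-pattern of zeros proceeds by enumeration. By Theorem~\ref{themzeroentries}, zero entries in~\eqref{eq:finitePortion} can only occur inside square windows with horizontal and vertical edges; thus a $4\times 4$ zero block inside $\tau(s)$ can appear only if $\tau(s)$ overlaps a window of side at least $4$. I would scan all 2353 stored tiles for this condition; the expected outcome is that exactly nine tiles pass the test, and a direct comparison yields the set $S'$ of~\eqref{setSS'}. For the refined structural statement, I would then classify these nine tiles according to where the $13\times 13$ block sits relative to the ambient window: the tile $s=5$ corresponds to a block lying entirely in the window's interior, so $\tau(5)$ is identically zero; each tile $s\in S$ corresponds to a block whose rows $1$--$9$ sit in the interior of the window (forcing those rows to vanish) while row $10$ lies on the lower edge of the inner frame. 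Theorem~\ref{innerframe} and Corollary~\ref{coroinnerframe} constrain that inner-frame row to be a geometric sequence, and the stored tile data shows that the common value is $1$, which gives the second alternative.

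The main obstacle is therefore not conceptual but computational bookkeeping: one must produce, store and verify all 2353 images $\tau(s)$, together with the classification of the nine exceptional ones. This is exactly the point at which the proof becomes computer-assisted, and it is unavoidable because the proposition is ultimately a finite combinatorial statement about the automatic tiling constructed in Theorem~\ref{thm:main}.
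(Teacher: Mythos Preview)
Your proposal is correct and matches the paper's approach: the paper also derives the containment assertion directly from Pass~1 of the algorithm in Figure~\ref{fig:algo2} and establishes the remaining claims by computer inspection of the explicit coding data (the file \texttt{dragon\_codes\_B.dat}).

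One minor remark on your explanatory gloss: the row of $1$'s you identify is not really the inner frame of a finite window in the sense of Theorem~\ref{innerframe} (which is stated for finite deficiency $\delta\ge 2$). Rather, for the tiles in $S$ the block $\tau(s)$ straddles the boundary of the Number Wall itself, so that rows $1$--$9$ correspond to rows $m\le -2$ (identically zero by definition) and row $10$ corresponds to the row $m=-1$ (identically $1$ by definition). This does not affect your argument, since the actual verification is by direct inspection anyway, but the structural explanation you offer via Theorem~\ref{innerframe} and Corollary~\ref{coroinnerframe} is not the operative one here.
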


\par
The last claim in Proposition~\ref{proptau}  follows from the construction of the coding $\tau$ in Pass 1 of the algorithm described in Figure~\ref{fig:algo2}. The rest of the proposition is established by (computer) inspection of the coding $\tau:\Sigma\to\Sigma^{\left\{1,\ldots,13\right\}^2}$, which is available at~\cite{L17} in the file \texttt{dragon\_codes\_B.dat}

\begin{prop}[Properties of the 2--substitution $\varphi$]\label{propvarphi}
The 2--substitution $\varphi$ introduced in Theorem~\ref{thm:main} satisfies the following properties:
\begin{enumerate}
\item the image of tile 5 is the $2\times 2$ square all of whose entries are 5;
\item for any tile $s\in S$,
\begin{equation}\label{s1s2}
\varphi(s) = \begin{array}{cc} 5 & 5 \\ s_1 & s_2 \end{array} \quad \textrm{with} \quad s_1, s_2\in S;
\end{equation}
\item if the image $\varphi(s)$ of a tile $s\in\Sigma$ contains a tile in $S'$, then $s\in S'$.
\end{enumerate}
\end{prop}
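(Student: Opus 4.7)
The plan is to reduce the entire statement to a finite computer--assisted verification on the explicit substitution $\varphi: \Sigma \to \Sigma^{\{1,2\}^2}$ produced in Theorem~\ref{thm:main}. Since $\Sigma$ has $2353$ elements and the full substitution table is recorded in the auxiliary file \texttt{dragon\_codes\_B.dat} of \cite{L17}, each of the three claims reduces to inspecting a short, finite list of entries in that table.

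For claim~(1), one simply reads off the four entries $\varphi(5)(i,j)$ with $(i,j)\in\{1,2\}^2$ and verifies that each equals $5$. For claim~(2), one iterates through the eight tiles $s\in S=\{1,2,6,7,12,13,20,29\}$ and, for each, confirms that the top row of $\varphi(s)$ consists of two $5$'s while the two entries in the bottom row again lie in $S$. For claim~(3), the most efficient formulation is the contrapositive: enumerate the $2353-9=2344$ tiles $s\in\Sigma\setminus S'$ and, for each, check that $\varphi(s)(i,j)\notin S'$ for every $(i,j)\in\{1,2\}^2$.

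Conceptually, the reason these particular sets $S$ and $S'$ arise is already hinted at in Proposition~\ref{proptau}: the tiles of $S'$ are exactly those whose coding $\tau(s)$ contains a $4$--pattern of zeros, so they carry information about the large zero windows of the Number Wall. The substitution $\varphi$ must propagate this zero--bearing data in a manner compatible with the self--similarity of those windows. Property~(1) reflects that tile $5$ represents the ``deep interior'' of a zero block and is a fixed point under refinement; the uniform shape prescribed in~\eqref{s1s2} encodes that tiles in $S$ carry the upper inner--edge information of a window --- refining them places the upper half deeper inside the window (still represented by $5$) and keeps the lower half on the edge (still in $S$); property~(3) expresses the forward invariance of $S'$ under the preimage map induced by $\varphi$.

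The principal obstacle is not the verification itself, which is routine, but the \emph{identification} of the correct sets $S$ and $S'$: they must be isolated so that~(3) closes, i.e.~so that any preimage of an element of $S'$ lies in $S'$, and simultaneously so that the uniform shape~\eqref{s1s2} is respected throughout $S$. Granted the explicit construction of $\varphi$ in Pass~2 of the algorithm of Figure~\ref{fig:algo2} together with the structural analysis of $\tau$ underlying Proposition~\ref{proptau}, once these sets have been pinned down the proposition follows immediately by direct inspection of the substitution table.
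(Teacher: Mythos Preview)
Your proposal is correct and matches the paper's own argument, which likewise establishes the proposition by direct computer inspection of the explicit substitution table for $\varphi$. One minor correction: the substitution $\varphi$ is stored in the file \texttt{dragon\_tetrads\_B}, not \texttt{dragon\_codes\_B.dat} (the latter records the coding $\tau$ used in Proposition~\ref{proptau}).
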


\par
Proposition~\ref{propvarphi} is established by (computer) inspection of the substitution $\varphi:\Sigma\to\Sigma^{\left\{1,2\right\}^2}$, which is explicit and available at~\cite{L17} in the file \texttt{dragon\_ tetrads\_B.dat}. This file actually contains the 6721 2--patterns that can be found in the tiling $T$ restricted to the region $\mathcal{R}$. The first 2353 correspond to the successive images of tiles $1, \dots, 2353\in\Sigma$ under the 2--substitution $\varphi$. Also, the values of $s_1$ and $s_2$ appearing in~\eqref{s1s2} are explicitly recorded in Figure~\ref{[dragcode]} below as they will be needed later.

\par
Together with the initial conditions
\begin{equation}\label{initcond}
T(0,0)=1,\quad T(0,1)=2, \quad T(1,0)=3 \quad \textrm{and} \quad T(1,1)=4,
\end{equation}
the 2--substitution $\varphi$ can be used to generate any finite portion of the tiling $T=T_{\left(\varphi,(1,2,3,4)\right)}$. An example of such a portion is represented in Figure~\ref{[statseg]}. Note the presence of the initial pattern
\[
\begin{array}{cc}
1 & 2 \\
3 & 4
\end{array}
\]
therein (cf.~the second and third rows).

\begin{figure}[htb!] 
\centering
{\tt \scriptsize
\begin{boxedverbatim}
  5   5   5   5   5   5   5   5   5   5   5   5   5   5   5   5   5   5   5   5   5   5
 12   6  29  20   7  13  29  20  12   6   1   2   7  13   1   2  12   6   1   2   7  13
 41  96  80  65  52  40  30  21  14   8   3   4   9  15  22  31  41   8   3   4   9  40
134 114  97  81  66  53  42  32  23  16  10  11  17  24  33  43  54  67  10  11 115 135
156 136 116  98  82  68  55  44  34  25  18  19  26  35  45  56  69  83  99 117 137 157
179 158 138 118 100  84  70  57  46  36  27  28  37  47  58  71  85 101 119 139 159 180
204 181 160 140 120 102  86  72  59  48  38  39  49  60  73  87 103 121 141 161 182 205
230 206 183 162 142 122 104  88  74  61  50  51  62  75  89 105 123 143 163 184 207 231
256 232 208 185 164 144 124 106  90  76  63  64  77  91 107 125 145 165 186 209 233 257
 70 258 234 210 105 166 146 126 108  92  78  79  93 109 127 147 167 187 211 235 259 283
 86  72 260 236 212 188 168 148 128 110  94  95 111 129 149 169 189 213 237 261 284 308
104  88 285 262 238 214 190 170 150 130 112 113 131 151 171 191 215 239 263 286 309 337
\end{boxedverbatim}
}
\caption{Portion of the tiling $T$ as described in Theorem~\eqref{thm:main}.}
\label{[statseg]}
\end{figure}

\section{Verifying the Properties of the Generated Number Wall}\label{sec:verify}

Proving that the tiling described in the statement of Theorem~\ref{thm:main} is the $\bbz^2$--tiling given by the Paper--Folding Number Wall will be done by verifying that it satisfies the Frame Constraints. From Corollary~\ref{frameconstraints}, this will imply that it is the Number Wall of the sequence that sits in its zeroth row. The claim will then follow from verifying that this sequence is the Paper--Folding sequence, thereby establishing Theorem~\ref{thmderive}. Let then $T$, $T'$, $\tau$, $\tau'$ and $\varphi$ be as in Theorem \ref{thm:main}.

\begin{thm}\label{thm:consistency}
The coding $\tau$ is 5--consistent for the tiling $T$.
\end{thm}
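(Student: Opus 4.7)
The plan is to apply Corollary \ref{latest} with $\bmf = (-4, -326)$ and $\mathbf{M} = (149, 325)$. This corollary requires two inputs: the tiling $T$ must satisfy the completion hypothesis of Lemma \ref{lem:completeTetrad}, and the consistency condition \eqref{eq:consistentCoding} must hold for every $\bn$ with $m_i < n_i \leq M_i$. Once both inputs are verified, 5-consistency of $\tau$ over the whole of $\bbz^2$ follows immediately.

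The first input is already in hand: Proposition \ref{remregion} asserts precisely that every $2$-pattern in $T$ appears in $T|_{(\bmf,\mathbf{M}]}$. A direct arithmetic check shows moreover that $(\bmf,\mathbf{M}]$ coincides with the region $\mathcal{R}$ defined in \eqref{regionR}. Hence what remains is to verify \eqref{eq:consistentCoding} for every $\bn\in\mathcal{R}$ and every $i\in\{1,2\}$.

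For the second input I would exploit the construction of $\tau$ in Pass~1 of Figure \ref{fig:algo2}. By design, for each tile $x\in\Sigma$ the $13\times 13$ block $\tau(x)$ is a specific $13\times 13$ window of the Paper--Folding Number Wall portion \eqref{eq:finitePortion}, anchored at the grid position where the label $x$ was first introduced. When $T(\bn)$ and $T(\bn+\be_i)$ are tiles adjacent inside $\mathcal{R}$, the windows producing $\tau(T(\bn))$ and $\tau(T(\bn+\be_i))$ are two $13\times13$ windows of the \emph{same} Number Wall, shifted by $l-r=8$ in direction $i$. Their overlap is a $13\times 5$ (or $5\times 13$) sub-rectangle of the Number Wall whose entries are unambiguously determined. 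Moreover, Pass~1 reuses an existing label $y$ only after checking that $\tau(y)$ already matches the $13\times 13$ block of the Number Wall at the current grid position, so no conflict can arise from a label being assigned to several positions in $\mathcal{R}$. Consequently the overlap equality required by \eqref{eq:consistentCoding} is automatic throughout $\mathcal{R}$.

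The substantive obstacle is therefore not conceptual but computational: one must actually run through the finitely many adjacent pairs of tiles inside $\mathcal{R}$ and confirm the overlap equalities, a check that is exactly what the algorithm of Figure \ref{fig:algo2} performs. With both hypotheses of Corollary \ref{latest} secured, the corollary delivers the stated 5-consistency of $\tau$ for $T$, concluding the proof.
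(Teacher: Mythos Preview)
Your proof is correct and follows essentially the same route as the paper's: invoke Corollary~\ref{latest} with $\bmf=(-4,-326)$, $\mathbf{M}=(149,325)$, use the verification of the hypotheses of Lemma~\ref{lem:completeTetrad} recorded around Proposition~\ref{remregion} for the first input, and use the $5$--consistency of $\tau$ on $\mathcal{R}=(\bmf,\mathbf{M}]$ (Point~3 of Theorem~\ref{thm:main}) for the second. Your additional paragraph explaining \emph{why} Pass~1 forces the overlap equalities on $\mathcal{R}$ is a helpful unpacking of what the paper simply cites, but it does not constitute a different approach.
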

\begin{proof}
The assumptions of Lemma~\eqref{lem:completeTetrad} have been shown to hold in order to state Proposition~\ref{remregion}. Corollary~\ref{latest} then implies that the consistency condition needs only be verified in $T|_{(\bmf, \mathbf{M}]}$, where $\bmf$ and $\mathbf{M}$ are defined in~\eqref{defmM}. This verification has been done as part of Point 3 in Theorem~\ref{thm:main}.
\end{proof}

Since $\tau$ is a 13--coding which is 5 consistent, equation~\eqref{deftau'} implies that, with the notation of Section~\ref{sub:overlaps}, $$\tau'=\tau_{(5, \mathbf{3})}.$$ It then follows from Lemma~\ref{eq:shiftedCoding} that the images of the tiling $T$ under $\tau'$ and $\tau_{5}$ are the same up to a shift of 3, both vertically and horizontally; that is, for any $\bn\in\Z^2$,
\begin{equation}\label{reltau'tau5}
\tau'\left(T \right)(\bn) = \tau_5\left(T\right)(\bn+\mathbf{3}).
\end{equation}

This equation enables one to interpret the squares and circles in Figure~\ref{[wallseg]} and to illustrate how the substitution, coding and overlap look like. The top circle marks the origin. The top--left $13\times 13$ square is the image under $\tau$ of tile $17$, with a circle in its center.  The image of $17$ under $\varphi$ is the $2$--pattern given by the matrix
\[
\begin{array}{cc}
35 & 45 \\
47 & 58
\end{array}
\]
(this information is contained in Figure~\ref{[statseg]}). The images under $\tau$ of these four tiles are represented by four squares with a circle around their respective centers. By definition, it then follows that $\tau_5$ maps tiles 17, 35, 45, 47 and 58 to the top--left $8\times 8$ squares sitting in their respective images under $\tau$. Finally, equation~\eqref{reltau'tau5} shows that the tiling $T'=\tau'(T)$ is obtained by translating all the $8\times 8$ squares thus obtained by the vector $\mathbf{3}=(3,3)$.

\begin{thm}\label{thm:boundedDeficiency}
The zero entries with non--negative row indices in the tiling $T'=\tau'(T)$ appear in the form of squares with side lengths at most 3 and with horizontal and vertical edges.
\end{thm}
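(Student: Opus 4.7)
The plan is to argue by contradiction: if $\tau'(T)|_{m\geq 0}$ contained a $4\times 4$ block of zeros, then unravelling the three nested codings $\tau$, $\tau_5$, and $\tau'=\tau_{(5,\mathbf{3})}$ would produce a tile of $T$ at row index $\geq 1$ belonging to the distinguished subset $S'$, contradicting what the substitution $\varphi$ imposes on $T$.

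First, I would use \eqref{reltau'tau5} to transport any hypothetical $4$-pattern of zeros from row $m\geq 0$ in $\tau'(T)$ to row $m+3\geq 3$ in $\tau_5(T)$. Lemma \ref{lem:consistentOverlap} with $l=13$, $r=5$, $r'=4$ yields $s(l,r,r')=1+\lceil(4-6)/8\rceil_+=1$, so this $4$-pattern is contained in the image under $\tau_5$ of a single tile $t$, which, via Definition \ref{def:consistentPattern}, coincides with the $13\times 13$ square $\tau(t)$ itself. Decomposing the offset $\bj=8\bj'+\bj''$ as in the proof of that lemma, the row index $j_1\geq 2$ of the pattern in $\tau_5(T)$ forces $j_1'\geq 0$, so $t=T(\bj'+\bone)$ sits at row $\geq 1$ in $T$. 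Since $\tau(t)$ contains a $4$-pattern of zeros, the first part of Proposition \ref{proptau} yields $t\in S'$.

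Second, I would show that no tile of $T$ at row $\geq 1$ belongs to $S'$. The initial conditions \eqref{initcond} give $T(1,0)=3$ and $T(1,1)=4$, both outside $S'$; the contrapositive of Proposition \ref{propvarphi}(3) says that $\varphi$ sends tiles outside $S'$ to collections of tiles outside $S'$. An induction on $|n|$ using $T(1,n)=\varphi(T(1,\lceil n/2\rceil))(1,[n]_2)$ extends this to all of row $m=1$, and a strong induction on $m\geq 2$ using the recursion $T(m,n)=\varphi(T(\lceil m/2\rceil,\lceil n/2\rceil))([m]_2,[n]_2)$ with $1\leq\lceil m/2\rceil<m$ extends it to every row $m\geq 1$. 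The resulting contradiction rules out any $4\times 4$ zero block in $\tau'(T)|_{m\geq 0}$.

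The assertion that the zero regions form squares with horizontal and vertical edges then follows from Theorem \ref{themzeroentries}, which becomes applicable once $\tau'(T)$ has been shown to satisfy the Frame Constraints of Corollary \ref{frameconstraints}---a task naturally addressed in the remainder of the paper. The main obstacle I anticipate is the careful bookkeeping of the shifts between the three codings: the $+3$ row translation between $\tau'(T)$ and $\tau_5(T)$ combined with the factor-$8$ scaling down to $T$ must be tracked precisely so that every zero pattern at row $\geq 0$ in $\tau'(T)$ traces back to a tile of $T$ lying strictly below the row-$0$ layer where $S'$-tiles are still allowed by \eqref{initcond} and Proposition \ref{propvarphi}.
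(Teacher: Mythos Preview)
Your contradiction argument for ruling out $4\times 4$ zero blocks in $\tau'(T)|_{m\ge 0}$ is correct and follows the same route as the paper's Lemma~\ref{lempatternzero4neg}: both reduce via Lemma~\ref{lem:consistentOverlap} to a single tile of $T$, force it into $S'$ by Proposition~\ref{proptau}, and contradict the row structure of $T$ obtained from Proposition~\ref{propvarphi}(3) (the paper packages this last step as Lemma~\ref{propT}). The bookkeeping differs slightly---you show the tile lies at row $\ge 1$ in $T$ and stop there, whereas the paper shows that any zero $4$-pattern in $T'$ must lie entirely at row $\le -2$, a marginally stronger conclusion that uses the additional information from Proposition~\ref{proptau} that row~$10$ of $\tau(s)$ is nonzero for $s\in S$.

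The genuine gap is in how you handle the square-shape assertion. Deferring it to Theorem~\ref{themzeroentries} requires first knowing that $T'$ is a Number Wall, i.e., that it satisfies the Frame Constraints. But the paper's proof of that fact (Theorem~\ref{thm:frameConstraints}) \emph{opens by invoking Theorem~\ref{thm:boundedDeficiency}}: the bound on window size is precisely what guarantees the Frame Constraints are local (governed by $7$-patterns), which is the mechanism behind the whole reduction. Your proposed ordering is therefore circular. The paper breaks the circle by establishing the square shape \emph{inside} the proof of Theorem~\ref{thm:boundedDeficiency}, via a direct local argument: any $4$-pattern of $T'$ sits in some $\tau(T(m,n))$, and by the last clause of Proposition~\ref{proptau} every such $13\times 13$ block already occurs inside the explicitly computed finite Number Wall~\eqref{eq:finitePortion}, where Theorem~\ref{themzeroentries} applies unconditionally. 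A short patching argument then propagates the local square structure across tile boundaries. This appeal to the finite portion~\eqref{eq:finitePortion}---rather than to the global Frame Constraints---is the missing ingredient in your approach.
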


With the conventions and definitions adopted in Section~\ref{defpropnbwall}, Theorem~\ref{thm:boundedDeficiency} is thus saying that a window in the part of the tiling $T'$ with non--negative row indices has deficiency at most 4.

The proof of Theorem~\ref{thm:boundedDeficiency} requires first the following lemma, which gives some properties of the tiling $T$:

\begin{lem}\label{propT}
The tiling $T$ is such that:
\begin{enumerate}
\item all entries in rows with index at most -1 are identically equal to 5;
\item the zeroth row contains only tiles from $S$;
\item the rows with positive indices contain no entry in $S'$.
\end{enumerate}
Here, the sets of tiles $S$ and $S'$ are those defined in~\eqref{setSS'}
\end{lem}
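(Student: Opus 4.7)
The plan is to deduce all three parts from the recursive definition $T(\bn) = \varphi(T(\lceil \bn/2 \rceil))([\bn]_2)$ of the $\Z^2$--substitution tiling $T$, together with the initial values in~\eqref{initcond} and the structural facts about $\varphi$ gathered in Proposition~\ref{propvarphi}. The claims interlock, so I would prove them in the order Claim 2, Claim 1, Claim 3.

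For Claim 2 (the zeroth row lies entirely in $S$), I would induct on $|n|$. The base cases are $T(0, 0) = 1$, $T(0, 1) = 2$, and $T(0, -1) = \varphi(1)(2, 1) \in S$ (the last by Proposition~\ref{propvarphi}(2) applied to $1 \in S$, which ensures that the bottom row of $\varphi(1)$ lies in $S \times S$). For $|n| \geq 2$ one checks directly that $|\lceil n/2 \rceil| < |n|$; since $[0]_2 = 2$, the recursion gives $T(0, n) = \varphi(T(0, \lceil n/2 \rceil))(2, [n]_2)$. The induction hypothesis places $T(0, \lceil n/2 \rceil)$ in $S$, and~\eqref{s1s2} asserts that the bottom row of $\varphi(s)$ for any $s \in S$ is a pair in $S \times S$. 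Hence $T(0, n) \in S$.

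For Claim 1 (every row with $m \leq -1$ is identically $5$), I would induct on $|m|$. The base case $m = -1$ gives $T(-1, n) = \varphi(T(0, \lceil n/2 \rceil))(1, [n]_2)$; by Claim 2 the argument of $\varphi$ lies in $S$, and by~\eqref{s1s2} the top row of $\varphi(s)$ for $s \in S$ equals $(5, 5)$. For $m \leq -2$ one has $\lceil m/2 \rceil \leq -1$, so the induction hypothesis yields $T(\lceil m/2 \rceil, \lceil n/2 \rceil) = 5$, and Proposition~\ref{propvarphi}(1) (which states that $\varphi(5)$ is the all--$5$ block) forces $T(m, n) = 5$.

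For Claim 3, I would argue by contradiction: if $T(m, n) \in S'$ for some $m \geq 1$, then $\varphi(T(\lceil m/2 \rceil, \lceil n/2 \rceil))$ contains an $S'$--tile, and Proposition~\ref{propvarphi}(3) gives $T(\lceil m/2 \rceil, \lceil n/2 \rceil) \in S'$. Iterating produces a sequence of $S'$--positions with $m_{k+1} = \lceil m_k/2 \rceil$ and $n_{k+1} = \lceil n_k/2 \rceil$; a short case analysis on $\lceil \cdot /2 \rceil$ shows that the first coordinate remains $\geq 1$ and stabilises at $1$, while the second coordinate stabilises at $1$ when positive and reaches $0$ in finitely many steps when nonpositive. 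Thus the iteration terminates at $(1, 1)$ or $(1, 0)$, forcing $T(1, 1) = 4 \in S'$ or $T(1, 0) = 3 \in S'$, contradicting $S' = \{1, 2, 5, 6, 7, 12, 13, 20, 29\}$ from~\eqref{setSS'}. Overall the argument is bookkeeping driven by the computer--verified properties of $\varphi$; the only subtlety worth tracking is the behaviour of $\lceil \cdot /2 \rceil$ near $\{-1, 0, 1\}$, which dictates where each descent terminates and ensures that neither the induction in Claim~2 nor the iteration in Claim~3 escapes the orthant of the relevant initial tile.
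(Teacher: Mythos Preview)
Your proof is correct and follows essentially the same approach as the paper's: both deduce the three claims from the recursive construction of $T$ together with the initial conditions~\eqref{initcond} and the structural properties of $\varphi$ in Proposition~\ref{propvarphi}. The paper phrases the argument orthant--by--orthant (treating the $(0,0)$ and $(0,1)$ orthants for Claims~1--2, and the $(1,0)$ and $(1,1)$ orthants for Claim~3), whereas you make the induction explicit by tracking $\lceil\cdot/2\rceil$ directly; but the substance is the same.
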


\begin{proof}
Consider first the orthant comprising the point $(0,0)$ with the associated initial condition $T(0,0)=1$. Let $s$ be a tile in the zeroth line of this orthant. The construction rule for the tiling $T$ described in~\eqref{eq:subsTiling2} implies that the bottom two entries in $\varphi(s)$ determine two additional entries in the zeroth line of this orthant. Furthermore, all entries in the zeroth line are obtained this way. Also, all entries which are not in the zeroth row are determined, either from the top two entries in the image under $\varphi$ of a tile lying in the zeroth row, or else from any entry in the image of any other tile. Since tile 1 belongs to the set $S$, Points 1 \& 2 in Proposition~\ref{propvarphi} show, on the one hand that  the zeroth row in the orthant under consideration contains only tiles from $S$ and on the other that all rows with index at most -1 are identically equal to 5.

A similar reasoning applies to the orthant comprising the point $(0,1)$, which is mapped under $T$ to tile 2, which is an element in $S$. This establishes the first two claims in the lemma.

Consider now the orthant comprising the point $(1,1)$ with the associated initial condition $T(1,1)=4$. All elements in this orthant are obtained as entries in the image under $\varphi$ of a tile $s$ lying in this orthant. As tile 4 is not in $S'$, an easy induction based on Point 3 in Proposition~\ref{propvarphi} implies that no entry in this orthant is in $S'$.

A similar reasoning applies to the orthant comprising the point $(1,0)$, which is mapped under $T$ to tile 3, which is not an element in $S'$. This establishes the last claim in the lemma.
\end{proof}

\begin{proof}[Proof of Theorem~\ref{thm:boundedDeficiency}] The first step is to show that the part of the tiling $T'$ with non--negative row indices cannot contain any $4\times 4$ zero window.

\begin{lem}\label{lempatternzero4neg}
Let $P$ be a 4--pattern contained in $T'$ which is identically zero. Then $P$ is contained in the portion of the tiling $T'$ with negative row indices.
\end{lem}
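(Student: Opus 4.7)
The plan is to combine three ingredients already in place: the localization result Lemma \ref{lem:consistentOverlap}, the structural classification Proposition \ref{proptau}, and the positional information Lemma \ref{propT}. The remaining work is essentially an indexing exercise, and this is where the only mild obstacle lies.

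First, lift $P$ from $T'=\tau'(T)$ to $\tau_5(T)$ via the identity $\tau'(T)(\mathbf{n})=\tau_5(T)(\mathbf{n}+\mathbf{3})$, which follows from Theorem \ref{thm:consistency} combined with Lemma \ref{eq:shiftedCoding}. Now apply Lemma \ref{lem:consistentOverlap} with $l=13$, $r=5$, and $r'=4$. A direct computation gives $s(l,r,r')=1+\lceil (4-6)/8\rceil_{+}=1$, so the lifted $4$-pattern is contained in the image under $\tau_5$ of a single-tile pattern in $T$. Unwinding Definition \ref{def:consistentPattern} for a $1$-pattern recovers the entire $13\times 13$ block $\tau(s)$ for some tile $s=T(i_0,j_0)$; hence $P$ sits inside four consecutive rows and columns of $\tau(s)$.

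Since $\tau(s)$ contains a $4$-pattern of zeros, Proposition \ref{proptau} forces $s\in S'=S\cup\{5\}$; Lemma \ref{propT} then constrains where $s$ can live in $T$: the tile $5$ lies only in rows with index at most $-1$, and the tiles of $S$ lie only in the zeroth row. Proposition \ref{proptau} further refines the internal picture: $\tau(5)$ is identically zero, while for $s\in S$ exactly the top nine rows of $\tau(s)$ are zero and the tenth row is uniformly $1$. In the second case, the row of $1$'s blocks any vertical extension of a zero block through it, so the $4$-pattern of zeros is vertically confined to the top nine internal rows of $\tau(s)$.

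Finally, convert internal coordinates of $\tau(s)$ back to row indices of $T'$. Lemma \ref{prop:tauOfPattern} gives $\tau(s)(a,b)=\tau_5(T)(8(i_0-1)+a,\,8(j_0-1)+b)$, and combining with the shift by $\mathbf{3}$ yields $\tau(s)(a,b)=\tau'(T)(8i_0-11+a,\,8j_0-11+b)$. If $s=5$ with $i_0\leq -1$, the bottom row of $P$ has $T'$-row index at most $8(-1)-11+13=-6$; if $s\in S$ with $i_0=0$, the bottom internal row of $P$ is at most $9$, giving $T'$-row index at most $9-11=-2$. In either case all four rows of $P$ have strictly negative row indices in $T'$, which is exactly the statement of the lemma. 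The only step requiring attention is this bookkeeping of the three nested shifts $\tau\mapsto\tau_5\mapsto\tau'$; no deeper combinatorial obstacle arises.
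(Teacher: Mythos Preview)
Your proof is correct and follows essentially the same route as the paper's: lift to $\tau_5(T)$ via the shift by $\mathbf{3}$, localize with Lemma~\ref{lem:consistentOverlap} (getting $s(13,5,4)=1$), invoke Proposition~\ref{proptau} and Lemma~\ref{propT} to pin down the tile and its row in $T$, and then translate internal rows of $\tau(s)$ back to row indices of $T'$. The only cosmetic difference is that you compute the index map $\tau(s)(a,b)=T'(8i_0-11+a,\,8j_0-11+b)$ directly from Lemma~\ref{prop:tauOfPattern}, whereas the paper phrases the same bookkeeping in terms of where the $8\times 8$ block $\tau'(T(m,n))$ sits inside $T'$; the resulting bounds ($\le -2$ in the $s\in S$ case, $\le -6$ in the $s=5$ case) agree.
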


\begin{proof}
It is an immediate consequence of equation~\eqref{reltau'tau5} that $P$ also sits as a 4--pattern in the tiling $\tau_5(T)$. Lemma~\ref{lem:consistentOverlap} applied with the parameters $l=13$, $r=5$ and $r'=4$ then implies that $P$ is contained in  the image $\tau_5(P')$ of a 1--pattern $P'$ sitting in $T$. Let then $(m,n)\in\Z^2$ be such that $P'(\bone)=T(m, n)$. From Definition~\ref{def:consistentPattern}, it holds that $\tau_5(P') = \tau(P'(\bone)) = \tau(T(m, n))$.

Since the zero 4--pattern $P$ is contained in $\tau(T(m, n))$, Proposition~\ref{proptau} implies that $T(m,n)\in S'$. Thus, from Lemma~\ref{propT}, it holds that $m\le 0$ and moreover  that $T(m,n)=5$ when $m<0$ and $T(m,n)\in S$ when $m=0$.

Note then these two observations:  on the one hand, $\tau'\left(T(m,n)\right)$ is the $8\times 8$ subsquare in the $13\times 13$ square $\tau(T(m, n))$ corresponding to the row and column indices between 4 and 11 (see equation~\eqref{deftau'}). On the other hand, it follows from  the way the tiling $\tau'(T)$ is defined in~\eqref{eq:codedTiling} that the $8\times 8$ square $\tau'\left(T(m,n)\right)$ sits between rows $-7$ and 0 if $m=0$ and has maximal row index $-8$ if $m\le -1$.

Since when $m=0$, the 4--pattern $P$ cannot overlap with rows 10 to 13 in $\tau(T(0,n))$ (cf.~Proposition~\ref{proptau}), the above two observations enable one to conclude that the pattern $P$ is contained in the region of the tiling $\tau'(T)$ with row index at most -2.
\end{proof}

In order to complete the proof of Theorem~\ref{thm:boundedDeficiency}, consider a 4--pattern $P$ in the region of the tiling $T'$ with non--negative row indices. From Lemma~\ref{lempatternzero4neg}, $P$ cannot be identically zero. Furthermore, Lemma~\ref{lem:consistentOverlap} implies here also that $P$ is contained in $\tau(T(m,n))$ for some $m, n\in\Z$. From the last claim in Proposition~\ref{proptau}, the pattern $\tau(T(m,n))$ appears in the portion of the Number Wall~\eqref{eq:finitePortion}, where all zero entries occur within squares with horizontal and vertical edges (this is Theorem~\ref{themzeroentries}).

Since there cannot be a zero 4--pattern in the region $m\ge 0$ of the tiling $T'$, the zeros in the pattern $P$ take the shape of a rectangle $\mathcal{D}$ obtained as the intersection between a square $\mathcal{C}$ with side length at most 3 and the $4\times 4$ square determined by $P$.

Assume that $\mathcal{C}$ is not entirely contained in the pattern $P$ as there is otherwise nothing more to prove. The rectangle $\mathcal{D}$  then admits a point, say $A$, lying in the interior of the $4\times 4$ square determined by $P$. Consider another 4--pattern $P'$ containing $\mathcal{D}$, one of which corners coincides with the point $A$. The same argument as previously shows, on the one hand that $P'$ is contained in the tiling $T'$ and, on the other, that all zero entries in $P'$ appear in the form of squares with side length at most $3$. This is enough to conclude that the rectangle $\mathcal{D}$ can be extended to a square with side length at most $3$ lying in the tiling $T'$.

This concludes the proof of Theorem~\ref{thm:boundedDeficiency}.
\end{proof}

\begin{thm}\label{thm:frameConstraints}
The tiling $T'$ satisfies the Frame Constraints.
\end{thm}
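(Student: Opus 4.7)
The Frame Constraints of Corollary~\ref{frameconstraints} are a \emph{local} property of the tiling: the value prescribed for $S_{m,n}$ depends only on entries $S_{m',n'}$ at bounded distance from $(m,n)$, the bound being controlled by the side length of any zero window touching $(m,n)$. By Theorem~\ref{thm:boundedDeficiency}, windows in the portion of $T'$ with non--negative row indices have side length at most~$3$, corresponding to deficiency $\delta\leq 4$. An inspection of the five cases of the recurrence in Corollary~\ref{frameconstraints} shows that for $\delta\leq 4$, the value of $S_{m,n}$ is determined by entries within a universal radius, say $C=6$, of $(m,n)$. The plan is to prove that every $13\times 13$ neighborhood in $T'$ coincides, up to translation, with a neighborhood already appearing inside the finite portion~\eqref{eq:finitePortion}; since that finite portion is by construction a bona fide segment of the Paper--Folding Number Wall, the Frame Constraints hold there, and hence at every point of $T'$.

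\textbf{Reduction to a finite pattern check.} The reduction rests on three ingredients. First, Theorem~\ref{thm:consistency} and equation~\eqref{reltau'tau5} identify $T'$ with a translate of $\tau_5(T)$. Second, Lemma~\ref{lem:consistentOverlap} with parameters $l=13$, $r=5$ and $r'=13$ gives $s(13,5,13)=2$, so every $13$--pattern in $\tau_5(T)$ sits inside the $\tau_5$--image of a mere $2$--pattern in $T$. Third, Proposition~\ref{remregion} guarantees that every $2$--pattern in $T$ is realised within the finite region $T|_{(\bmf,\bMf]}$. Composing these facts, every $13$--neighborhood in $T'$ is translation--equivalent to a patch inside $\tau_5(T|_{(\bmf,\bMf]})$, which by Theorem~\ref{thm:main} lies within~\eqref{eq:finitePortion}. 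As the Frame Constraints of Corollary~\ref{frameconstraints} hold at every interior point of any genuine Number Wall segment, they hold at $(m,n)$ for every $(m,n)$ with $m\geq 0$.

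\textbf{Initial conditions and negative rows.} The boundary conditions $S_{-1,n}=1$ and $S_{m,n}=0$ for $m\leq -2$ are verified separately. Lemma~\ref{propT}(1) shows that every row of $T$ with index $\leq -1$ consists of the single tile~$5$, whose image under $\tau$ is identically zero by Proposition~\ref{proptau}; this produces the required zeros on the rows of $T'$ sufficiently far below~$0$. The row of $1$'s at $m=-1$ originates from the tenth row of the $\tau$--images of the tiles of $S$ sitting on row~$0$ of $T$ (Proposition~\ref{proptau}), and a direct inspection of the strip $-2\leq m\leq -1$ of~\eqref{eq:finitePortion} completes the verification.

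\textbf{Main obstacle.} The principal technical point is the calibration of the locality radius $C$: it must be large enough to accommodate every entry occurring in the recurrences of Corollary~\ref{frameconstraints} under the deficiency bound $\delta\leq 4$, while being small enough that $s(13,5,2C+1)=2$, so that Proposition~\ref{remregion} (phrased for $2$--patterns only) applies without extension. The geometric analysis of Figure~\ref{[origwind]} shows that $C=6$ meets both requirements simultaneously, since the inner and outer frame entries together with the ratios $P,Q,R,S$ reach at most $\delta+2\leq 6$ units away from $(m,n)$. Were $\delta$ allowed to exceed~$4$, or were a larger locality radius forced, the argument would require strengthening Proposition~\ref{remregion} to $3$--patterns or larger by iterating the substitution~$\varphi$ over the verified region.
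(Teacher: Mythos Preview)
Your proposal is correct and follows essentially the same approach as the paper: use Theorem~\ref{thm:boundedDeficiency} to bound the locality radius of the Frame Constraints, apply Lemma~\ref{lem:consistentOverlap} to reduce the check to $2$--patterns in $T$, and then invoke Proposition~\ref{remregion} to confine all such patterns to the verified finite region~\eqref{eq:finitePortion}, where the Frame Constraints are known to hold. The only differences are cosmetic: the paper uses the tighter pattern size $r'=7$ (versus your $r'=13$, both giving $s(13,5,r')=2$), and you make the verification of the boundary rows $m\le -1$ explicit whereas the paper leaves this implicit.
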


\begin{proof}
By Theorem \ref{thm:boundedDeficiency} the maximal side length of a window lying the in part of $T'$ with non--negative row indices is 3. Therefore, by Corollary~\ref{frameconstraints}, the Frame Constraints are determined by patterns of size at most 7 (see also Figure~\ref{[origwind]}). Applying Lemma \ref{lem:consistentOverlap} again with $l=13$, $r=5$ and $r'=7$, every $7$--pattern in $\tau_5\left(T\right)$ (and therefore in $T'=\tau'\left(T\right)$ from~\eqref{reltau'tau5}) is contained in $\tau_5(P)$ for some $2$--pattern $P$ in $T$. From Proposition~\ref{remregion},  this pattern $P$ already appears in $T|_{(\bmf, \mathbf{M}]}$.

Also, from Theorem~\ref{thm:main}, the image of $T|_{(\bmf, \mathbf{M}]}$ under $\tau_5$ is, up to a translation by the vector $\mathbf{3}=(3,3)$, the restriction of the Number Wall~\eqref{eq:finitePortion} to the region $(8\bmf, 8\mathbf{M}]$. Note that from the values of $\bmf$ and $\mathbf{M}$ in~\eqref{defmM}, this restriction followed by a translation by $\mathbf{3}$ is clearly contained in~\eqref{eq:finitePortion}. Since the Frame Constraints are satisfied in the Number Wall~\eqref{eq:finitePortion}, this completes the proof of the theorem.
\end{proof}

Theorems~\ref{thm:boundedDeficiency} and~\ref{thm:frameConstraints} can be rephrased as follows~:

\begin{coro}
The tiling $T'$ is a Number Wall with deficiency 4.
\end{coro}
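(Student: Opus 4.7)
The plan is to combine the two preceding theorems with one small observation about the initial segment of the Paper--Folding sequence. First, since Theorem~\ref{thm:frameConstraints} shows that $T'$ satisfies the recurrence formulated in Corollary~\ref{frameconstraints}, the converse direction of that corollary immediately identifies $T'$ as the Number Wall of the sequence $\left(T'(0,n)\right)_{n\in\Z}$ sitting in its zeroth row. This settles the first half of the statement.

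For the upper bound on the deficiency, Theorem~\ref{thm:boundedDeficiency} asserts that every zero window of $T'$ lying in the half--plane of non--negative row indices has side length at most~$3$, which in the convention introduced just before Theorem~\ref{innerframe} is the statement that the corresponding window has deficiency at most~$4$. Because the Number Wall convention built into Corollary~\ref{frameconstraints} forces $S_{-1,n}=1$ and $S_{m,n}=0$ for every $m<-1$, the zero entries in the rows with index at most $-2$ do not form finite square windows in the sense of Theorem~\ref{themzeroentries}, so they impose no further constraint. Hence the deficiency of $T'$ is at most~$4$.

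For the matching lower bound it suffices to exhibit a single $3\times 3$ zero block in $T'$. Using $f_0=f_1=f_2=0$, $f_3=1$, $f_4=0$ together with the correspondence between Hankel minors and Number Wall entries recalled right after Theorem~\ref{themzeroentries}, the three nested principal Hankel minors of orders $1,2,3$ at the top--left corner of the Paper--Folding sequence all vanish, which in Number Wall coordinates produces the diagonal zeros $S_{0,0}=S_{1,1}=S_{2,2}=0$. By Theorem~\ref{themzeroentries} these three aligned zeros must sit inside a square window of side length at least~$3$. All three entries lie well inside the rectangular portion~\eqref{eq:finitePortion} on which $T'$ agrees with the Paper--Folding Number Wall by Theorem~\ref{thm:main}, so this window is genuinely present in $T'$. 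Combined with the previous paragraph this yields deficiency exactly~$4$. The only real obstacle is bookkeeping: one has to verify that the window produced by the vanishing minors is contained in the already--verified portion of $T'$ (and in particular well away from the rows of index below $-1$); beyond that, the argument is an immediate appeal to the preceding theorems.
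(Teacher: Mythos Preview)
Your proof is correct and follows the same approach as the paper, which presents this corollary as an immediate rephrasing of Theorems~\ref{thm:boundedDeficiency} and~\ref{thm:frameConstraints}. You are more explicit than the paper about the lower bound: the paper relies silently on the observation from \S\ref{strategy} (the vanishing of the three nested principal Hankel minors of the Paper--Folding sequence) together with Theorem~\ref{thm:main} to place the resulting $3\times 3$ window inside $T'$, exactly as you spell out.
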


From the last claim in Corollary~\ref{frameconstraints}, $T'$ is the Number Wall of the sequence sitting in its zeroth row. This sequence is now determined:

\begin{thm}\label{thm:verification}
The tiling $T'$ has the Paper--Folding sequence in its zeroth row.
\end{thm}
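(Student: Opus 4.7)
The plan is to extract a sub--substitution and a block--coding describing the zeroth row of $T'$, and then to semi--conjugate this system with the $(\psi,\rho)$ system generating the Paper--Folding sequence.

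First, I would isolate the structure of the zeroth row of $T$. By Lemma~\ref{propT}(2) the zeroth row takes values in $S$, and by Proposition~\ref{propvarphi}(2), for each $s\in S$ the bottom row of $\varphi(s)$ is a pair $(s_{1},s_{2})\in S\times S$, while the top row consists of copies of the tile~$5$ (which lies in the rows of $T$ with strictly negative index). Hence the restriction of $\varphi$ to the zeroth row is a genuine $2$--substitution $\sigma_{0}:S\to S^{2}$, $s\mapsto(s_{1},s_{2})$, and the zeroth row of $T$ coincides with the doubly--infinite orbit generated by $\sigma_{0}$ from the seeds $T(0,0)=1$ and $T(0,1)=2$. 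Setting $R(s):=\bigl(\tau'(s)(8,k)\bigr)_{k=1}^{8}\in\F_{3}^{8}$, formula~\eqref{eq:codedTiling} implies that the zeroth row of $T'=\tau'(T)$ is the concatenation of the length--$8$ blocks $R(T(0,j))$ for $j\in\Z$.

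Second, I would compare this data with $(\psi,\rho)$. The doubly--infinite Paper--Folding sequence, read in blocks of length $8$, is generated by the $2$--substitution $\psi^{3}:\{0,1,2,3\}\to\{0,1,2,3\}^{8}$ followed by componentwise $\rho$, starting from the seeds $T_{\psi}(0)=2$ and $T_{\psi}(1)=0$. I would exhibit a map $\iota:S\to\{0,1,2,3\}$, necessarily two--to--one since $|S|=8$, satisfying the identities
\[
\iota\circ\sigma_{0}=\psi\circ\iota,\qquad R=\rho\circ\psi^{3}\circ\iota,\qquad \iota(1)=2,\ \iota(2)=0,
\]
where $\iota$ is applied componentwise in the first identity and $\rho$ componentwise in the second. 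Once these are established, a straightforward induction on $|j|$, using the first identity and the compatible seeds, yields $\iota(T(0,j))=T_{\psi}(j)$ for every $j\in\Z$. The second identity then gives $R(T(0,j))=\rho(\psi^{3}(T_{\psi}(j)))$, which is exactly the $j$--th length--$8$ block of the Paper--Folding sequence; concatenation over $j$ proves Theorem~\ref{thm:verification}.

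Each of these three identities is a finite check on the eight tiles of $S$, relying on the explicit data for $\varphi$ in \texttt{dragon\_tetrads\_B} and for $\tau$ in \texttt{dragon\_codes\_B.dat} at~\cite{L17}. The main obstacle is the construction of $\iota$: the values $\iota(1)=2$ and $\iota(2)=0$ are forced by prolongability (Definitions~\ref{def:prolongable} and~\ref{def:oProlongable}), and iterating the first identity along $\sigma_{0}$ forces the remaining six values; one must then check that this assignment is globally consistent (no contradiction arising from the two--to--one nature of $\iota$) and simultaneously satisfies the block identity $R=\rho\circ\psi^{3}\circ\iota$. An alternative that avoids constructing $\iota$ explicitly is to exploit the $2$--automatic nature of both sequences: form the product automaton of $(\sigma_{0},R)$ with the Paper--Folding automaton and verify their outputs agree on a prefix longer than the product state space, which the finite portion~\eqref{eq:finitePortion} comfortably accommodates.
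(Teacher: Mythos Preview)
Your proposal is correct and follows essentially the same route as the paper: restrict to the zeroth row, read off the induced $2$--substitution on the eight tiles of $S$ together with the length--$8$ block coding coming from the eighth row of $\tau'$, and semi--conjugate this with the Paper--Folding system $(\psi,\rho)$ via a two--to--one map $\iota$ (the paper realises $\iota$ concretely by relabelling $S$ as $\{0,\dots,7\}$ in Figure~\ref{[dragcode]} and then reducing modulo $4$). The only cosmetic difference is that the paper packages the two finite checks you separate---$\iota\circ\sigma_{0}=\psi\circ\iota$ and $R=\rho\circ\psi^{3}\circ\iota$---into the single identity $\tau'(\varphi(s))=\rho(\psi^{4}(s))$ on the reduced alphabet, together with $T_{(\psi^{4},(2,0))}=T_{(\psi,(2,0))}$.
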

\begin{proof}
From Point 3 in Lemma~\ref{propT},  only the eight tiles in $S$ appear in the zeroth row of $T'$. Figure \ref{[dragcode]} tabulates these tiles, their images under the substitution $\varphi$ (in other words, the values of $s_1$ and $s_2$ in~\eqref{s1s2}) and also their images under the codings $\tau$ and $\tau'$ restricted to the zeroth line (which is the $11^{th}$ row of $\tau$ and thus the $8^{th}$ row of $\tau'$ --- this follows immediately from the definition of $\tau'$ in~\eqref{deftau'} and from the fact that, given an integer $n$, $\tau'(T(0,n))$ sits between rows $-7$ and 0 in the tiling $T'$).
For simplicity, the original eight tiles are mapped bijectively to $\left\{0,\ldots,7\right\}$ and the above restrictions of $\varphi$, $\tau$ and $\tau'$ are still denoted in the same way.

\begin{figure}[htb!]
\centering
\begin{tabular*}{0.9\textwidth}{@{\extracolsep{\fill}}|cccccc|}
\hline
  & Tile & Coded tile & Substitution $\varphi$ & Coding $\tau, \tau'$   &\bigstrut \\
\hline
  & 2  & 0 & 0\ 2 & 110\boxed{00100110}00 & \bigstrut[t] \\
  & 13 & 1 & 0\ 3 & 110\boxed{00100111}00 & \\
  & 7  & 2 & 1\ 6 & 110\boxed{00110110}00 & \\
  & 12 & 3 & 1\ 7 & 110\boxed{00110111}00 & \\
  & 20 & 4 & 4\ 2 & 111\boxed{00100110}00 & \\
  & 6  & 5 & 4\ 3 & 111\boxed{00100111}00 & \\
  & 1  & 6 & 5\ 6 & 111\boxed{00110110}00 & \\
  & 29 & 7 & 5\ 7 & 111\boxed{00110111}00 & \bigstrut[b] \\
\hline
\end{tabular*}
\caption{The generated tiling restricted to the zeroth row: substitutions and codings (the original eight tiles are mapped bijectively to $\left\{0,\ldots,7\right\}$).}
\label{[dragcode]}
\end{figure}

Using coded tiles, the zeroth row of the tiling $T'$ is thus the $\Z$--tiling $\tau'\left(T_{(\varphi, (6,0)})\right)$ (this follows from  the first two initial conditions in~\eqref{initcond}, which are expressed in the language of non--coded tiles).

Looking at the boxed segments  in Figure \ref{[dragcode]}  that represent the coding $\tau'$ restricted to the zeroth line reveals that mapping the (coded) tiles $4,5,6,7$ to $0,1,2,3$ respectively in the substitution $\varphi$  is consistent with their definition on $0,1,2,3$ (for instance, $\varphi(4) = 4\; 2$ and mapping 4 to 0 transforms the tiling $4\; 2$ to $0\; 2$, which is indeed $\varphi(0)$). Furthermore, this mapping leaves unchanged the images of the tiles under $\tau'$ (for instance, $\tau'(4)=\tau'(0)$). This implies in particular that the zeroth row of $T'$ is also the $\Z$--tiling $\tau'\left(T_{(\varphi, (2,0)})\right)$.

A further comparison with $\psi$ and $\rho$ introduced in Example \ref{es:paperFolding} shows that, in fact,
\[
\tau'(\varphi(s)) = \rho\left(\psi^4(s)\right)
\]
for each $s\in\left\{0,1,2,3\right\}$, and therefore for all $s\in\left\{0,\dots,7\right\}$ upon identifying tiles as above when needed.

It is elementary to verify that $$T_{\left(\psi^4,(2,0)\right)} = T_{\left(\psi,(2,0)\right)}.$$ Since the Paper--Folding sequence is $\rho\left(T_{(\psi,(2,0))}\right)$ (cf. Example \ref{es:paperFolding}) ,  this concludes the proof that the zeroth row of the generated Number Wall is the Paper--Folding sequence.
\end{proof}

\section{The $t$--adic Littlewood Conjecture in other Characteristics}

\subsection{Sequences with Small Deficiency over Finite Fields}

The aim of this section is to discuss to what extent the value of the deficiency appearing in Theorem~\ref{thmderive} (viz.~4) can be improved and/or generalized to other characteristics.

It is easily seen that one cannot have a Number Wall over $\F_3$ with no zero entry\footnote{in the rows with non--negative indices. This assumption will always be implicit in what follows.}. This amounts to saying that any infinite Hankel matrix over $\F_3$ admits a \emph{singular connected minor} (that is, a singular square submatrix whose row and column indices are consecutive). This prompts the following more general open problem, which the authors have not been able to solve:

\begin{qst}\label{qst1}
Let $\F_q$ be a finite field with $q\ge 2$ elements. Does there exist an integer $n(q)\ge 1$ such that any square matrix with dimensions $n(q)\times n(q)$ and with entries in $\F_q$ admits a  singular connected minor (as defined above)?
\end{qst}

In other words, Question~\ref{qst1} amounts to asking  how big a hyperinvertible matrix can be over a finite field, where hyperinvertibility of a matrix means that all \emph{connected} square submatrices are in\-ver\-ti\-ble. Note that when the field is infinite, the well--known class of Cauchy matrices provides examples of arbitrarily large hyperinvertible matrices. Also, Question~\ref{qst1} is well--understood in the case that one does not restrict oneself to the case of \emph{connected} minors: it is indeed proved in~\cite{annalee} that there is no $q\times (q-1)$ matrix over $\F_q$ whose (non--necessarily connected) minors of order $q-1$ and $q-2$ are all different from zero.

\paragraph{} On another front, Theorem~\ref{thmderive} raises the question so as to whether there exists a sequence with deficiency smaller than 4 over $\F_3$ and, possibly, with optimal value 2. This is indeed the case, and the discovery of a sequence generating a Number Wall with only isolated zeros, the \emph{Pagoda sequence} $\left(\pi_n\right)_{n\in\Z}$, was made in~\cite{L97} (see also~\cite{L09} for a fuller account; the origin of the name of the sequence is also explained in the latter reference). It is defined from the Paper--Folding sequence as follows: for any $n\in\Z$, $$\pi_n\,=\, f_{n+1}-f_{n-1}.$$ Tiling the Number Wall obtained from this sequence confirms the above claim:

\begin{thm}~\label{thmPagoda}
The Number Wall of the Pagoda sequence over $\F_3$ has only isolated zero entries.
\end{thm}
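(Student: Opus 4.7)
The plan is to mirror the computer--assisted strategy developed in Sections~\ref{sec:putative} and~\ref{sec:verify} for the Paper--Folding sequence, but now applied to the Pagoda sequence $\left(\pi_n\right)_{n\in\Z}$ over $\F_3$. Since $\left(f_n\right)_{n\in\Z}$ is $2$--automatic and the operations $n\mapsto n+1$, $n\mapsto n-1$ and pointwise subtraction all preserve $2$--automaticity, the Pagoda sequence is itself $2$--automatic. It can therefore reasonably be expected that its Number Wall $S^{\pi}=\left(S^{\pi}_{m,n}\right)_{m,n\in\Z}$ admits a description as an automatic $\Z^2$--tiling; the claim of the theorem is then an additional constraint to be checked on the resulting tiling.

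The concrete steps are the following. First, use the Wall Builder Algorithm described in Figure~\ref{fig:algo1} to generate a large trapezoidal portion of $S^\pi$ from a sufficiently long initial segment of $\left(\pi_n\right)_{n\in\Z}$, which itself is obtained from~\eqref{rpfseq} via the identity $\pi_n=f_{n+1}-f_{n-1}$ working in $\F_3$. Second, feed this portion into the tiling algorithm of Figure~\ref{fig:algo2} with suitably chosen parameters $k,l,r$ (with $k=2$ and a modest overlap $r$; the values of $l$ and the size of the computed window are tuning parameters, and are expected to be noticeably smaller than those used in Theorem~\ref{thm:main} since the windows to be accommodated here should have deficiency at most~2 rather than~4). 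A successful run yields a $2$--substitution $\varphi^\pi$ over a finite alphabet $\Sigma^\pi$, an $r$--consistent $l$--coding $\tau^\pi$ and a prolongable $2^2$--tuple $\bs^\pi$ such that $T'^{,\pi}:=\tau^\pi_r\bigl(T_{(\varphi^\pi,\bs^\pi)}\bigr)$ agrees with the computed portion of $S^\pi$.

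Third, verify the three properties needed to upgrade $T'^{,\pi}$ into the full Number Wall $S^\pi$, following Section~\ref{sec:verify} line by line:
\begin{enumerate}
\item Pass~3 of the algorithm, combined with Lemma~\ref{lem:completeTetrad} and Corollary~\ref{latest}, certifies $r$--consistency of $\tau^\pi$ over all of $T_{(\varphi^\pi,\bs^\pi)}$.
\item Inspection of the coding $\tau^\pi$ shows that in each coded tile there is no $2$--pattern comprising only zero entries, except possibly in tiles which live in the region with negative row indices of the tiling (this is the analogue of Proposition~\ref{proptau} and Lemma~\ref{propT}). Together with Lemma~\ref{lem:consistentOverlap} applied with $r'=2$, this yields that zero entries in the part of $T'^{,\pi}$ with non--negative row indices are isolated: no window has side length $\geq 1$, i.e., the deficiency is at most~2.
\item Using Lemma~\ref{lem:consistentOverlap} with $r'$ chosen as the maximum pattern size controlling the Frame Constraints for windows of side length at most~1 (which is much smaller than in Theorem~\ref{thm:frameConstraints}), and using the fact that every such pattern in $T'^{,\pi}$ already appears inside the computed portion of $S^\pi$, deduce that $T'^{,\pi}$ satisfies the Frame Constraints of Corollary~\ref{frameconstraints}. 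By the converse part of that corollary, $T'^{,\pi}$ is the Number Wall of its zeroth row.
\end{enumerate}

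Fourth, check that the zeroth row of $T'^{,\pi}$ is indeed the Pagoda sequence. As in the proof of Theorem~\ref{thm:verification}, one identifies the subset of tiles from $\Sigma^\pi$ that appears in the zeroth row, tabulates their images under $\varphi^\pi$ and $\tau^\pi$ restricted to the zeroth line, and compares the resulting $1$--dimensional substitution and coding with a known $2$--automatic presentation of $\left(\pi_n\right)_{n\in\Z}$ (such a presentation exists by the general closure properties of $2$--automatic sequences and can be written down explicitly from the presentation of $\left(f_n\right)_{n\in\Z}$ given in Figure~\ref{[dragmorf]}, possibly after passing to an iterated substitution $\psi^N$ to align block lengths).

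The main obstacle is the same as in the Paper--Folding case: the algorithm of Figure~\ref{fig:algo2} succeeds only if the parameters $(k,l,r)$ and the size of the computed rectangle are chosen large enough for the emerging automaton to close up (Passes~1--3 all succeed), yet small enough to remain computationally feasible. Finding this sweet spot, and certifying the absence of a $2\times 2$ zero block by finite inspection of the coding, are the two tasks that genuinely rely on the computer; everything else is a routine adaptation of the arguments of Section~\ref{sec:verify}.
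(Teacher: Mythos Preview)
Your proposal is correct and follows exactly the route the paper indicates: the paper states that the proof of Theorem~\ref{thmPagoda} ``proceeds along the same lines as the proof of Theorem~\ref{thm:main}'' and defers the details to the accompanying code, and your outline reproduces precisely those steps (build a large portion of $S^\pi$, run the tiling algorithm of Figure~\ref{fig:algo2}, and replay the verifications of Section~\ref{sec:verify}). One small slip: in step~(2) you write ``no window has side length $\geq 1$'', but you mean $\geq 2$ (an isolated zero is a window of side length~$1$); your conclusion ``deficiency at most~$2$'' is the right one.
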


The proof of Theorem \ref{thmPagoda} proceeds along the same lines as the proof of Theorem \ref{thm:main} and will not be detailed here. 
These details can be made explicit from the codes available at~\cite{L17, N18} which deal, not only with the case of the Paper--Folding sequence, but also with that of the Pagoda sequence.
The main missing ingredient is the verification of the analogue of Theorem \ref{thm:verification} for the Pagoda sequence. This requires a comparison between two given automatic sequences, which in the case of the Paper--Folding sequence could be done explicitly due to the very simple substitution and coding that generate it. 
In view of Theorem~\ref{themdeficiency}, Theorem~\ref{thmPagoda} implies the existence of a formal Laurent power series $\Xi$ such that 
$$\inf_{N\neq 0, k\ge 0} \left|N\right|\cdot \left|\langle N t^k \Xi\rangle\right|\, = \, 2^{-2}.$$ 
This equality corresponds 
to the ``worst'' possible case when $t$--LC fails over $\F_3$.

\paragraph{} As a matter of fact, a generalisation to other characteristics is suggested by computer evidence:

\begin{conj}\label{conjfinal}
The Paper--Folding and Pagoda sequences seen as sequences over a finite field $\F_p$ have bounded deficiency 4 and 2 respectively for all prime $p\equiv 3 \pmod 4$, and unbounded deficiency for all other primes.
\end{conj}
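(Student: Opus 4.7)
The plan is to treat the two directions separately. For the positive direction (bounded deficiency when $p\equiv 3\pmod 4$), the strategy is to promote the construction of Sections~\ref{sec:putative}--\ref{sec:verify} from characteristic $3$ to a characteristic--free analysis and then specialize. For the negative direction (unbounded deficiency when $p=2$ or $p\equiv 1\pmod 4$), the strategy is to exhibit, for each arbitrarily large $\delta$, an explicit $\delta\times\delta$ zero window in the Number Wall over $\F_p$ by producing linearly dependent columns in a Hankel matrix via algebraic identities that fail in characteristic $3$.

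For the positive direction, I would first compute the Number Wall of the Paper--Folding (resp.~Pagoda) sequence over $\Z$, viewing the Toeplitz determinants~\eqref{defnumberwall} as integer--valued. Since the Desnanot--Jacobi recursion is a divisibility identity valid over $\Z$, and since every entry in the Frame Constraints of Corollary~\ref{frameconstraints} is obtained by exact integer divisions, the resulting \emph{integer Number Wall} $S^{\Z}$ specializes to the Number Wall over $\F_p$ upon reducing entries modulo $p$. The aim is then to prove a structural factorization of the form $S^{\Z}_{m,n} = \varepsilon\cdot 2^{a(m,n)} \cdot u(m,n)$, where $\varepsilon\in\{\pm 1\}$, $a(m,n)\geq 0$, and $u(m,n)$ is an odd integer all of whose prime divisors are congruent to $1$ modulo $4$. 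Such a factorization would immediately imply that the zero patterns modulo $p$ are identical for every prime $p\equiv 3\pmod 4$, hence identical to the $\F_3$--pattern analyzed in Theorem~\ref{thm:main}, forcing deficiency $4$ and $2$ respectively. The mechanism responsible for the factorization should be the $2$--automatic nature of the Paper--Folding sequence together with its Mahler--type functional equation; a natural way to extract it is to refine the substitution $\varphi$ and coding $\tau$ of Theorem~\ref{thm:main} into an \emph{integer--valued} substitution tiling whose local update rules track the $2$--adic valuation and the residue modulo $\Z[i]$ of each entry.

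For the negative direction, the plan is to use the fact that $-1$ is a square in $\F_p$ if and only if $p=2$ or $p\equiv 1\pmod 4$, together with the generating--function identity satisfied by $\Phi = \sum f_n t^{-n}$. Over $\F_3$, the formal series $\Phi$ satisfies a Mahler--type relation of the form $P(t)\,\Phi(t^2) + Q(t)\,\Phi(t) + R(t) = 0$ with $P,Q,R\in\F_3[t]$, and the deficiency analysis ultimately reduces to the boundedness of the degrees of partial quotients in the continued fraction expansions of the Laurent series $t^k\Phi$ (see Remark~\ref{rem:continuedFraction}). Over $\F_p$ with $p\equiv 1\pmod 4$, the coefficients of an analogous functional equation acquire a factorization through $\F_p[i]$, producing pairs of conjugate solutions and, correspondingly, extra vanishing relations among translates of $\Phi$. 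I would then turn such a relation into an explicit family of polynomials $N_\delta\in\F_p[t]$, of degrees $h_\delta\to\infty$, satisfying $\lvert\langle N_\delta t^{k_\delta}\Phi\rangle\rvert < p^{-(h_\delta+\delta)}$; by the equivalence~\eqref{equivdeduite}, each such approximation yields a $\delta\times\delta$ zero window in the Number Wall, and the conjunction of the whole family produces unbounded deficiency. An analogous construction should work for the Pagoda sequence, with the advantage that its Number Wall has only isolated zeros over $\F_3$, so any enlargement of a zero region signals a genuine arithmetic obstruction specific to the characteristic.

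The hard part is the positive direction, specifically the conjectural factorization $S^{\Z}_{m,n}=\varepsilon\cdot 2^{a(m,n)}\cdot u(m,n)$ with the control on odd prime divisors of $u(m,n)$. Indeed, even for the Thue--Morse sequence the analogous question over $\Z$ is only partially solved by the sixteen coupled recurrences of~\cite{APWW98}, and for the Paper--Folding sequence the only relevant result of Coons--Vrbik~\cite{CV12} is finitary. The proof of Theorem~\ref{thm:main} presented above is essentially equivalent to performing the factorization modulo $3$ via a finite substitution rule with $2353$ tiles; turning this into a statement valid at every prime would require either exhibiting a single \emph{arithmetic} substitution that refines $\varphi$ and whose alphabet encodes enough $p$--adic information, or identifying a closed--form determinant formula à la Kamae--Tamura--Wen~\cite{KTW99}. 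A natural intermediate step, which should be within reach of the present techniques, is to verify the conjecture computationally for finitely many additional primes (say $p=7,11,19,23$ for the Paper--Folding sequence and $p=7,11$ for the Pagoda sequence) by running the algorithms of Figures~\ref{fig:algo1}--\ref{fig:algo2} in each characteristic; uniform agreement of the resulting substitution alphabets, suitably normalized, would then provide strong evidence that the arithmetic substitution indeed exists and would guide its explicit construction.
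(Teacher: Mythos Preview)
This statement is labelled \emph{Conjecture} in the paper, and the paper does not prove it: the surrounding text explains that it has only been checked computationally on finite portions of the Number Walls, and that even verifying it for a single further prime such as $p=7$ appears to require on the order of $1.4$ million tiles. So there is no ``paper's own proof'' to compare against; your submission is a proof \emph{plan} for an open problem, and I evaluate it as such.

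On the positive direction, your central hypothesis---that every nonzero integer Toeplitz determinant $S^{\Z}_{m,n}$ has the form $\pm 2^{a}\cdot u$ with every odd prime factor of $u$ congruent to $1\pmod 4$---is much stronger than what the conjecture asserts. It would force the zero pattern of the Number Wall over $\F_p$ to coincide \emph{exactly} with the zero pattern over $\Z$ for every prime $p\equiv 3\pmod 4$, whereas the conjecture only requires the maximal window size to be the same. You give no evidence for this factorisation, and it is not clear why the $2$--automaticity of the Paper--Folding sequence or its Mahler equation should constrain the odd prime divisors of its Hankel determinants in this way. This is not a minor technicality: it is the entire content of the positive direction, and at present it is a second conjecture layered on top of the first.

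On the negative direction, the case $p=2$ is already handled in the paper (Section~\ref{sec:charTwo}): $\Phi$ is quadratic over $\QQp$ with $p=2$, and De~Mathan--Teuli\'e then gives $t$--LC, hence unbounded deficiency via Theorem~\ref{themdeficiency}. For $p\equiv 1\pmod 4$, your proposed mechanism is not well defined. You write that ``the coefficients of an analogous functional equation acquire a factorisation through $\F_p[i]$'', but when $p\equiv 1\pmod 4$ the ring $\F_p[i]$ is $\F_p\times\F_p$, not a field extension, and the Mahler equation satisfied by $\Phi$ has integer coefficients and holds verbatim in every characteristic. You have not exhibited any relation among the translates $t^k\Phi$ that is specific to $p\equiv 1\pmod 4$, nor any candidate for the polynomials $N_\delta$. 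As written, this part of the plan is a restatement of the goal rather than a route to it.

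Your final paragraph---running the algorithms of Figures~\ref{fig:algo1}--\ref{fig:algo2} for further primes---is exactly what the paper itself suggests and reports partial progress on; it is reasonable as evidence--gathering but is not a step towards a proof.
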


Conjecture~\ref{conjfinal} has been checked extensively by computer inspection of the Number Walls of the sequences under consideration. For instance, in the case of the Paper--Folding sequence, it has been verified in finite $3000\times 3000$ segments of its Number Walls over $\F_p$ for all $p\le 101$. The difficulty in validating this conjecture for a \emph{given} characteristic is that the number of tiles needed to put in place the strategy exposed in Section~\ref{strategy} increases very quickly with the characteristic (for instance, preliminary attempts to construct a Pagoda Wall over $\F_7$ indicate the putative existence of a tiling made of 1.4 million tiles).

If indeed true, Conjecture~\ref{conjfinal} would imply that the $t$--adic Littlewood Conjecture fails (at least) over any field with characteristic a prime congruent to 3 modulo 4.

\subsection{On the Laurent Series of the Paper--Folding Sequence in a Field with Characteristic 2}\label{sec:charTwo}

The Hankel matrices of the Paper--Folding and Pagoda sequences are much better understood over $\mathbb{K}=\bbf_2$. This follows from the fact that the Laurent series they define  in $\bbf_2\left(\left(t^{-1}\right)\right)$ are both quadratic. Indeed, note that
$f_{2n+1} = n$ modulo 2 and $f_{2n}=f_n$ for every $n\in\bbn$. Therefore,
\begin{align*}
\Phi(t) & = \sum_{n > 0} f_n t^{-n} \\
& = \sum_{n > 0} f_{2n} t^{-2n} + t^{-1} \sum_{n \geq 0} f_{2n+1} t^{-2n} \\
& = \sum_{n > 0} f_n t^{-2n} + t^{-1} \sum_{n \geq 0} n t^{-2n} \\
& = \Phi\left(t^2\right) - t^{-3} \sum_{n\ge 1} t^{-4n}\\
& = \Phi\left(t^2\right) - \frac{t^{-3}}{1 - t^{-4}}\cdotp
\end{align*}
This implies that the Laurent series $\Phi(t)$ satisfies the following quadratic equation over $\bbf_2\left(t\right)$:
\[
\Phi\left(t\right)^2 + \Phi(t) + \frac{t}{1 + t^4} = 0.
\]

Similarly for $\Pi(t) = \sum_{n > 0} \pi_n t^{-n} $,  one obtains from the relations  $f_0=f_1=0$ that
\[
\Pi\left(t\right) = \sum_{n > 0} \left(f_{n+1} - f_{n-1}\right) t^{-n} = \left(t-t^{-1}\right)\Phi(t).
\]
Thus, $\Pi(t)$  satisfies the quadratic equation
\[
\Pi(t)^2+\frac{1+t^2}{t}\Pi(t)+\frac{1}{t}=0.
\]
In the same paper as where the $p$--adic Littlewood Conjecture was first stated \cite{MT04}, De Mathan and Teuli\'e established that $t$--LC holds for quadratic irrational power series. This is therefore in particular the case for the series $\Phi$ and $\Pi$ above. Of course, this claim extends naturally to any field with characteristic 2.

It should also be noted that much more is currently known about the occurrence of windows in the Number Walls of quadratic series --- see for further details the paper by Kemarsky, Paulin and Shapira~\cite{KPS17}, which uses dynamics on Bruhat--Tits tree. The connections to the present work are explained in~\cite[p.5]{KPS17}.

\newpage

\section*{Appendix~: Implementation and Code}\label{sec:implementation}
\addcontentsline{toc}{section}{Appendix~: Implementation and Code}

It is worthwhile to mention several matters related to the implementation of the algorithms des\-cri\-bed in Figures \ref{fig:algo1} and \ref{fig:algo2}. The complete Magma program is contained in the file \texttt{dragon\_wall\_B.mag} available at~\cite{L17}. The results have been confirmed by an independent implementation in Sage,
for which the program is available at \cite{N18} (\footnote{It should be noted that both in~\cite{L17} and in~\cite{N18}, the \emph{Paper--Folding sequence} is referred to as the \emph{Dragon sequence}.}).

\subsection*{The Wall Builder}\label{sec:tileBuilder}
The wall builder described in Figure~\ref{fig:algo1} is implemented as a Magma program
{\tt procedure NumberWall ($\sim$seq, mlo, mhi, nlo, nhi, $\sim$wal)},
where {\tt seq} and {\tt wal} hold respectively the sequence $S_{0, n}$ and the wall entries $S_{m,n}$,
and {\tt mlo} and {\tt mhi} (resp.~{\tt nlo} and {\tt nhi}) the ranges of $m$ (resp.~of $n$) in the output segment.
Initial rows $m \le 0$ are determined via Definition \ref{defnumberwall}. The
variable number ${\tt mlo}$ of rows of the top infinite zero part is asserted to be less than or equal to $-2$,
and serves as a sentinel window that allows conveniently for subsequent tiling implementation requirements.
\par

The natural boundary of a wall segment with, say,
$0 \le m \le m^{\text{hi}}$ and $0 \le n \le n^{\text{hi}}$ is trapezoidal,
descending from a sequence segment of length $2m^{\text{hi}} + n^{\text{hi}}$
at $m = 0$ to length $n^{\text{hi}}$ at $m = m^{\text{hi}}$ (see also~\S\ref{buildingfinite} for details).
To avoid complicated program logic where square window frames cross
the boundary, two further effectively infinite sentinel windows are attached
along left and right segment edges.
This permits filling a rectangular boundary with lengths roughly
$m^{\text{hi}}\times\left(2m^{\text{hi}} + n^{\text{hi}}\right)$,
partially containing spurious entries which can ultimately be ignored.
Finally, after pruning to lengths $m^{\text{hi}}\times n^{\text{hi}}$, the rectangle contains
only valid entries.

\subsection*{Counting Deficiencies in a Wall}\label{sec:countingDeficiencies}
\sloppy A separate Magma program
{\tt procedure WallDeficiencies ($\sim$wal, mlo, mhi, nlo, nhi, $\sim$mulset)}
computes as a multiset {\tt mulset} the number of windows of each deficiency
$\delta=g+1$ (where $g$ is the side length of a window). Broken windows (where the pane crosses the boundary of the segment)
are represented temporarily by $\delta = -1$.
Not all cases with $\delta \in \{-1, 1\}$ are recorded; the purpose is to accurately detect
all visible windows occurring properly within the region. The algorithm utilises
a subset of the method discussed in Section~\ref{sec:tileBuilder}.

\subsection*{Finding Patterns in Sequences and Tilings}\label{sec:PlanarTiling}
The tile builder described in Figure~\ref{fig:algo2} is implemented as
{\tt procedure SquareTiling ($\sim$tab, mlo, mhi, nlo, nhi, tel, cid,
$\sim$codes, $\sim$tetrads, $\sim$stab)} \footnote{In the codes~\cite{L17, N18}, a \emph{tile} is referred to as a \emph{state} and a \emph{substitution} as an \emph{inflation}.}. Here, with the notation of Theorem~\ref{thm:main}, 
\begin{itemize}
\item {\tt tab} holds the wall $S_{m,n}$;
\item {\tt codes} returns the images $\tau(s)$ of the tiles $s\in\Sigma$ under the 13--coding $\tau$;
\item {\tt stab} returns the tiling $T$ restricted to the region $\mathcal{R}$;
\item {\tt tetrads} returns the images under the 2--substitution $\varphi$ of the 2353 tiles $s\in\Sigma$ followed by the rest of the 2--patterns in {\tt stab} that are not obtained as such images;
\item {\tt tel}, which stands for \emph{tile edge length}, is the value taken by $l-1$, where $l$ is the parameter used to refer to the coding $\tau$ as an $l$--coding (thus, {\tt tel}=$12$ for the coding $\tau$ in Theorem~\ref{thm:main}). In practice, the parameter {\tt tel} is restricted to even values;
\item {\tt cid} is the distance between the centers of two overlapping encoded tiles $\tau(s)$ and $\tau(s')$ in the tiling $T'$ (from Figure~\ref{[wallseg]} for instance, {\tt cid}=8 in the case under consideration). In practice, the parameter {\tt cid} is also restricted to even values.
\end{itemize}
Note that the value of the parameter $r$ when referring to the coding $\tau$ as being $r$--consistent is then $r={\tt tel}-{\tt cid}+1$ (hence, $r=5$ in Theorem~\ref{thm:main}). It is the width of overlap between two adjacent encoded tiles.

\subsection*{Adjusting some Parameters}\label{sec:parameters}

As the 2--dimensional substitution $\varphi$ is meant to generate the Paper--Folding sequence along the zeroth row of the Number Wall, and as the Paper--Folding sequence can be obtained as a one dimensional 2--substitution followed by a coding --- see Example~\ref{es:paperFolding} ---, it is natural to impose that $\varphi$ should be a $k$--substitution with $k=2$.

\paragraph{}
The translation by the vector $\mathbf{3}=(3,3)$ in the definition of the coding $\tau'$ in~\eqref{deftau'} is due to the fact that the algorithm described in Figure~\ref{fig:algo2} is actually run in~\cite{L17, N18} with a slightly different definition of the coding $\tau_r$ defined in~\eqref{eq:defnTauTag}; namely, with the coding
\begin{equation}\label{deftau_rbis}
\tilde{\tau}_r(s) = \tau(s)|_{\{u, \dots, v\}^d}.
\end{equation}
Here, $s$ is a tile in $\Sigma$ and  $$u=\left \lfloor\frac{l+1}{2}\right\rfloor -\left\lceil \frac{l-r}{2}\right\rceil +1 \qquad \mbox{ and } \qquad v=\left \lfloor\frac{l+1}{2}\right\rfloor +\left\lfloor \frac{l-r}{2}\right\rfloor$$ are integers ($\lfloor x \rfloor$ denotes the integer part of a real number $x$). Note that when $l=13$ and $r=5$ as in Theorem~\ref{thm:main}, $u=4$ and $v=11$. Definition~\eqref{deftau_rbis} ensures that the center of the square with side length $l-r$ determined by $\{u, \dots, v\}^d$ coincides with a point nearest to the center of the square $ \tau(s)$ it is contained in (which square has side length $l$). From Lemma~\ref{eq:shiftedCoding}, the resulting tiling $\tilde{\tau}_r(T)$  differs from the tiling $\tau_r(T)$  by a translation by the vector $\mathbf{j}\in\Z^d$ all of whose components are equal to $u-1$.

In the case of Theorem~\ref{thm:main}, the coding~\eqref{deftau_rbis} presents the advantage of providing a tiling $\tilde{\tau}_r(T)$ which coincides exactly with the Number Wall under consideration. The theory developed in Section~\ref{sub:overlaps} with the coding $\tau_r$ can nevertheless be seen as more robust. Indeed, on the one hand, the choice of the top--left square considered in the definition of $\tau_r$ in~\eqref{eq:defnTauTag} is more natural at least in the case of $\N^2$--tilings. On the other, the choice of a point nearest to the center of a square is not necessarily unique depending on the parity of $l$ and $r$ whereas the top--left subsquare with a predefined side length  is always unambiguously defined, given a square.

\subsection*{Canonical Order in a Two Dimensional Array}\label{sec:canonical}

One matter of a purely algorithmic nature is the ordering of the tiles in the set $\Sigma=\{1, \dots, 2353\}$. In Pass~1 of the algorithm described in Figure~\ref{fig:algo2}, not much importance is given to this ordering. However, in practice, imposing a certain canonical label to the elements of $\Sigma$ is of crucial importance in order to check the correctness of the algorithms and to interpret and exploit their outputs. Due to its theoretical irrelevance, this step has not been included in Figure~\ref{fig:algo2} (it should otherwise be considered as the last Pass of the algorithm).

A general method of assigning a canonical index to a given tile in $\Sigma$ is based on the ``matrix Manhattan metric'' and is applicable to symmetric segments in any dimension. This method is described here in dimension two in the context of Theorem~\ref{thm:main} with the notation introduced therein.

Imposing that the parameters {\tt tel} and {\tt cid} introduced in Section~\ref{sec:PlanarTiling} should be even gua\-ran\-tees that the squares $\left\{\tau(s)\right\}_{s\in\Sigma}$ which, from~Proposition~\ref{proptau}, appear in the tiling $T'=\tau'(T)$, are centered at integer points in the plane (this follows from the definition of the coding $\tau'$ in~\eqref{deftau'} and of the construction rule~\eqref{eq:codedTiling}).

Consider the portion of the tiling lying in the region defined by the conditions
\begin{equation}\label{lateregion}
m^{\text{lo}}\le m\le m^{\text{hi}} \qquad \textrm{and}\qquad n^{\text{lo}}\le n\le n^{\text{hi}}
\end{equation}
and let there be a square $\mathcal{C}$ obtained as the image under $\tau$ of a tile in $\Sigma$. Note that from the injectivity of the coding $\tau$ (this is guaranteed by the algorithm in Figure~\ref{fig:algo2} --- see also Section~\ref{sec:tilingAlgo}), the underlying tile is uniquely determined by $\mathcal{C}$. This tile is then assigned a canonical value defined as
\begin{equation}\label{eq:indexing}
\Delta = {\rm min\ }{\tt dist}(m, n) ,
\end{equation}
where
$${\tt dist}(m, n) = |m| + |n| + \frac{m}{10b} + \frac{n}{10bc}$$
with
$$b = m^{\text{hi}} - m^{\text{lo}} \qquad \textrm{and} \qquad c = n^{\text{hi}} - n^{\text{lo}}, $$
and where the minimum in \eqref{eq:indexing} is taken over all centers $(m,n)$ of squares identical to $\mathcal{C}$ lying in the region~\eqref{lateregion}.

It is elementary to establish that $\Delta$ is independent of the region~\eqref{lateregion},
provided this region should be sufficiently large along each axis.
The values assigned in \eqref{eq:indexing} allow reordering the tiles according to their ``earliest'' appearance in $T$, first by least Manhattan (or $\ell_1$) distance from the origin, then by row $m$, and finally by column $n$ (utilising floating--point arithmetic for this purpose avoids
integer overflow for large tables). This ordering then defines the index of the tile in $\Sigma=\{1, \dots, 2353\}$.
 Figure \ref{[statseg]} shows an example.
\par

\subsection*{Code Design}
Primary design targets for the present program were robustness and portability,
rather than optimum deployment of computer time or space, leading to several
similar instances of deliberately inefficient consumption of resources.
One is the preliminary rectangular wall created and then pruned by the wall
builder in Section~\ref{sec:tileBuilder}.
In similar vein, the tile builder requires room for tiles involving the initial
sequence to form correctly, which tiles are created by padding the wall with rows from the top
sentinel window: in practice, the choice of a parameter $${\tt mlo} \le -\frac{5}{2}\cdot ({\tt cid} + {\tt tel})$$ proves adequate for this purpose.
Another non--optimal step is the naive machinery selecting coordinates $(m,n)$ of centers of encoded tiles
and converting those to Magma array addresses $(i,j)$ (which start from 1).
The ensuing time penalty remains negligible,
but the improved transparency is significant for maintainability, and
for establishing the extent to which the program satisfies its specification.
\par

Similar considerations prompt limitations on the Number Wall and tiling models
employed above, mostly motivated by maintaining symmetry, but easily relaxable
at the cost of some increase in complexity. As mentioned in Section~\ref{sec:numberWall},
using Hankel determinants instead of Toeplitz determinants in the definition of a Number Wall would introduce asymmetry between $m,n$ into the
Frame Theorem (Corollary~\ref{frameconstraints}), thereby complicating its statement and application.
Also, allowing odd tile edge length {\tt tel} or odd centre separation {\tt cid} (see Section~\ref{sec:PlanarTiling})
 would require processing half--integer coordinates for the centers of encoded tiles.
Note also that the Paper--Folding sequence has been extended to a doubly--infinite sequence in Section~\ref{strategy} in view of the following observation: one--sided sequences would introduce boundaries in their Number Wall, which would require special
consideration. In contrast, since the actual data objects involved are finite
segments of walls, such boundaries are in practice automatically
incorporated at the computational stage.
\par

One case where this minimalist philosophy has been abandoned concerns the canonical ordering of tiles as described in Section \ref{sec:canonical}. Assigning these required extra program to build a `mini--wall' of
tiles, as well as an extra stage to sort the output. But the enhanced tile
builder no longer needs time--expensive searches for tile encodings when building
the substitution $\varphi$ (cf.~Pass 2 in Figure \ref{fig:algo2}),
nor does it now require simultaneous access to the
entire Number Wall. The result is doubled speed and (in concert with a rolling
row--by--row wall builder) a potentially significant reduction in space,
besides the improvement in usability which constituted its primary motivation.
\par

\paragraph{}
\renewcommand{\abstractname}{Acknowledgements}
\begin{abstract}
EN wishes to thank Dong Han Kim for discussing Remark \ref{rem:normalIndices} and related matters with him, to Nishant Chandgotia for a discussion about Section \ref{sec:tilings}, and to Maynooth University for his two visits there during the time he was working on this project. EN's research was supported by EPSRC Programme Grant: EP/J018260/1. FA's research was supported by the same grant and by EPSRC Grant  EP/T021225/1. FA would like to thank Professors David Bressoud
and Jim Propp for correspondences which helped set this project on the right track.

The authors wish to thank the referees for their careful reading of the paper and for suggestions which helped improve its quality, and for investing time in experimenting with our computer program.\\

\emph{Le premier auteur tient \`a d\'edier ce travail \`a Christian Potier en reconnaissance de ses constants encouragements et de sa passion pour les math\'ematiques.}
\end{abstract}




\bibliographystyle{amsplain}

\providecommand{\bysame}{\leavevmode\hbox to3em{\hrulefill}\thinspace}
\providecommand{\MR}{\relax\ifhmode\unskip\space\fi MR }
\providecommand{\MRhref}[2]{%
  \href{http://www.ams.org/mathscinet-getitem?mr=#1}{#2}
}
\providecommand{\href}[2]{#2}

\bibliography{bibliography}

\providecommand{\bysame}{\leavevmode\hbox to3em{\hrulefill}\thinspace}
\providecommand{\MR}{\relax\ifhmode\unskip\space\fi MR }
\providecommand{\MRhref}[2]{%
  \href{http://www.ams.org/mathscinet-getitem?mr=#1}{#2}
}
\providecommand{\href}[2]{#2}
\begin{thebibliography}{10}

\bibitem{AB07}
Boris Adamczewski and Yann Bugeaud, \emph{On the {L}ittlewood conjecture in
  fields of power series}, Probability and number theory---{K}anazawa 2005,
  Adv. Stud. Pure Math., vol.~49, Math. Soc. Japan, Tokyo, 2007, pp.~1--20.

\bibitem{APWW98}
Jean-Paul Allouche, Jacques Peyri\`ere, Zhi~Xiong Wen, and Zhi~Ying Wen,
  \emph{Hankel determinants of the {T}hue-{M}orse sequence}, Ann. Inst. Fourier
  (Grenoble) \textbf{48} (1998), no.~1, 1--27.

\bibitem{AS03}
Jean-Paul Allouche and Jeffrey~O. Shallit, \emph{Automatic sequences},
  Cambridge University Press, Cambridge, 2003, Theory, applications,
  generalizations.

\bibitem{B}
Dmitry Badziahin, \emph{Continued fractions of certain {M}ahler functions},
  Acta Arith. \textbf{188} (2019), 53--81.

\bibitem{BBEK15}
Dzmitry Badziahin, Yann Bugeaud, Manfred Einsiedler, and Dmitry Kleinbock,
  \emph{On the complexity of a putative counterexample to the {$p$}-adic
  {L}ittlewood conjecture}, Compos. Math. \textbf{151} (2015), no.~9,
  1647--1662.

\bibitem{BZ14}
Dzmitry Badziahin and Evgeniy Zorin, \emph{Thue-{M}orse constant is not badly
  approximable}, Int. Math. Res. Not. IMRN (2015), no.~19, 9618--9637.

\bibitem{B64}
Alan Baker, \emph{On an analogue of {L}ittlewood's {D}iophantine approximation
  problem}, Michigan Math. J. \textbf{11} (1964), 247--250.

\bibitem{B14}
Yann Bugeaud, \emph{Around the {L}ittlewood conjecture in {D}iophantine
  approximation}, Num\'ero consacr\'e au trimestre ``{M}\'ethodes
  arithm\'etiques et applications'', automne 2013, Publ. Math. Besan\c{c}on
  Alg\`ebre Th\'eorie Nr., vol. 2014/1, Presses Univ. Franche-Comt\'e,
  Besan\c{c}on, 2014, pp.~5--18.

\bibitem{BM08}
Yann Bugeaud and Bernard de~Mathan, \emph{On a mixed {L}ittlewood conjecture in
  fields of power series}, Diophantine analysis and related fields---{DARF}
  2007/2008, AIP Conf. Proc., vol. 976, Amer. Inst. Phys., Melville, NY, 2008,
  pp.~19--30.

\bibitem{BH14}
Yann Bugeaud and Guo-Niu Han, \emph{A combinatorial proof of the non-vanishing
  of {H}ankel determinants of the {T}hue-{M}orse sequence}, Electron. J.
  Combin. \textbf{21} (2014), no.~3, Paper 3.26, 17.

\bibitem{BHWY}
Yann Bugeaud, Guo-Niu Han, Zhi-Ying Wen, and Jia-Yan Yao, \emph{Hankel
  determinants, {P}ad\'e approximations, and irrationality exponents}, IMRN
  (2016), no.~5, 1467--1496.

\bibitem{B67}
R.~T. Bumby, \emph{On the analog of {L}ittlewood's problem in power series
  fields}, Proc. Amer. Math. Soc. \textbf{18} (1967), 1125--1127.

\bibitem{C13}
Michael Coons, \emph{On the rational approximation of the sum of the
  reciprocals of the {F}ermat numbers}, Ramanujan J. \textbf{30} (2013), no.~1,
  39--65.

\bibitem{CV12}
Michael Coons and Paul Vrbik, \emph{An irrationality measure for regular
  paperfolding numbers}, J. Integer Seq. \textbf{15} (2012), no.~1, Article
  12.1.6, 10.

\bibitem{C67}
Thomas~W. Cusick, \emph{Littlewood's {D}iophantine approximation problem for
  series}, Proc. Amer. Math. Soc. \textbf{18} (1967), 920--924.

\bibitem{C71}
\bysame, \emph{Lower bound for a {D}iophantine approximation function},
  Monatsh. Math. \textbf{75} (1971), 398--401.

\bibitem{DL63}
Harold Davenport and Donald~John Lewis, \emph{An analogue of a problem of
  {L}ittlewood}, Michigan Math. J. \textbf{10} (1963), 157--160.

\bibitem{MT04}
Bernard de~Mathan and Olivier Teuli\'e, \emph{Probl\`emes diophantiens
  simultan\'es}, Monatsh. Math. \textbf{143} (2004), no.~3, 229--245.

\bibitem{ELM17}
Manfred Einsiedler, Lindenstrauss Elon, and Mohammadi Amir, \emph{Diagonal
  actions in positive characteristic}, Duke Math. J. \textbf{169} (2020),
  no.~1, 117--175.

\bibitem{EKL}
Manfred Einsiedler, Anatole Katok, and Elon Lindenstrauss, \emph{Invariant
  measures and the set of exceptions to {L}ittlewood's conjecture}, Ann. of
  Math. (2) \textbf{164} (2006), no.~2, 513--560.

\bibitem{EK07}
Manfred Einsiedler and Dmitry Kleinbock, \emph{Measure rigidity and {$p$}-adic
  {L}ittlewood-type problems}, Compos. Math. \textbf{143} (2007), no.~3,
  689--702.

\bibitem{G59}
Felix~Ruvimovich Gantmacher, \emph{The theory of matrices. {V}ol. 1}, AMS
  Chelsea Publishing, Providence, RI, 1998, Translated from the Russian by K.
  A. Hirsch, Reprint of the 1959 translation.

\bibitem{hu}
Yining Hu and Guoniu Wei-Han, \emph{On the automaticity of the hankel
  determinants of a family of automatic sequences}, Theor.~Comput.~Sci.
  \textbf{715} (2018), 154--164.

\bibitem{KTW99}
Teturo Kamae, Jun-ichi Tamura, and Zhi-Ying Wen, \emph{Hankel determinants for
  the {F}ibonacci word and {P}ad\'e approximation}, Acta Arith. \textbf{89}
  (1999), no.~2, 123--161.

\bibitem{KPS17}
Alexander Kemarsky, Fr\'ed\'eric Paulin, and Uri Shapira, \emph{Escape of mass
  in homogeneous dynamics in positive characteristic}, J. Mod. Dyn. \textbf{11}
  (2017), 369--407.

\bibitem{K91}
Takao Komatsu, \emph{Extension of {B}aker's analogue of {L}ittlewood's
  {D}iophantine approximation problem}, Kodai Math. J. \textbf{14} (1991),
  no.~3, 335--340.

\bibitem{annalee}
Anna Lee, \emph{\"uber einige {E}xtremalaufgaben bez\"uglich endlicher
  {K}\"orper}, Acta Math. Acad. Sci. Hungar. \textbf{13} (1962), 235--243.

\bibitem{L06}
Elon Lindenstrauss, \emph{Invariant measures and arithmetic quantum unique
  ergodicity}, Ann. of Math. (2) \textbf{163} (2006), no.~1, 165--219.
  \MR{2195133}

\bibitem{MLothaire}
M.~Lothaire, \emph{Applied combinatorics on words}, Encyclopedia of Mathematics
  and its Applications, vol. 105, Cambridge University Press, Cambridge, 2005,
  A collective work by Jean Berstel, Dominique Perrin, Maxime Crochemore, Eric
  Laporte, Mehryar Mohri, Nadia Pisanti, Marie-France Sagot, Gesine Reinert,
  Sophie Schbath, Michael Waterman, Philippe Jacquet, Wojciech Szpankowski,
  Dominique Poulalhon, Gilles Schaeffer, Roman Kolpakov, Gregory Koucherov,
  Jean-Paul Allouche and Val\'erie Berth\'e, With a preface by Berstel and
  Perrin.

\bibitem{L97}
W.~Fred Lunnon, \emph{Pagodas and sackcloth: Ternary sequences of considerable
  linear complexity},
  \url{http://citeseerx.ist.psu.edu/viewdoc/summary?doi=10.1.1.39.3122}, 1997.

\bibitem{L01}
\bysame, \emph{The number-wall algorithm: an {LFSR} cookbook}, J. Integer Seq.
  \textbf{4} (2001), no.~1, Article 01.1.1, 38.

\bibitem{L09}
\bysame, \emph{The {P}agoda sequence: a ramble through linear complexity,
  number walls, {D}0{L} sequences, finite state automata, and aperiodic
  tilings}, Proceedings {I}nternational {W}orkshop on {T}he {C}omplexity of
  {S}imple {P}rograms, Electron. Proc. Theor. Comput. Sci. (EPTCS), vol.~1,
  EPTCS, [place of publication not identified], 2009, pp.~130--148.

\bibitem{L17}
\bysame, \emph{{D}ragon wall tiling program and data},
  \url{https://github.com/FredLunnon/dragon_wall/}, 2017, Github3 Repository.

\bibitem{M-FvdP81}
Michel Mend\`es~France and Alfred~Jacobus van~der Poorten, \emph{Arithmetic and
  analytic properties of paper folding sequences}, Bull. Austral. Math. Soc.
  \textbf{24} (1981), no.~1, 123--131.

\bibitem{N18}
Erez Nesharim, \emph{$t$-adic littlewood in $f_q((\frac{1}{t}))$, number wall},
  \url{https://cocalc.com/share/2317d2a7-6976-4548-a618-2869c2de6c7d/Number%20wall.sagews?viewer=share},
  2018, CoCalc Collaborative Calculation in the Cloud.

\bibitem{Q09}
Martine Queff\'elec, \emph{An introduction to {L}ittlewood's conjecture},
  Dynamical systems and {D}iophantine approximation, S\'emin. Congr., vol.~19,
  Soc. Math. France, Paris, 2009, pp.~127--150.

\bibitem{oS87}
Olivier Salon, \emph{Suites automatiques \`a multi-indices et alg\'ebricit\'e},
  C. R. Acad. Sci. Paris S\'er. I Math. \textbf{305} (1987), no.~12, 501--504.

\bibitem{jS}
Jeffrey~O. Shallit, \emph{Remarks on inferring integer sequences},
  \url{https://cs.uwaterloo.ca/~shallit/Talks/infer.ps}.

\bibitem{ST12}
Klaus Sutnera and Sam Tetruashvili, \emph{Inferring automatic sequences},
  \url{http://www.cs.cmu.edu/~sutner/papers/auto-seq.pdf}, 2012.

\bibitem{dT04}
Dinesh~S. Thakur, \emph{Function field arithmetic}, World Scientific Publishing
  Co., Inc., River Edge, NJ, 2004.

\end{thebibliography}

\vspace{8mm}

\begin{minipage}{0.4\textwidth}
\textbf{Faustin Adiceam}\\
School of Mathematics\\
The University of Manchester\\
Alan Turing Building\\
Manchester, M13 9PL\\
United Kingdom\\
\url{faustin.adiceam@manchester.ac.uk}\\
\end{minipage}
\hspace{8ex}
\begin{minipage}{0.4\textwidth}
\textbf{Erez Nesharim}\\
Department of Mathematics\\
University of York\\
Heslington\\
York YO10 5DD\\
United Kingdom\\
\url{erez.nesharim@york.ac.uk}\\
\end{minipage}

\vspace{3mm}

\begin{minipage}{0.4\textwidth}
\textbf{Fred Lunnon}\\
Department of Computer Science\\
Maynooth University\\
Co.~Kildare\\
Ireland\\
\url{fred@cs.may.ie}\\
\end{minipage}

\end{document}